\newtheorem{theorem}{Theorem}[section]
\newtheorem{lemma}[theorem]{Lemma}
\newtheorem{remark}[theorem]{Remark}
\newtheorem{proposition}[theorem]{Proposition}
\newtheorem{definition}[theorem]{Definition}
\newtheorem{problem}[theorem]{Problem}
\newcommand{\dP}{dP$3$}
\title{Aztec Castles and the \protect\dP{}  Quiver}
\begin{document}

\author{Megan Leoni}
\address{Department of Mathematical Sciences, Durham University, Durham DH1 3LE, UK}
\email{megan.leoni@gmail.com}

\author{Gregg Musiker}
\address{School of Mathematics, University of Minnesota, Minneapolis, Minnesota 55455}
\email{musiker@math.umn.edu}

\author{Seth Neel}
\address{Department of Mathematics, Harvard University, Cambridge, MA}
\email{sethneel@college.harvard.edu}

\author{Paxton Turner}
\address{Department of Mathematics, Louisiana State University, Baton Rouge, Louisiana 70803}
\email{pturne7@tigers.lsu.edu}

\thanks{The authors were supported by NSF Grants DMS-\#1067183 and DMS-\#1148634.}

\subjclass[2000]{13F60, 05C30, 05C70}

\date{July 18, 2014}

\keywords{cluster algebras, combinatorics, graph theory, brane tilings}

\maketitle

\begin{abstract}
Bipartite, periodic, planar graphs known as \textit{brane tilings} can be associated to a large class of quivers. This paper will explore new algebraic properties of the well-studied del Pezzo $3$ quiver and geometric properties of its corresponding brane tiling. In particular, a factorization formula for the cluster variables arising from a large class of mutation sequences (called $\tau-$mutation sequences) is proven; this factorization also gives a recursion on the cluster variables produced by such sequences. We can realize these sequences as walks in a triangular lattice using a correspondence between the generators of the affine symmetric group $\tilde{A_2}$ and the mutations which generate $\tau-$mutation sequences. Using this bijection, we obtain explicit formulae for the cluster that corresponds to a specific alcove in the lattice. With this lattice visualization in mind, we then express each cluster variable produced in a $\tau$-mutation sequence as the sum of weighted perfect matchings of a new family of subgraphs of the dP$3$ brane tiling, which we call \textit{Aztec castles}. Our main result generalizes previous work on a certain mutation sequence on the dP$3$ quiver in \cite{zhang}, and forms part of the emerging story in combinatorics and theoretical high energy physics relating cluster variables to subgraphs of the associated brane tiling. 
\end{abstract}

\tableofcontents

\begin{section}{Introduction}
\label{sec:introduction}
\par{First introduced and pioneered by Fomin and Zelevinsky in \cite{FZ} to study total positivity and dual canonical bases in semisimple Lie groups, cluster algebras have found a wealth of applications in many branches of mathematics including combinatorics, tropical geometry \cite{tropgeo}, Teichmuller theory \cite{teich}, and representation theory \cite{rep}. Cluster algebras are classes of commutative rings whose generators, known as cluster variables, are partitioned into subsets known as clusters. An iterative process called seed mutation provides a link between clusters and can be used to recover the complete set of generators given an initial seed \cite{zel}.

Meanwhile, theoretical physics has made intriguing advancements into the study of doubly-periodic, bipartite, planar graphs, known as brane tilings. These appear physically in string theory through the intersections of NS$5$ and D$5$-branes which are dual to a configuration of D3-branes probing the singularity of a toric Calabi-Yau threefold \cite{brane_dimer}. Brane tilings are inherently connected to the geometry of the threefold as well as the ($3+1$) dimensional supersymmetric gauge field theory that lives on the worldvolume of the D3-brane, which can be represented by a directed graph known as a quiver. There is significant interest in giving combinatorial interpretations for cluster variables arising from such quivers, see for example \cite{MS1,Musik, SchiffLee}.  Recent research has related a single mutation sequence on this quiver to a class of subgraphs of its brane tiling known as Aztec dragons \cite{zhang,CY}. This paper generalizes the work in \cite{zhang} by relating an infinite class of mutation sequences to a new, broader class of subgraphs of the brane tiling which we call \textbf{Aztec castles}}. 

\begin{subsection}{Quivers and Brane Tilings}
\label{subsec:quivbrane}
\par{A \textbf{quiver} $Q$ is a directed finite graph with a set of vertices $V$ and a set of edges $E$ connecting them whose direction is denoted by an arrow. For our purposes $Q$ may have multiple edges connecting two vertices but may not contain any loops or $2-$cycles. We can relate a cluster algebra with initial seed $\{x_{1},x_{2},\ldots,x_{n}\}$ to $Q$ by associating a cluster variable $x_{i}$ to every vertex labeled $i$ in $Q$ where $|V| = n$. The cluster is the union of the cluster variables at each vertex.

\definition{\textbf{Quiver Mutation:} Mutating at a vertex $i$ in $Q$ is denoted by $\mu_{i}$ and corresponds to the following actions on the quiver:
\begin{itemize}
\item For every 2-path through $i$ (e.g. $j \rightarrow i \rightarrow k$), add an edge from $j$ to $k$.
\item Reverse the directions of the arrows incident to $i$
\item Delete any 2-cycles created from the previous two steps. 
\end{itemize}}}
\noindent When we mutate at a vertex $i$, the cluster variable at this vertex is updated and all other cluster variables remain unchanged \cite{FZ}. The action of $\mu_{i}$ on the cluster leads to the following binomial exchange relation: 
\begin{equation*}
\label{eq: exchange relation}
x'_{i}x_{i} = \prod_{i \rightarrow j \; \mathrm{in} \; Q}x_{j}^{a_{i \rightarrow j}} + \prod_{j \rightarrow i \; \mathrm{in} \; Q}x_{j}^{b_{j \rightarrow i}}
\end{equation*}
where $x_i'$ is the new cluster variable at vertex $i$, $a_{i \rightarrow j}$ denotes the number of edges from $i$ to $j$, and $b_{j \rightarrow i}$ denotes the number of edges from $j$ to $i$.

Quivers describing gauge theories on the worldvolume of D3-branes probing toric Calabi-Yau singularities are also intimately connected to another planar graph known as a \textbf{brane tiling} (or dimer model). Unfolding the quiver onto the plane while maintaining the same edge connections leads to an infinite planar graph. The dual of this graph is the brane tiling: a bipartite, doubly-periodic, planar graph. 

\par{
Our main object of study is the quiver $Q$ associated to the third del Pezzo surface (\textbf{dP3}), illustrated in Figure \ref{fig:quiv_brane} with its associated brane tiling  \cite{brane_dimer, hanany_polygons}.  In particular, we consider the quiver associated with the Calabi-Yau threefold complex cone over the third del Pezzo surface of degree $6$ ($\mathbb{CP}^2$ blown up at three points), known as the del Pezzo $3$ quiver (\textbf{dP3}).  We focus on one of the four possible toric phases of this quiver.  In particular, $Q$ is Model I as in Figure 27 of \cite{franco_eager} (or Model II as first described in the physics literature in \cite{feng}). 
Note that later on, we will refer to a hexagon in the brane tiling consisting of the quadrilaterals (given in clockwise order) $5-3-1-4-2-0$ as a \textbf{6-cycle}. }

We can describe in more detail the unfolding process for this specific quiver in the manner of \cite{JMZ} (a description for generic quivers can be found where the procedure was first introduced in \cite{brane_dimer}). Denote by $E_{ij}$ the boundary edge from vertex $i$ to $j$ in our labeling of $Q$.

\begin{figure}[H]
    \centering
    \scalebox{0.6}{\includegraphics[keepaspectratio=true, width=\textwidth]{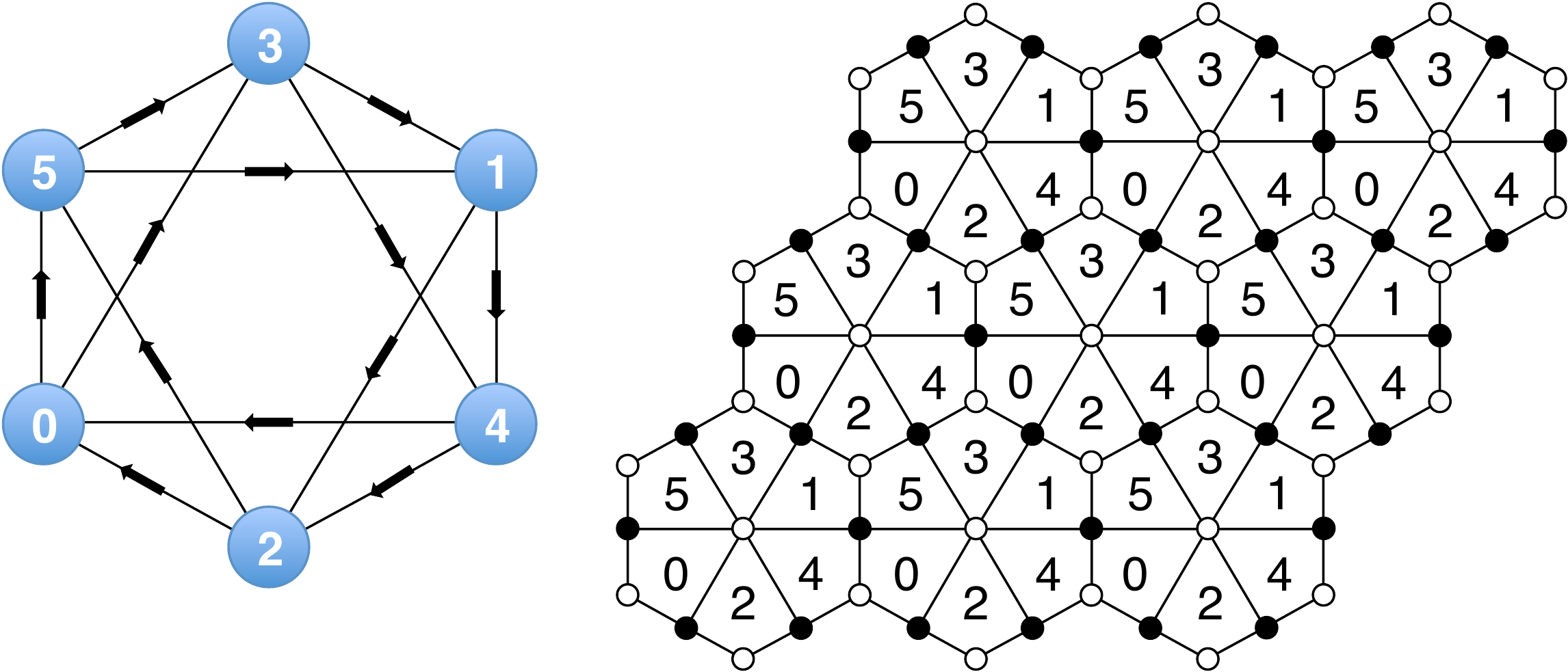}}
    \caption{\small \textbf{(Left)}: The dP3 quiver $Q$ and its associated brane tiling. \textbf{(Right)}: a doubly periodic, bipartite graph that is the planar dual to the unfolded quiver.}
    \label{fig:quiv_brane}
\end{figure}

 The dP$3$ brane tiling is associated to the following formal linear combination of closed cycles of the quiver, known as a \textbf{superpotential} (\textbf{$W$}) in a supersymmetric gauge theory where each edge in the unit cell of the brane tiling appears exactly twice, once for clockwise (positive) orientation and once for counter-clockwise (negative) orientation.
\begin{equation*}
\begin{split}
 W = E_{31}E_{14}E_{42}E_{20}E_{05}E_{53} + E_{34}E_{40}E_{03} + E_{12}E_{25}E_{51}\\
-E_{14}E_{40}E_{05}E_{51} - E_{34}E_{42}E_{25}E_{53} - E_{31}E_{12}E_{20}E_{03}
\end{split}
\end{equation*}
We can now unfold $Q$ into a planar digraph $\tilde{Q}$ composed of the given cycles such that the local configuration about vertex $i$ is the same in $\tilde{Q}$ and $Q$ , exhibited in Figure \ref{fig: unfolded}. The brane tiling illustrated in Figure \ref{fig:quiv_brane} is exactly the dual graph to $\tilde{Q}$, where bipartiteness is created by placing white vertices in positively oriented cycles and black vertices in negatively oriented cycles.

\begin{figure}[H]
    \centering
    \scalebox{0.9}{\includegraphics[keepaspectratio=true, width=85 mm]{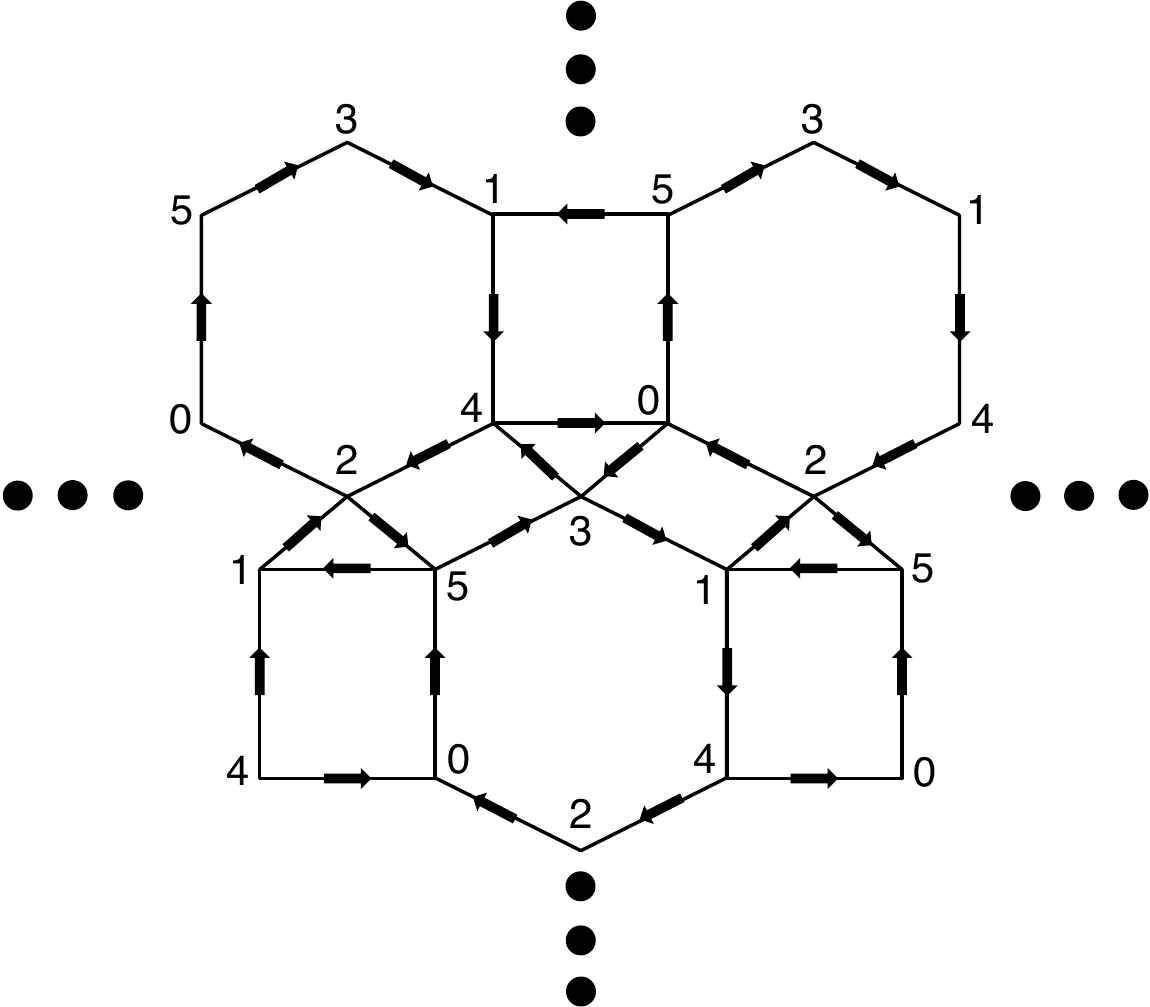}}
    \caption{\small A section of the unfolded dP3 quiver $\tilde{Q}$.}
    \label{fig: unfolded}
\end{figure}

\end{subsection}

\begin{subsection}{$\tau$-mutation Sequences}
\label{sec:tau}

Here we define a class of mutation sequences on $Q$ which we refer to as $\tau$-mutation sequences. The action of each $\tau$ mutates at antipodal vertices and has a symmetrical action on the quiver. This symmetry underlies the proofs of all of our main results.

\begin{definition}
\label{def:tau0}
Define the following pairs of mutations on $Q$.
\begin{itemize}
\centering
\item[] $\tau_{1}=\mu_{0} \circ \mu_{1}$
\item[] $\tau_{2}=\mu_{2} \circ \mu_{3}$
\item[] $\tau_{3}=\mu_{4} \circ \mu_{5}$
\end{itemize}

\end{definition}

Since antipodal vertices share no common edges, we observe that $\mu_{2i - 1}$ and $\mu_{2i - 2}$ commute for $i \in \{1,2,3\}$. 
Furthermore, the action of $\tau_{i}$ on the quiver exchanges the labels on vertices $2i-2$ and $2i-1$.  This motivates us to define 
the following actions on cluster seeds which are slight variants of the $\tau$-mutations.

\begin{definition}
\label{def:tau}
Define the following actions on $Q$.
\begin{itemize}
\centering
\item[] $\tau_{1}'=\mu_{0} \circ \mu_{1} \circ (01)$
\item[] $\tau_{2}'=\mu_{2} \circ \mu_{3} \circ (23)$
\item[] $\tau_{3}'=\mu_{4} \circ \mu_{5} \circ (45)$
\end{itemize}
where we apply a graph automorphism of $Q$ and permutation to the labeled seed after two mutations.

\end{definition}

One can then check that on the level of quivers and labeled seeds (i.e. ordered clusters), we have the following identities:
For all $i,j$ such that $1 \leq i \not = j \leq 3$
\begin{equation}
\label{eq: tau relations}
\begin{split}
 \tau_1'(Q) = \tau_2'(Q) = \tau_3'(Q) &= Q \\
(\tau_{i}')^{2} \{x_0,x_1,\dots, x_5\}&= \{x_0,x_1,\dots, x_5\} \\
(\tau_{i}'\tau_{j}')^{3} \{x_0,x_1,\dots, x_5\}&= \{x_0,x_1,\dots, x_5\}.
\end{split}
\end{equation}

A $\mathbf{\tau}$-\textbf{mutation sequence} is a mutation sequence of the form $\tau_{a_1} \tau_{a_2} \tau_{a_3} \ldots$. From now on we will abbreviate such a mutation sequence by its subscripts $a_1 a_2 a_3 \ldots$. The cluster variables produced after $\tau_{a_n} = \mu_{i} \circ \mu_{i+1}$ in such a sequence are denoted by $y_n$ and $y_n'$ where $y_n$ is the variable produced by $\mu_i$ and $y_n'$ is the variable produced by the latter mutation $\mu_{i+1}$ in $\tau_{a_n}$. By the symmetry of $Q$, $y_n$ and $y_n'$ are related by the action of the permutation $\sigma = (01)(23)(45)$ which flips antipodes, i.e. $\sigma y_n = y_n'$. It is also worth noting that $\sigma$ acts on the brane tiling as a $180^{\circ}$ rotation. 

\begin{remark} \label{rmk:taup} We will not use the $\tau_i'$'s elsewhere in this paper except in as far as to use these to identify $\tau$-mutation sequences and walks in the affine $\tilde{A}_2$ Coxeter lattice.  In particular, by using sequences of $\tau_i'$'s, we can identify labeled seeds, i.e. ordered clusters, with alcoves in the Coxeter lattice.  However since $\tau_i$'s and $\tau_i'$'s only differ up to simple permutations, we are still able to identify alcoves with unlabeled seeds, i.e. unordered clusters, which is all we need in this paper. 
\end{remark}

\end{subsection}

\begin{subsection}{Preliminary Definitions and Aztec Dragons}

We will adopt the weighting scheme on the brane tiling utilized in \cite{zhang}, \cite{speyer}, and \cite{GK} .  Associate the \textbf{weight} $\frac{1}{x_i x_j}$ to each edge bordering faces labeled $i$ and $j$ in the brane tiling. Let $M(G)$ denote the set of perfect matchings of a subgraph $G$ of the brane tiling. We define the weight $w(M)$ of a perfect matching $M$ in the usual manner as the product of the weights of the edges included in the matching under the weighting scheme. Then we define the weight of $G$ as
\[
w(G) = \sum_{M \in M(G)}w(M).
\]
We also define the \textbf{covering monomial}, $m(G)$, of an induced subgraph $G$ of the brane tiling as the monomial $x_0^{a_0}x_1^{a_1}x_2^{a_2}x_3^{a_3}x_4^{a_4}x_5^{a_5}$, where $a_j$ is the number of faces labeled $j$ enclosed in or bordering $G$\footnote{The covering monomial has a more general definition suitable for other contexts. See \cite{jeong} and \cite{JMZpaper}.}. Figure \ref{fig: cov mon} illustrates an example of the quadrilaterals included in the covering monomial of a small subgraph, outlined in red.

\begin{figure}[h]
    \centering
    \scalebox{0.75}{\includegraphics[keepaspectratio=true, width=50 mm]{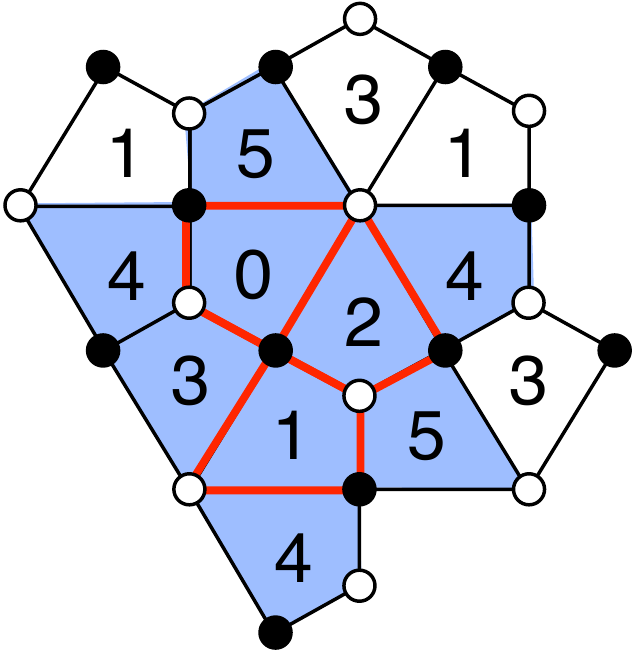}}
	\caption{\small The covering monomial of the subgraph outlined in red includes the blue quadrilaterals and is given by $x_{0}x_{1}x_{2}x_{3}x_{4}^{3}x_{5}^{2}$.} 
	\label{fig: cov mon}
\end{figure}

Finally, to make notation more concise in later proofs, it will be useful to define the product of the covering monomial and weight of a subgraph $G$ as 
\begin{equation*}
c(G) = w(G)m(G).
\end{equation*}

Zhang studied the mutation sequence $123123\ldots$ in \cite{zhang} and found that the cluster variable obtained after each mutation in the sequence can be expressed as the weighted perfect matchings of a family of subgraphs of the dP3 brane tiling. In fact these graphs turned out to be the family of \textbf{Aztec dragons} $\{D_{n/2}\}_{n \in \mathbb{N}}$ introduced and enumerated in \cite{CY}. The first few Aztec dragons are exhibited in Figure \ref{fig: zhang diamonds}. Specifically in the notation described above, Zhang proved that $y_n = c(D_{n/2})$ (respectively, $y_n' = c(\sigma D_{n/2})$). Zhang's findings have provided the starting point for our main contribution: relating the weighted perfect matchings of subgraphs of the dP3 brane tiling to cluster variables for all possible $\tau$-mutation sequences.

\begin{figure}[H]
    \centering
    \scalebox{0.85}{\includegraphics[keepaspectratio=true, width=120 mm]{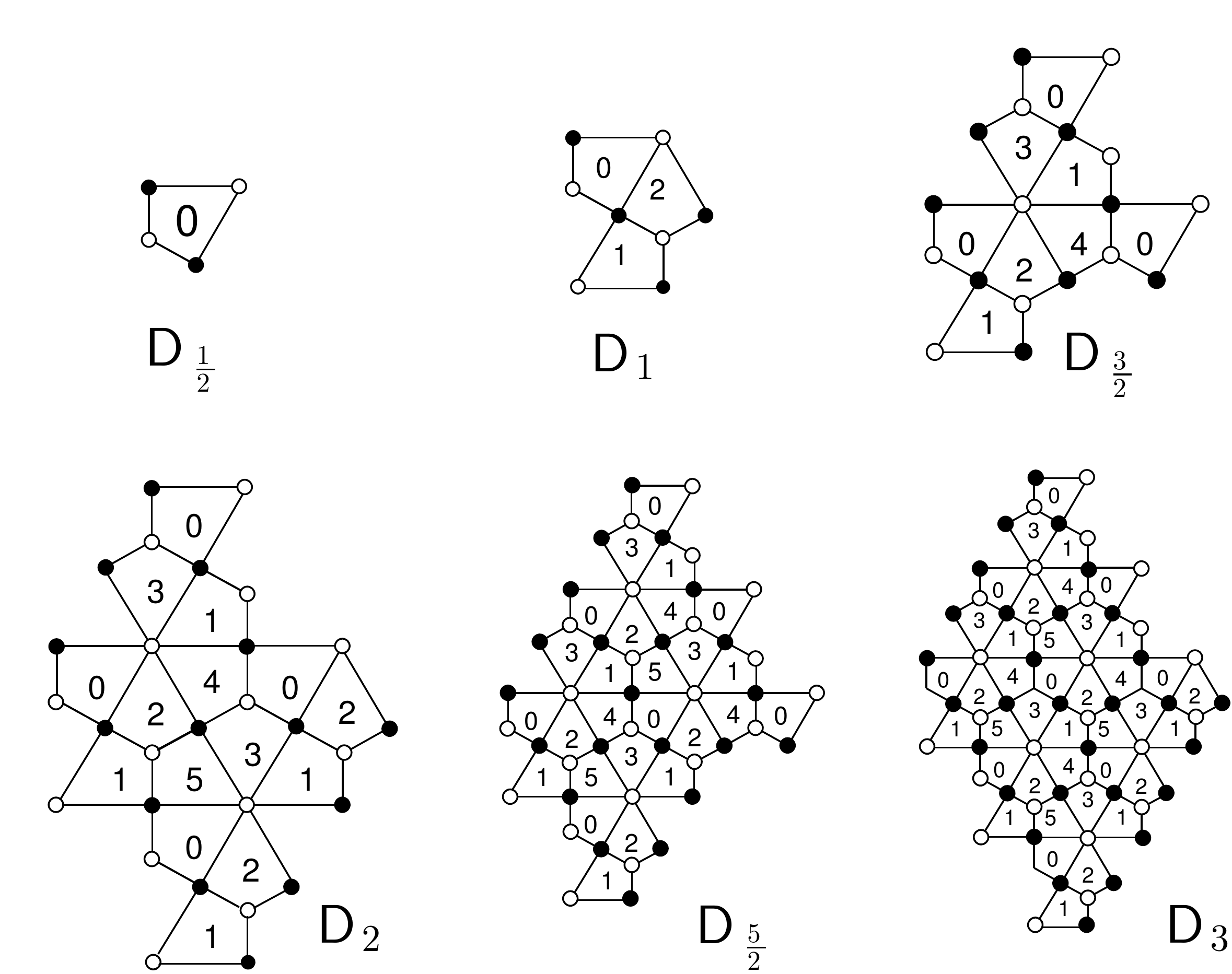}}
    \caption{\small Examples of the Aztec dragons $D_{n/2}$.}
	\label{fig: zhang diamonds}
\end{figure}

\end{subsection}

\begin{subsection}{Main Results}

We are now equipped to state our main theorem, which relates cluster variables produced by $\tau$-mutation sequences to a new family of subgraphs in the brane tiling. This family, which we call \textbf{Aztec castles}, will be defined later in Section \ref{sec:graphsintcone} and presents a generalization of the Aztec dragons introduced in \cite{CY} and studied in \cite{zhang}. 

\begin{theorem}[Main Theorem]
\label{thm:main}
For any $\tau-$mutation sequence on the dP3 quiver we can associate a subgraph $G$ of the brane tiling to each cluster variable  $y_n$ (respectively $y_n'$) produced such that 
\begin{equation*}
y_n = c(G)
\end{equation*} 
and respectively, $y_n' = c(\sigma G)$. 
\end{theorem}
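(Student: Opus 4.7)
The plan is to prove the theorem by induction on the length of the $\tau$-mutation sequence, using Zhang's theorem (which handles the sequence $123123\ldots$ and the Aztec dragons) as the base case and a graphical condensation identity in the style of Kuo for the inductive step. First I would use the identification between $\tau$-mutation sequences and walks in the affine $\tilde{A}_2$ Coxeter lattice (mentioned in Remark \ref{rmk:taup}) to index each cluster variable $y_n$ by the alcove reached after $n$ steps. For each such alcove I would define an Aztec castle $G$ as a subgraph of the dP3 brane tiling, constructed so that walking along $123123\ldots$ recovers Zhang's Aztec dragons $D_{n/2}$, and so that applying $\sigma = (01)(23)(45)$ (which acts as a $180^\circ$ rotation of the tiling) exchanges $G$ with the graph associated to the partner variable $y_n'$.

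For the inductive step, the binomial exchange relation coming from $\tau_{a_{n+1}} = \mu_i \circ \mu_{i+1}$ expresses $y_{n+1}$ and $y_{n+1}'$ as Laurent polynomials of the form $(M_1 + M_2)/y_{n-k}$, where $M_1, M_2$ are monomials in earlier cluster variables and $y_{n-k}$ is the cluster variable being replaced. On the brane tiling side I would establish a matching identity of the shape
\begin{equation*}
c(G_{n+1})\, c(G_{n-k}) \;=\; c(H_1)\,c(H_1') \;+\; c(H_2)\,c(H_2'),
\end{equation*}
where each $H_j, H_j'$ is an Aztec castle corresponding to an earlier alcove in the same lattice walk. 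Assuming such an identity, the inductive hypothesis immediately converts the right-hand side into $M_1 + M_2$, giving $y_{n+1} = c(G_{n+1})$. The identity itself is a Kuo condensation: choose four boundary vertices of $G_{n+1} \cup G_{n-k}$ whose removal splits the combined graph into the pairs $(H_1, H_1')$ and $(H_2, H_2')$, and apply the standard condensation theorem to the weighted dimer model.

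The hard part will be twofold. First, the Aztec castles must be defined uniformly enough that cutting along the appropriate boundary produces exactly the four smaller castles needed, for each of the three generators $\tau_1,\tau_2,\tau_3$ and for every direction of travel in the $\tilde{A}_2$ lattice; this will likely force a careful case analysis according to which wall of the alcove is being crossed, together with a verification that the combinatorial description of the boundary of an Aztec castle is invariant under the symmetries of $Q$. Second, Kuo condensation gives the matching identity for $w(\cdot)$, but we need it for $c(\cdot) = w(\cdot)\,m(\cdot)$, so one must check that the covering monomials on both sides agree, i.e.\ that the multisets of faces enclosed or bordered by $G_{n+1} \cup G_{n-k}$ and by $H_1 \cup H_1'$ (resp.\ $H_2 \cup H_2'$) coincide. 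This is a finite bookkeeping check per case, but it is where the geometry of the triangular Coxeter lattice and the hexagonal structure of the dP3 brane tiling have to align, and it is the step most likely to require the factorization/recursion formula advertised in the abstract as an intermediate tool.
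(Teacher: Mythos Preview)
Your overall strategy---define Aztec castles indexed by alcoves, prove a Kuo-type condensation identity on the weights, verify a matching identity on covering monomials, and then induct using the cluster exchange relation---is exactly what the paper does. The base case (Zhang's dragons), the shape of the inductive identity $c(G_{n+1})c(G_{n-k}) = c(H_1)c(H_1') + c(H_2)c(H_2')$, and the need to separately check the $m(\cdot)$ bookkeeping all match Lemmas~\ref{lem:genweight}, \ref{lem:int cov mon}, and the proof of Theorem~\ref{thm:intcone}.

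Where your plan differs is organizational, and it matters. You anticipate a ``careful case analysis according to which wall of the alcove is being crossed,'' i.e.\ one condensation per generator $\tau_1,\tau_2,\tau_3$ and per direction. The paper avoids this: it first uses quiver automorphisms $\alpha$ (Tables~\ref{tab:perms}--\ref{tab:perms2}, Lemma~\ref{cor:zhangperm}) to reduce the twelve regions of the Coxeter lattice to just two, the NE and SW cones, and within each cone it only ever walks along \emph{canonical paths} (diagonal steps along $123\ldots$ followed by horizontal steps). Consequently a single condensation relation (Lemma~\ref{lem:genweight}, and its dual Lemma~\ref{lem:genweight1}) suffices; the three-fold wall symmetry is absorbed into the permutation $\alpha$ acting on the brane tiling rather than into three separate graph decompositions. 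Your approach would work but would triple the casework, and the symmetry check you mention as a side condition is in fact the main labor-saving device.

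One smaller point: you suggest the factorization formula may be needed for the covering-monomial step. In the paper the covering-monomial identity (Lemma~\ref{lem:int cov mon}) is proved purely geometrically, by reading off the face multisets from the same Kuo decomposition used for the weights; the factorization phenomenon (Proposition~\ref{thm:factor}) enters only peripherally, in Lemma~\ref{lem:oddsigma}, to compare the penultimate cluster variables at adjacent even and odd alcoves.
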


In addition, in Section \ref{sec:factor}, we obtain:
\begin{itemize}
\item A factorization formula for cluster variables produced in $\tau$-mutation sequences on the dP3 quiver. This combined with the main result implies that all Aztec castles have $2^k$ perfect matchings. 
\item A recursion which enables efficient calculation of cluster-variables arising in any $\tau$-mutation sequence. 
\item Explicit formulae for the cluster variables produced by any $\tau$-mutation sequence. 
\end{itemize}

In the remainder of the introduction, we describe a construction for visualizing $\tau$-mutation sequences (Section \ref{subsec:cox}), and break the set of them into twelve types, up to symmetries (Section \ref{sec:coxsym}).  Sections \ref{sec:integercone} and \ref{sec:new-half-int} then describe the Aztec castles appearing from two of these types $\tau$-mutation sequences, corresponding to alcoves in the northeast cone and southwest cone, respectively, as defined below. The proof of our main theorem follows from the results in these sections (Theorems \ref{thm:intcone} and \ref{thm:halfintcone}) and a brief symmetry argument presented in Section \ref{sec:coxsym}.  Finally, in Section \ref{sec: conclusion}, we end with some open problems and directions for future research.

\end{subsection}

\begin{subsection}{Coxeter Lattice and Relations}
\label{subsec:cox}

We will now describe how we use the triangular lattice associated to the Coxeter group $\tilde{A_2}$, the \textbf{Coxeter lattice} \cite{coxlattice}, to allow for a clear visualization of $\tau$-mutation sequences.  This is possible because the relations observed amongst the variant $\tau_j'$'s (see Definition \ref{def:tau} and Equation \ref{eq: tau relations}) coincide with the relations amongst the generators of the affine symmetric group $\tilde{A_2}$. This is explained in Remarks \ref{rmk:taup} and \ref{rmk:tau} and also follows as an easy corollary of our main theorem. Then we can view any mutation sequence (word in the $\tau_j$'s), as a gallery of alcoves (faces in the triangular lattice) associated to $\tilde{A_2}$. Hence there is a bijection between clusters produced by $\tau$-mutation sequences and alcoves in the lattice.  

Each triangle in the lattice has edges labeled with a permutation of $\{1,2,3\}$. We fix the central shaded (purple) triangle as the origin and label its edges starting from the top going clockwise as $1,2,3$. Then we label the rest of the edges in the lattice according to the following rule: given a labeled triangle $T$, we label any triangle adjacent to $T$ by reflecting the labels over the shared edge. This gives a well-defined labeling on the entire lattice after choosing an initial labeling of the origin. Beginning at the origin (initial cluster), mutation by $\tau_i$ corresponds to moving to the adjacent alcove that shares the edge labeled $i$ with the origin. Thus, a $\tau$-mutation sequence corresponds to a walk in the lattice.

We divide this lattice into twelve regions, noting that some adjacent regions contain an overlap.  Two of these regions, which we call the \textbf{northeast} and \textbf{southwest cones}, by abuse of notation, are described more explicitly below, while the remaining four regions are rotations of the NE and SW. 

We place coordinates on these regions in terms of what we call canonical paths. These constructions will be pivotal in later sections.  Figure \ref{fig:oddeven} exhibits the lattice, with certain significant marked mutation sequences drawn as paths that split the lattice into the twelve regions.

The region labeled I (light blue)\footnote{If viewing this paper in black and white, light blue should appear white, orange should appear grey, and purple should appear dark grey.}, which we dub the \textbf{NE cone}, consists of:

\begin{enumerate}
\item All alcoves intersected by the path $123123\ldots$ after an even number of steps, including the original alcove containing the initial cluster.
\item All alcoves intersected by the path $12131213\ldots$ strictly after its second step.
\item All alcoves between those listed in the previous two bullets. 
\end{enumerate}

\noindent The region labeled VII (orange), which we dub the \textbf{SW cone}, is defined similarly to include:

\begin{enumerate}
\item All alcoves intersected by the path $321321\ldots$ after an odd number of steps.
\item All alcoves intersected by the path $32123212\ldots$ strictly after its second step.
\item All alcoves between those listed in the previous two bullets.
\end{enumerate}

The remaining regions are defined as rotations and reflections of these two cones, as according to Figure \ref{fig:oddeven} which illustrates the decomposition.   
For instance, region I overlaps with region II along certain alcoves in the path $12131213\dots$. Similarly, region $2k+1$ overlaps region $2k+2$ for $k \in \{0,1,2,3,4,5\}$ along the path $\theta(12131213\dots)$ for a permutation $\theta: \{1,2,3\}\to \{1,2,3\}$. By using symmetries of the brane tiling and Coxeter lattice, it suffices to prove our theorem in regions I (NE cone) and VII (SW cone). This is justified below in Section \ref{sec:coxsym}.

\begin{figure}[h]
    \centering
    \includegraphics[keepaspectratio=true, width=170mm]{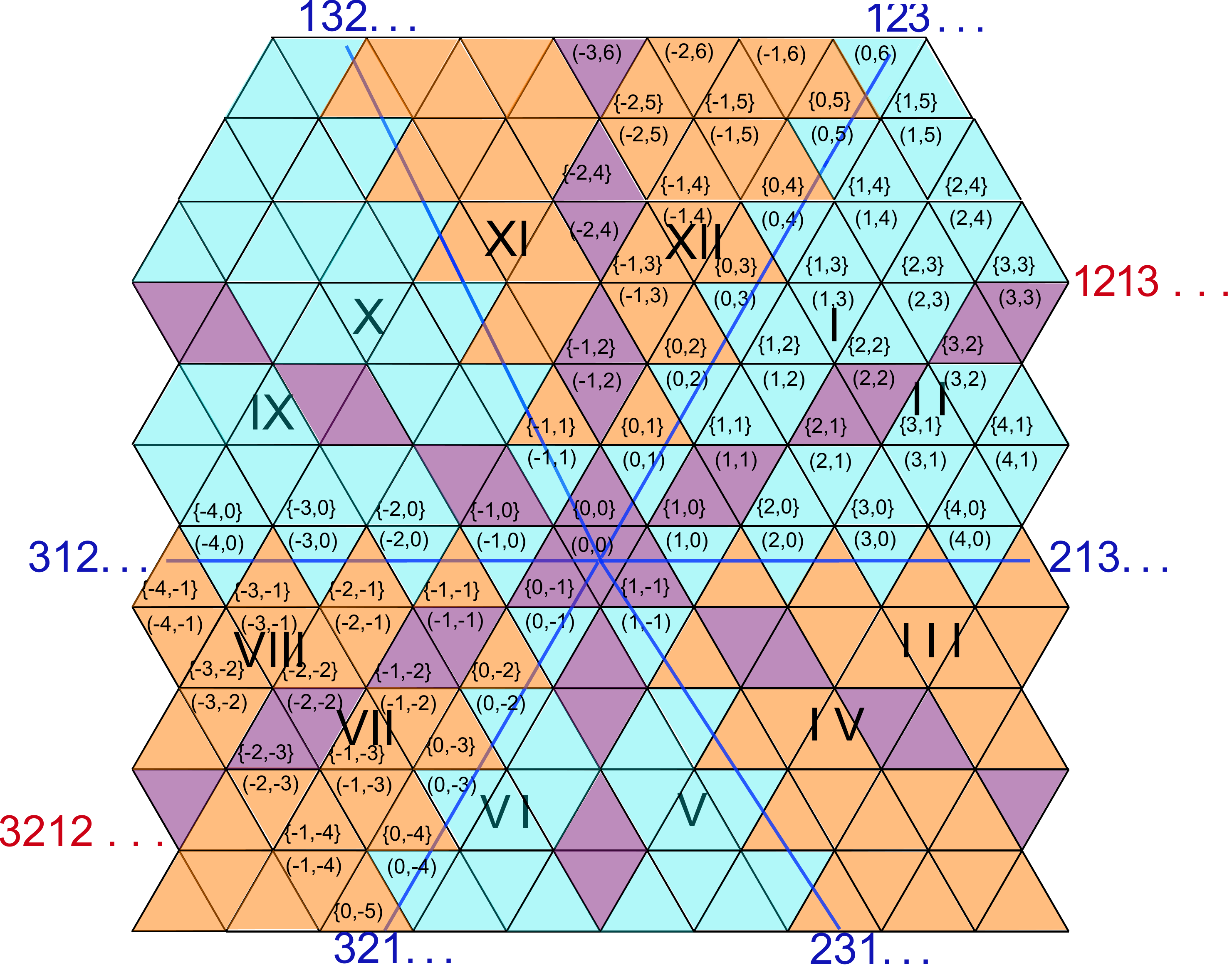}
	\caption{\small The 12 regions with alcoves labeled in the NE cone (region I), SW cone (region VII), and neighboring alcoves (e.g. in regions II, VIII, and XII).  Rotations of the NE cone (resp. SW cone) are in light blue (resp. orange).  The overlaps between I/II, III/IV, V/VI, VII/VIII, IX/X, and XI /XII, are illustrated in purple. The edges of the central hexagon $(0,0)$ are labeled starting at the top and going clockwise as 1,2,3.}
	\label{fig:oddeven}
\end{figure}

We now put a coordinate system on the alcoves of the above lattice. A \textbf{canonical path} $\mathcal{P}$ is the concatenation of two components: (1) first, a walk $\mathcal{P}_1$ along the path $(...123123...)$ in either direction (up or down) followed by (2) a horizontal component $\mathcal{P}_2$ an even number of steps in either direction (left or right). Consider an alcove $\mathcal{A}$ reached by the canonical path $\mathcal{P} = \mathcal{P}_1 \mathcal{P}_2$. Up to sign, $\mathcal{A}$ has coordinates $(i,j)$ (resp. $\{i,j\}$) if $\mathcal{P}_1$ crosses $|2j|$ (resp. $|2j-1|$) edges and $\mathcal{P}_2$ crosses $|2i|$ edges total. Signs are determined as follows: $\mathcal{P}_1$ goes up if and only if $2j \geq 0$ (resp. $2j-1 \geq 0$), $\mathcal{P}_2$ goes right if and only if $i \geq 0$. We call the alcoves reached by this process with an even (resp. odd) number of steps up or down \textbf{even alcoves} (resp. \textbf{odd alcoves}).  Note that the odd alcoves are labeled the same as the even alcoves directly south of them, except for changing $(,)$ to $\{,\}$. Also, the even alcoves are oriented the same way as the origin while odd alcoves are upside-down.  

At some height $j$ even alcove (or, equivalently, some height $j-1$ odd alcove), a canonical path may turn left or right. This turning point is indicated by a vertical bar $|$.  Let $\beta = 123$, and define $\beta^{-1} = 321$.  In this notation, the canonical paths in the NE cone ($i,j\geq 0$) have the following form based on the value of $j \mod 3$.

\begin{enumerate}
\item $\beta^{\frac{2}{3}j}|(213213\ldots)$ if $j \equiv 0 \mod 3$
\item $\beta^{\frac{2}{3}(j-1)}12|(132132\ldots)$ if $j \equiv 1 \mod 3$
\item $\beta^{\frac{1}{3}(2j-1)} 1|(321321\ldots)$ if $j \equiv 2 \mod 3$.
\end{enumerate}

\noindent On the other hand, in the SW cone, we get these same paths reflected through the origin. In this case, $i,j \leq 0$, and we get the following canonical paths.

\begin{enumerate}
\item $\beta^{\frac{2}{3}j}|(312312\ldots)$ if $j \equiv 0 \mod 3$
\item $\beta^{\frac{2}{3}(j+1)}32|(123123\ldots)$ if $j \equiv -1 \mod 3$
\item $\beta^{\frac{1}{3}(2j + 1)}3|(231231\ldots)$ if $j \equiv -2 \mod 3$ 
\end{enumerate}  

\begin{figure}[h]
    \centering
    \includegraphics[keepaspectratio=true, width=80mm]{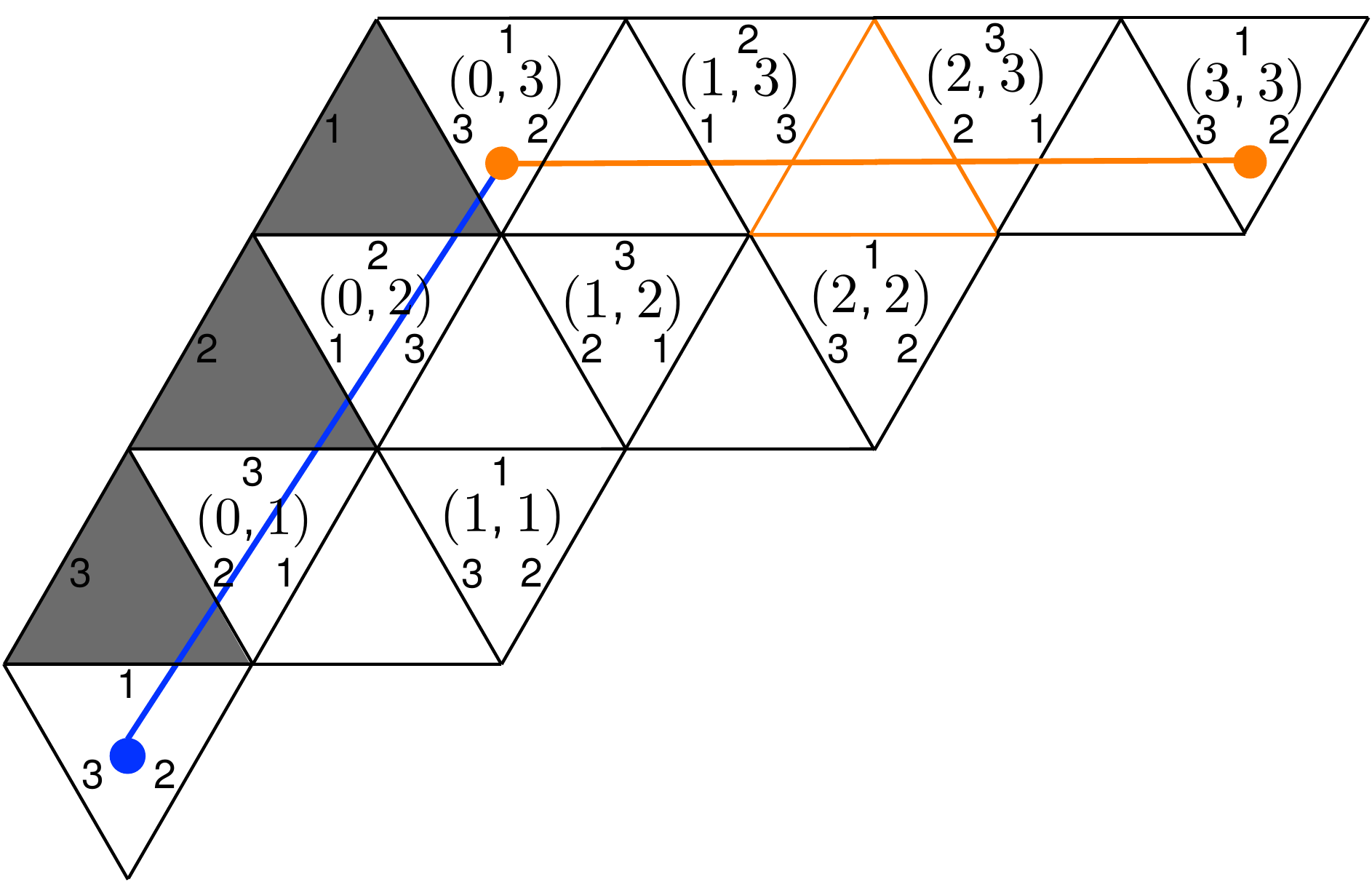} \hspace{1em}
    \includegraphics[keepaspectratio=true, width=40mm]{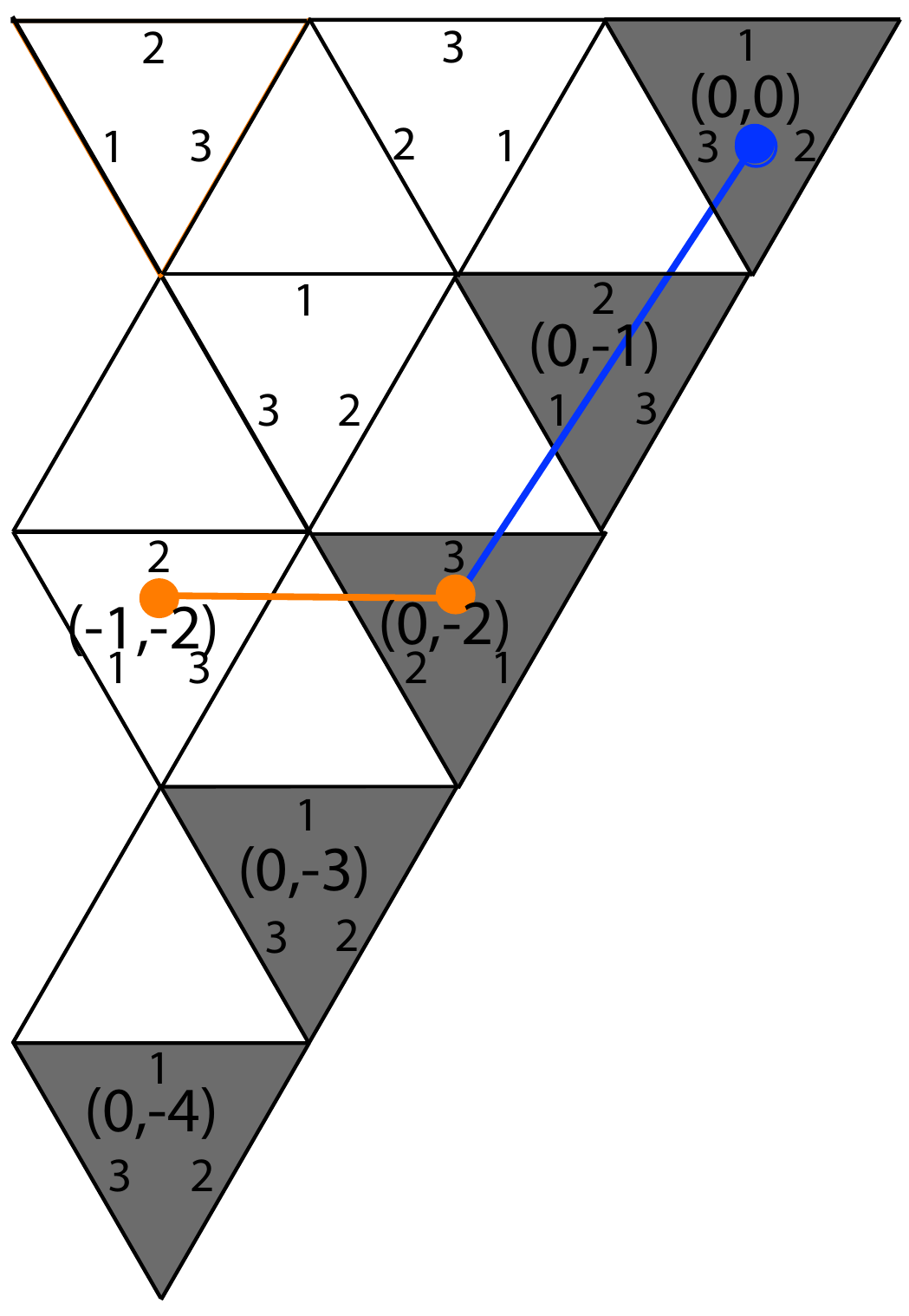}
	\caption{\small \textbf{(Left)}: The canonical path to the even alcove $(3,3)$. The blue path corresponds to the mutation sequence $\{\tau_1,\tau_2,\tau_3, \ldots\}$ and the orange horizontal path corresponds to $\{\tau_2,\tau_1,\tau_3 \dots\}$ and leads to the desired alcove.  Only the even alcoves $(i,j)$ of the NE cone (region I) are labeled in this picture. \textbf{(Right)}: The canonical path to the even alcove labeled $(-1,-2)$ in the SW cone (region VII).}
	\label{fig:canonicalpaths}
\end{figure}

\end{subsection}

\begin{subsection}{Symmetries of the Labeled Coxeter Lattice} \label{sec:coxsym}

Note that any graph isomorphism $\alpha$ of $Q$ which puts $Q$ in the form of Figure \ref{fig:quiv_brane} (up to switching directions of all the arrows) relabels vertices and hence acts naturally on mutations: $\alpha(\mu_j) = \mu_{\alpha(j)}$. Moreover, $\alpha$ acts on a cluster variable $Y$ written as a Laurent polynomial $F$ in terms of the initial seed $\{x_0, x_1, \ldots, x_5\}$ by $\alpha(Y) = F( x_{\alpha(0)}, x_{\alpha(1)}, \ldots, x_{\alpha(5)})$. Therefore, we let $\alpha$ act on an ordered cluster $\{X_0, X_1, \ldots, X_4, X_5\}$ as follows.

\begin{equation*}
\alpha \{X_0, X_1, \ldots, X_5\} = \{\alpha(X_{\alpha^{-1}(0)}), \alpha(X_{\alpha^{-1}(1)}), \ldots, \alpha(X_{\alpha^{-1}(5)})\}
\end{equation*}

\noindent In particular, $\alpha$ both changes the cluster variables and
permutes their order.

Given an alcove $\mathcal{A}$ of the lattice, we can construct a path $P$ to $\mathcal{A}$ of the form $P = \theta(P')$ where $P'$ is a canonical path in the NE or SW cone (depending on which region $\mathcal{A}$ lies in) and $\theta$ is a permutation on $\{\tau_1, \tau_2, \tau_3 \}$ that acts here on each step of $P'$. Inspection shows that for any $\theta$, there exists $\alpha$ such that $\alpha(\tau_1 \tau_2 \tau_3) = \theta(123\ldots)$. This correspondence is described in Tables \ref{tab:perms} and \ref{tab:perms2} below. 

It follows that applying $\alpha$ to the penultimate cluster variable produced at the end of the path $\theta^{-1}(P)$ yields the penultimate cluster variable produced at the end of P, possibly up to an action of $\sigma$. These penultimate cluster variables produced are not necessarily identical because the permutations $\alpha$ switch the order of some of the $\mu_i$'s. However, applying $\alpha$ with the action defined above to the ordered seed produced by $\theta^{-1}(P)$, for $\alpha$'s as listed in Table 2 (as the reader can verify for the five cases), yields the same ordered seed as that of $P$. In addition, the permutation $\theta$ sends the ordered pair $(I, VII)$ to $(R_1, R_2)$ for some antipodal regions $R_1, R_2$ in the Coxeter lattice. This implies the following Lemma that is integral to our main result.

\begin{table}[ht]

\caption{Permutations $\theta$ of $123\ldots$ and Corresponding Action on Figure \ref{fig:oddeven}} 
\centering
\begin{tabular}{ c c c c c }
\hline\hline
Region & $\theta$ & Action on Figure \ref{fig:oddeven} & $(i,j)$ sent to & $\{i,j\}$ sent to  \\ [0.5ex]
\hline
$(II, VIII)$ & (12) & {\footnotesize reflection over a line of slope $\frac{\sqrt{3}}{3}$} & $(j,i)$ & $\{j+1,i-1\}$ \\
$(IX,III)$ & (132)  & {\footnotesize $120^\circ$ rotation counter-clockwise} & $(-i-j,i)$  & $\{-i-j, i-1\}$ \\
$(X, IV)$ & (23) & {\footnotesize reflection over a vertical line} & $(-i-j, j)$ & $\{-i-j, j\}$  \\
$(V, XI)$ & (123) & {\footnotesize $120^\circ$ rotation clockwise} & $(j, -i-j)$ & $\{j+1,-i-j-1\}$ \\ 
$(VI, XII)$ & (13) & {\footnotesize reflection over a line of slope $-\sqrt{3}$} & $(i,-i-j)$ & $\{i,-i-j-1\}$  \\ [1ex] 
\hline 
\end{tabular}
\label{tab:perms} 

\end{table} 

\begin{table}[ht]

\caption{Corresponding $\alpha$ and Action on Brane Tiling} 
\centering
\begin{tabular}{ c c c c }
\hline\hline
Region & $\theta$ & $\alpha$ & Action on the brane tiling \\ [0.5ex]
\hline
$(II, VIII)$ & (12) & $(02)(13)(45)$ & {\footnotesize reflection over a line of slope $\sqrt{3}$} \\
$(IX,III)$ & (132)  & $(053142)$  & {\footnotesize $60 ^{\circ}$ rotation clockwise} \\
$(X, IV)$ & (23) & $(01)(24)(35)$ & {\footnotesize reflection over a line of slope $-\sqrt{3}$}  \\
$(V, XI)$ & (123) & $(034)(125)$ & {\footnotesize $120 ^{\circ}$ rotation clockwise} \\ 
$(VI, XII)$ & (13) & $(04)(15)$  & {\footnotesize reflection over a vertical line} \\ [1ex] 
\hline 
\end{tabular}
\label{tab:perms2} 

\end{table}

\begin{lemma}
\label{cor:zhangperm}

If we can write the cluster variables produced as $y_n = c(G)$ (respectively, $y_n' = c(\sigma G)$) for some subgraph $G$ of the brane tiling for all canonical paths in the Northeast and Southwest cones, then we may do the same for any path in the remaining regions by applying the appropriate permutation $\alpha$ or $\sigma \alpha$ to $G$. 
\end{lemma}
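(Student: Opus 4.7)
The plan is to transport the hypothesized matchings-to-cluster-variables formulas from the NE and SW cones to all other regions using the geometric symmetries of $Q$ catalogued in Tables~\ref{tab:perms} and~\ref{tab:perms2}. Given an alcove $\mathcal{A}$ in some region $R \notin \{I,VII\}$, let $\theta$ be the permutation of $\{1,2,3\}$ associated to the antipodal pair of regions containing $R$, and let $\alpha$ be the corresponding graph automorphism of $Q$. By construction, any path $P$ from the origin to $\mathcal{A}$ can be written as $P = \theta(P')$, where $P'$ is a canonical path in the NE or SW cone, with $\theta$ acting step-by-step on the subscripts of the $\tau_i$'s.

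First I would verify that $\alpha$ conjugates the mutation sequence $P'$ into $P$ at the level of labeled seeds, so that each cluster variable $Y$ produced along $P'$ is mapped by the variable substitution $x_j \mapsto x_{\alpha(j)}$ to the corresponding cluster variable along $P$, possibly post-composed with the antipodal swap $\sigma$. This rests on the identity $\alpha(\mu_j) = \mu_{\alpha(j)}$ together with the explicit $\theta \leftrightarrow \alpha$ dictionary in Table~\ref{tab:perms2}, and is essentially recorded in the paragraph preceding the lemma. Then I would check that this same algebraic substitution is intertwined with the geometric action of $\alpha$ on the brane tiling listed in the last column of Table~\ref{tab:perms2}: because edge weights $\tfrac{1}{x_i x_j}$ and covering-monomial exponents are determined solely by the labels of adjacent faces, $\alpha$ sends any subgraph $G$ bijectively to a subgraph $\alpha(G)$ with $w(\alpha(G)) = \alpha(w(G))$ and $m(\alpha(G)) = \alpha(m(G))$, hence $c(\alpha(G)) = \alpha(c(G))$.

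Combining the two steps, if $y_n = c(G)$ and $y_n' = c(\sigma G)$ hold along $P'$ by hypothesis, then the cluster variables produced along $P$ are $c(\alpha(G))$ and $c(\sigma \alpha(G))$, with the roles of the two possibly exchanged. The main obstacle is precisely this last bookkeeping step: one must decide for each $\theta$ whether the extra factor of $\sigma$ attaches to $y_n$ or to $y_n'$, which amounts to tracking whether $\theta$ swaps the two mutations composing the final $\tau$ in the sequence. Since there are only five nontrivial rows in Tables~\ref{tab:perms} and~\ref{tab:perms2}, this reduces to a finite case check, and the lemma then follows directly.
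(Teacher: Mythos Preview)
Your proposal is correct and follows essentially the same route as the paper: the lemma is really a corollary of the discussion immediately preceding it in Section~\ref{sec:coxsym}, where one writes $P = \theta(P')$, matches $\theta$ with a quiver automorphism $\alpha$ via Tables~\ref{tab:perms} and~\ref{tab:perms2}, and then observes that $\alpha$ carries the labeled seed along $P'$ to that along $P$ up to a possible $\sigma$, with the ambiguity resolved by a five-case check. Your write-up is in fact slightly more explicit than the paper's, since you spell out the compatibility $c(\alpha(G)) = \alpha(c(G))$ via the face-label dependence of edge weights and covering monomials---a step the paper takes for granted but which is exactly what is needed to pass from the algebraic action of $\alpha$ on Laurent polynomials to its geometric action on subgraphs of the brane tiling.
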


\end{subsection}
\end{section}

\begin{section}{Explicit Formulae for Cluster Variables}
\label{sec:factor}

The main result of this section is an explicit formula for the cluster corresponding to any alcove of the Coxeter lattice described in Section \ref{sec:introduction} (and thus for any $\tau$-mutation sequence).   

\begin{theorem}
\label{thm:alcove}

Let $A = \frac{x_2x_4+x_3x_5}{x_0x_1}, B = \frac{x_1x_4+x_0x_5}{x_2x_3}, C = \frac{x_0x_2+x_1x_3}{x_4x_5}.$ Let $\mathcal{C}_{(i,j)}$ (resp. $\mathcal{C}_{\{i,j)\}}$) be the ordered cluster  corresponding to the alcove $(i,j)$ (resp. $\{i,j\}$).  
By ordered cluster, we mean that the $k$th entry of the cluster is the variable at vertex $k$. For brevity, we represent $\mathcal{C}_{(i,j)}$ and $\mathcal{C}_{\{i,j\}}$ in the form $\{v_0, v_2, v_4\}$ where $v_{i}$ is the variable at vertex $i$ (recall that $v_{i+1} = \sigma(v_{i})$ for $i = 0,2,4)$. Also, to clean up the presentation, let $E_{\{A,B,C\}}[f,g,h] = A^f B^g C^h$. Then $\mathcal{C}_{(\frac{i}{2},j)} =$

\begin{itemize}

\item $\{x_0 E_{\{A,B,C\}}[\frac{i^2}{12}+\frac{i j}{6}+\frac{j^2}{3}, ~\hspace{5em}~ \frac{i}{6}+\frac{i^2}{12}-\frac{j}{3}+\frac{i j}{6}+\frac{j^2}{3}, ~\hspace{5em}~ -\frac{i}{6}+\frac{i^2}{12}-\frac{2 j}{3}+\frac{i j}{6}+\frac{j^2}{3}], \\ x_2 E_{\{A,B,C\}} [-\frac{i}{3}+\frac{i^2}{12}-\frac{j}{3}+\frac{i j}{6}+\frac{j^2}{3},~\hspace{5em}~ \frac{i^2}{12}+\frac{i j}{6}+\frac{j^2}{3}, ~\hspace{5em}~ -\frac{i}{3}+\frac{i^2}{12}-\frac{j}{3}+\frac{i j}{6}+\frac{j^2}{3} ], \\ x_4 E_{\{A,B,C\}}[\frac{i^2}{12}+\frac{i j}{6}+\frac{j^2}{3}, ~\hspace{5em}~ \frac{i}{3}+\frac{i^2}{12}+\frac{j}{3}+\frac{i j}{6}+\frac{j^2}{3}, ~\hspace{5em}~ \frac{i^2}{12}+\frac{i j}{6}+\frac{j^2}{3} ]\}$ \\ if $|i| = 0 \mod 6$, $|j| = 0 \mod 3$;
   
\item $\{x_0 E_{\{A,B,C\}}[\frac{1}{12}(i^2+2 i (-1+j)+4 (1+j+j^2)), ~\hspace{5em}~  \frac{1}{12} (-4+i^2+2 i j+4 j^2), \\ \frac{1}{12} (i^2+2 i (-2+j)+4 (-1+j) j) ], \\
   x_{2} E_{\{A,B,C\}}[\frac{1}{12} (i^2+2 i (1+j)+4 j (2+j)), ~ ~  \frac{1}{12} (i^2+2 i (2+j)+4 (1+j+j^2)), \\ \frac{1}{12} (-4+i^2+2 i j+4 j^2)], \\
   x_{4} E_{\{A,B,C\}}[\frac{1}{12} (8+i^2+2 i j+4 j^2)-1,  ~\hspace{5em}~ \frac{1}{12} (i^2+4 (-1+j) j+2 i (1+j)),  \\ \frac{1}{12} (i^2+2 i (-1+j)+4 (-1+j)^2) ] \}$ \\ if $|i| = 0 \mod 6, |j| = 1 \mod 3$;
      
\item $\{ x_0 E_{\{A,B,C\}}[\frac{1}{12} \left(i^2+2 i (1+j)+4 (1+j)^2\right),  ~\hspace{5em}~ \frac{1}{12} \left(i^2+4 j (1+j)+2 i (2+j)\right), \\ \frac{1}{12} \left(-4+i^2+2 i j+4 j^2\right) ], \\
    x_2 E_{\{A,B,C\}}[\frac{1}{12} \left(-4+i^2+2 i j+4 j^2\right), ~\hspace{5em}~ \frac{1}{12} \left(i^2+2 i (1+j)+4 \left(1-j+j^2\right)\right), \\ \frac{1}{12} \left(i^2+2 i (-1+j)+4 (-2+j) j\right) ], \\
    x_4 E_{\{A,B,C\}}[\frac{1}{12} \left(i^2+2 i (-1+j)+4 j (1+j)\right),  ~\hspace{5em}~ 1/12 (-4 + i^2 + 2 i j + 4 j^2), \\ \frac{1}{12} \left(i^2+2 i (-2+j)+4 \left(1-j+j^2\right)\right) ] \} $ \\ if $|i| = 0 \mod 6, |j| = 2 \mod 3$;
   
\end{itemize}

and $\mathcal{C}_{\{\frac{i}{2} + \frac{1}{2},j-1\}} =$

\begin{itemize}

\item $\{x_1 E_{\{A,B,C\}}[\frac{1}{4}+\frac{i^2}{12}+\frac{j}{2}+\frac{i j}{6}+\frac{j^2}{3},  ~\hspace{5em}~ -\frac{1}{4}+\frac{i}{6}+\frac{i^2}{12}+\frac{j}{6}+\frac{i j}{6}+\frac{j^2}{3},  ~\hspace{5em}~ -\frac{1}{4}-\frac{i}{6}+\frac{i^2}{12}-\frac{j}{6}+\frac{i j}{6}+\frac{j^2}{3}], \\
   x_3 E_{\{A,B,C\}}[-\frac{1}{4}-\frac{i}{6}+\frac{i^2}{12}-\frac{j}{6}+\frac{i j}{6}+\frac{j^2}{3},  ~\hspace{5em}~ \frac{1}{4}+\frac{i^2}{12}-\frac{j}{2}+\frac{i j}{6}+\frac{j^2}{3},  ~\hspace{5em}~ \frac{1}{4}-\frac{i}{3}+\frac{i^2}{12}-\frac{5 j}{6}+\frac{i j}{6}+\frac{j^2}{3} ], \\
   x_5 E_{\{A,B,C\}}[-\frac{1}{4}+\frac{i}{6}+\frac{i^2}{12}+\frac{j}{6}+\frac{i j}{6}+\frac{j^2}{3},  ~\hspace{5em}~ \frac{1}{4}+\frac{i}{3}+\frac{i^2}{12}-\frac{j}{6}+\frac{i j}{6}+\frac{j^2}{3},  ~\hspace{5em}~ \frac{1}{4}+\frac{i^2}{12}-\frac{j}{2}+\frac{i j}{6}+\frac{j^2}{3}] \}$ \\ if $|i| = 3 \mod 6, |j| = 0 \mod 3$;
          
\item $\{ x_1 E_{\{A,B,C\}}[\frac{1}{12} \left(1+i^2+2 i (-1+j)-2 j+4 j^2\right),  ~\hspace{5em}~ \frac{1}{12} \left(-1+i^2-6 j+2 i j+4 j^2\right), \\ \frac{1}{12} \left(3+i^2+2 i (-2+j)-10 j+4 j^2\right)],  \\
    x_3 E_{\{A,B,C\}}[\frac{1}{12} \left(-3+i^2+2 j+4 j^2+2 i (1+j)\right),  ~\hspace{5em}~ \frac{1}{12} \left(7+i^2-2 j+4 j^2+2 i (2+j)\right), \\ \frac{1}{12} \left(-1+i^2-6 j+2 i j+4 j^2\right)], \\
    x_5 E_{\{A,B,C\}}[\frac{1}{12} \left(11+i^2+6 j+2 i j+4 j^2\right)-1,  ~\hspace{5em}~ \frac{1}{12} \left(-3+i^2+2 j+4 j^2+2 i (1+j)\right), \\ \frac{1}{12} \left(1+i^2+2 i (-1+j)-2 j+4 j^2\right)] \}$ \\ if $|i| = 3 \mod 6, |j| = 1 \mod 3$;
    
\item $\{x_1 E_{\{A,B,C\}}[\frac{1}{12} \left(1+i^2+2 j+4 j^2+2 i (1+j)\right),  ~\hspace{5em}~ \frac{1}{12} \left(3+i^2-2 j+4 j^2+2 i (2+j)\right), \\ \frac{1}{12} \left(-1+i^2-6 j+2 i j+4 j^2\right)], 
\\ x_3 E_{\{A,B,C\}}[\frac{1}{12} \left(-1+i^2+6 j+2 i j+4 j^2\right),  ~\hspace{5em}~ \frac{1}{12} \left(1+i^2+2 j+4 j^2+2 i (1+j)\right), \\ \frac{1}{12} \left(-3+i^2+2 i (-1+j)-2 j+4 j^2\right)], 
\\ x_5 E_{\{A,B,C\}}[\frac{1}{12} \left(-3+i^2+2 i (-1+j)-2 j+4 j^2\right),  ~\hspace{5em}~ \frac{1}{12} \left(-1+i^2-6 j+2 i j+4 j^2\right), \\ \frac{1}{12} \left(7+i^2+2 i (-2+j)-10 j+4 j^2\right)]\}$ \\ if $|i| = 3 \mod 6, |j| = 2 \mod 3$.

\end{itemize}

\end{theorem}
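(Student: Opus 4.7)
The plan is to prove the formulae by induction on the length of the canonical path from the origin. The base case is the alcove $(0,0)$, which falls in the class $|i|\equiv 0 \pmod 6$, $|j|\equiv 0 \pmod 3$ (with $i=j=0$); here every exponent of $A, B, C$ vanishes and the displayed cluster collapses to $\{x_0, x_2, x_4\}$ (together with its $\sigma$-image $\{x_1, x_3, x_5\}$), which is the initial seed as required.

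For the inductive step, suppose the formulae hold at an alcove $\mathcal{A}$ reached by a canonical path $\mathcal{P}$, and let $\mathcal{A}'$ be the next alcove along $\mathcal{P}$, obtained by some $\tau_k = \mu_{2k-2}\circ \mu_{2k-1}$. Each of the two binomial exchange relations expresses a new cluster variable as a ratio whose numerator is a binomial in the current seed. Substituting the inductive Laurent expressions for the current cluster and invoking the factorization formula developed elsewhere in Section \ref{sec:factor}, the numerator factors as some initial variable $x_m$ times a monomial in $A, B, C$, while the denominator is itself of the form $x_{m'}\cdot A^{f}B^{g}C^{h}$. The task then reduces to checking that the resulting shift in the exponent triple $(f,g,h)$ matches the difference between the theorem's quadratic polynomials evaluated at the coordinates of $\mathcal{A}'$ and $\mathcal{A}$. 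Since these polynomials are quadratic in $(i,j)$, this is a linear identity in the shift, verified by direct computation.

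The splitting into three sub-cases per $(\cdot,\cdot)$- or $\{\cdot,\cdot\}$-alcove reflects the three congruence classes $|j|\bmod 3$: a canonical path climbs in blocks $\beta = 123$ until its turning bar $|$, and the specific $\mu_i$ applied at each subsequent step therefore depends on $|j|\bmod 3$. This also explains the rational corrections $\pm \tfrac{1}{4}, \pm \tfrac{1}{6}, \pm \tfrac{1}{12}$ distinguishing the displayed polynomial exponents across the nine formulae. The SW-cone formulae are obtained by a parallel induction using the rotated canonical paths beginning with $\beta^{-1}=321$ and running in the opposite direction, as described in Section \ref{subsec:cox}; the coordinate sign choices in that section ensure that the same quadratic expressions in $(i,j)$ still govern the exponents after the involution $(i,j)\mapsto(-i,-j)$.

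The main obstacle is bookkeeping: each sub-case requires its own verification of the exponent-shift rule, and within each one must align the induction with the coordinate shifts $i\mapsto i+\tfrac{1}{2}$, $j \mapsto j-1$ relating $\{\cdot,\cdot\}$- and $(\cdot,\cdot)$-alcoves, together with both horizontal directions of the canonical path's leg. The core algebraic content, however, is modest: once the factorization formula for $\tau$-mutation variables is in hand, every sub-case collapses to a routine comparison of quadratic polynomials in $(i,j)$ against the shift rules imposed by the binomial exchange relations, with the symmetry $\sigma$ cutting the work for paired variables in half.
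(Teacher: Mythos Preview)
Your approach is correct, but it differs from the paper's in a way worth noting. You propose a single induction along the canonical path, verifying at each step that the exponent shift dictated by the factorization phenomenon matches the difference of the displayed quadratics. The paper instead decomposes the canonical path into its vertical piece $\beta^{2j/3}$ and its horizontal piece $\theta(\beta)^{i/3}$, handles each separately, and then \emph{composes by substitution}. Concretely: Proposition~\ref{cor:beta} gives a closed formula for the cluster after $\beta^n$; the horizontal piece is $\theta(123\ldots)$ for some $\theta$, so the permutation $\alpha$ from Table~\ref{tab:perms2} converts Proposition~\ref{cor:beta} into a closed formula for the horizontal piece as well; finally, one substitutes the vertical cluster variables $X_i$ for the initial $x_i$ in the horizontal formula and simplifies. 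Thus the paper's argument reduces to one inductive proof (Proposition~\ref{cor:beta}) plus one substitution per congruence class, rather than a separate exponent-shift check for every step and every sub-case along the path.

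What your approach buys is conceptual directness: no need to invoke the $\theta\leftrightarrow\alpha$ correspondence or to justify the substitution step. What the paper's approach buys is economy: the symmetry of $Q$ does most of the work, and only the $\beta$-sequence needs its own induction. Either is a complete proof once the factorization phenomenon is established.

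One small inaccuracy: you write that the verification ``is a linear identity in the shift''. The exponent-shift rule $a_1' = a_2 + a_3 - a_1 + 1$ (and its analogues) is an affine combination of the current exponents, which are themselves quadratic in $(i,j)$; so each verification is an equality of two quadratic polynomials in $(i,j)$, not a linear one. This does not affect the validity of the argument --- it is still a finite polynomial identity checked by direct computation --- but the description is slightly off.
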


\begin{remark} These expressions allow us to express all cluster variables which appear in some alcove.  In particular, the cluster variables explicitly described by the above theorem are for any alcove reached after (1) $2\ell$ steps along $(\ldots 123 \ldots)$ (up or down) and (2) $3k$ horizontal steps right or left. Hence, every third alcove to the right or left of $(\ldots 123 \ldots)$ (and hence on any row of the Coxeter lattice) is understood. Since a horizontal path is given by $\theta(123\ldots)$ for some permutation $\theta$, the cluster variable at vertex $i$ is updated only after every third step. Hence, the above formulas are sufficient for extracting the cluster variable for any alcove.
\end{remark}

\begin{remark}
\label{rmk:tau}
It is straightforward but tedious to extract from the explicit formula that the relations from Equation \ref{eq: tau relations} are the only relations satisfied by the $\tau'$'s. To see this, for each alcove $(i,j)$ (resp. $\{i,j\}$), set the expressions for $\mathcal{C}_{(i,j)}$ (resp. $\mathcal{C}_{\{i,j\}}$) equal to the inital cluster $\{x_0, x_1, x_2, x_3, x_4, x_5 \}$ (considered up to permutation) and solve for $i$ and $j$. In all cases we will get $i = j = 0$ as desired. 
\end{remark}

Theorem \ref{thm:alcove} is driven by the fact that all cluster variables produced by $\tau$-mutation sequences factor. To prove this we introduce some convenient notation. Suppose that we have a $\tau$-mutation sequence $S = s_1 s_2 s_3 s_4 \ldots$. Denote by $\eta(n,i)$ the number of times we have mutated at vertex $i$ in the first $n$ terms ($2n$ total mutations) of $S$. Furthermore, let $\mathcal{C}_n(Q)$ denote the ordered cluster obtained after applying $n$ terms of $S$. Define the following six functions. For $i = 0,2,4$, we set
\begin{equation*}
f_i(n) =
  \begin{cases}
   i & \text{if } \eta(n,i) = 0\, \mathrm{mod}\, 2 \\
   i+1       & \text{else}
  \end{cases}
\end{equation*}
and for $i = 1,3,5$, we set
\begin{equation*}
f_i(n) =
  \begin{cases}
   i & \text{if } \eta(n,i) = 0\, \mathrm{mod}\, 2 \\
   i-1       & \text{else}.
  \end{cases}
\end{equation*}
\begin{proposition}[Factorization Phenomenon]
\label{thm:factor}
Fix $S = s_1 s_2 s_3 s_4 \ldots$ to be any $\tau- $mutation sequence on $Q$. The cluster $\mathcal{C}_n(Q)$ has the following form:
\begin{align*}
\mathcal{C}_n(Q) = \{ x_{f_0(n)}A^{a_1}B^{b_1}C^{c_1}, x_{f_1(n)}A^{a_1}B^{b_1}C^{c_1}, x_{f_2(n)}A^{a_2}B^{b_2}C^{c_2},\\ x_{f_3(n)}A^{a_2}B^{b_2}C^{c_2}, x_{f_4(n)}A^{a_3}B^{b_3}C^{c_3},  x_{f_5(n)}A^{a_3}B^{b_3}C^{c_3} \}.
\end{align*}
\end{proposition}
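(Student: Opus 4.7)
The plan is to induct on $n$. The base case $n=0$ is immediate: $\mathcal{C}_0(Q) = \{x_0, \ldots, x_5\}$, all exponents vanish, and $f_i(0) = i$.

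For the inductive step, assume the claim at step $n$ and apply $\tau_k = \mu_{2k-2} \circ \mu_{2k-1}$. Since vertices $2k-2$ and $2k-1$ are antipodal and share no edges, the two mutations commute, and a short computation shows that $\tau_k$ sends the current labeled quiver to itself with labels $2k-2$ and $2k-1$ swapped. Hence at every step the labeled quiver has the form $\pi_n(Q)$ for some $\pi_n \in \langle (01), (23), (45) \rangle$. Two features of every such $\pi_n(Q)$ will be essential: first, each vertex $j$ has exactly one in-neighbor and one out-neighbor from each of the two antipodal pairs not containing $j$; second, the out-neighbors of vertex $2k-2$ are precisely the in-neighbors of vertex $2k-1$ (and vice versa), which yields the symmetry $X_{2k-2}'X_{2k-2} = X_{2k-1}'X_{2k-1}$. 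I will compute $X_{2k-2}'$ from the exchange relation and then extract $X_{2k-1}'$ using this identity.

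Expanding the exchange relation gives $X_{2k-2}' X_{2k-2} = X_\alpha X_\beta + X_\gamma X_\delta$, where $\{\alpha,\gamma\}$ and $\{\beta,\delta\}$ are the two antipodal pairs of indices $k', k'' \in \{1,2,3\}\setminus\{k\}$. By the inductive hypothesis $X_\alpha$ and $X_\gamma$ share a common factor $A^{a_{k'}}B^{b_{k'}}C^{c_{k'}}$, and similarly for $X_\beta, X_\delta$, so the right-hand side becomes
\[
A^{a_{k'}+a_{k''}} B^{b_{k'}+b_{k''}} C^{c_{k'}+c_{k''}} \cdot \bigl(x_{f_\alpha} x_{f_\beta} + x_{f_\gamma} x_{f_\delta}\bigr).
\]
The crux of the argument is a lemma asserting that the residual binomial equals $x_{2k-2} x_{2k-1}$ times the numerator of the letter $L_k \in \{A, B, C\}$ indexed by the mutated pair (for $k=1$, the target is $x_0 x_1 A = x_2 x_4 + x_3 x_5$).

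Because $\sigma = (01)(23)(45)$ preserves $Q$, the quiver $\pi_n(Q)$ admits only four essentially distinct states, and I would verify the lemma by case analysis across these four states. The key observation is that changing the quiver state exchanges the ``in'' and ``out'' roles of the neighbors at vertex $2k-2$ in exactly the same way that flipping the parities $\eta(n,\cdot) \bmod 2$ swaps the $f$-values; consequently, upon substituting the current $f$-values the binomial returns the same fixed expression in all four cases. I expect this case analysis to be the only real obstacle. Once it is in place, dividing by $X_{2k-2} = x_{f_{2k-2}(n)} A^{a_k}B^{b_k}C^{c_k}$ (and using $\{f_{2k-2}(n), f_{2k-1}(n)\} = \{2k-2, 2k-1\}$) yields $X_{2k-2}'$ in the desired factored form, with new exponents obtained by summing the two unmutated exponent vectors, incrementing the $L_k$ coordinate, and subtracting the mutated exponent vector. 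The symmetry $X_{2k-1}' X_{2k-1} = X_{2k-2}' X_{2k-2}$ then delivers $X_{2k-1}'$ with the same $A^{?}B^{?}C^{?}$ factor and the expected $f$-value, completing the induction.
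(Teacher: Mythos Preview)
Your proposal is correct and follows essentially the same strategy as the paper's proof: induct on $n$, use that each $\tau_k$ merely swaps the labels $2k-2$ and $2k-1$ on the quiver, and then factor the exchange relation to see that the new variable has the required form with leading term $x_{f_{2k-2}(n+1)}$. Your key observation---that the quiver-state change and the $f$-value swap cancel so that the residual binomial is always $x_{2k-2}x_{2k-1}$ times the numerator of $L_k$---is exactly the content of the paper's line ``the exchange polynomial at any vertex is invariant under $\tau$-mutation by the described action of any $\tau_i$ on $Q$.''

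The only presentational difference is that the paper packages your four-case lemma as a single WLOG reduction: since the quiver state $\pi_n(Q)$ and the leading terms $x_{f_i(n)}$ are governed by the same permutation, one may simply assume $f_i(n)=i$ and the original labeling, then compute once. This saves the case analysis you anticipate but is logically identical to what you propose. Your explicit handling of the second mutation via the identity $X_{2k-2}'X_{2k-2}=X_{2k-1}'X_{2k-1}$ is a nice touch; the paper instead just says the other vertices are analogous.
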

\begin{proof}
\begin{par}Call the above form for the cluster variables a $\tau-$presentation, and the $x_{f_i(n)}$ term in each factorization the \textbf{leading term}. We induct on the cluster $\mathcal{C}_n(Q)$. After applying any initial $\tau_i$ the cluster has the above form, thus proving our base case. Assume that $\mathcal{C}_n(Q)$ has a $\tau$-presentation and mutate at vertex $0$. Denote the cluster variable at vertex $i$ in $\mathcal{C}_n(Q)$ by $X_i$. It suffices to show that $X_0',$ the variable produced at vertex $0$ after applying $\tau_1$, has $\tau-$presentation $x_{f_0(n+1)}A^{a_1'}B^{b_1'}C^{c_1'}$; the cases of mutating at other vertices are analogous. \end{par}
\begin{par}Note that by the symmetry of the quiver,  $\theta(X_i) = X_{\theta(i)}$ for any permutation $\theta: \{0,1,2,3,4,5\} \to \{0,1,2,3,4,5\}$. Each $\tau_i$ has an action on the quiver that simply relabels the vertices by switching vertex $2i-1$ with vertex $2i-2$. Moreover, when we mutate by  $\tau_i$, $\eta (n,2i-1)$ and $\eta (n,2i-2)$ increase by $1$, which by the inductive assumption flips the leading terms of the cluster variables at vertices $2i-1$ and $2i-2$. Since each vertex $i$ has leading term $x_i$ in the initial cluster, for the purposes of the exchange at vertex $0$, we can assume without loss of generality each cluster variable in $\mathcal{C}_n(Q)$ has $f_i(n) = i$, and that the quiver has the original labeling. This is because the exchange polynomial at any vertex is invariant under $\tau$-mutation by the described action of any $\tau_i$ on $Q$. \end{par}

Now suppose $\mathcal{C}_n(Q)$ has the form as in the statement of this proposition. Mutating at vertex $0$, we compute the exchange relation: 
\[
\begin{split}
&X_0X_0' = X_2X_4+X_3X_5 \implies \\ &X_0' X_0= \left(\frac{x_2x_4+x_3x_5}{x_0x_1}\right)^{a_2+a_3}\left( \frac{x_1x_4+x_0x_5}{x_2x_3}\right)^{b_2+b_3}\left( \frac{x_0x_2+x_1x_3}{x_4x_5}\right)^{c_2+c_3}\left(x_2x_4+x_3x_5\right) \implies \\ 
&X_0' = x_1\left(\frac{x_2x_4+x_3x_5}{x_0x_1}\right)^{a_2+a_3-a_1+1}\left( \frac{x_1x_4+x_0x_5}{x_2x_3}\right)^{b_2+b_3-b_1}\left( \frac{x_0x_2+x_1x_3}{x_4x_5}\right)^{c_2+c_3-c_1}.
\end{split}
\]
So $X_0'$ has $\tau-$presentation with leading term $x_{f_0(n+1)} = x_1$, as desired. This completes the induction. Finally, note that all exponents in the $\tau$-presentation are non-negative by the Laurent phenomenon \cite{FZ}. 
\end{proof}

Using Proposition \ref{thm:factor}, we can analyze any sequence of mutations by looking at the corresponding sequence of exponents $\{a,b,c\}$ associated to the cluster variables. Using symmetry and noting how the exponents of $X_0'$ relate to the exponents of variables in the previous cluster yields a recursion for computation of cluster variables that can be implemented in linear time (we omit the precise statement for brevity). Using this recursion and induction, we can verify the following explicit formula for cluster variables arising from the $123123 \ldots$ sequence.

\begin{proposition}
\label{cor:beta}
For positive $n$, recall that $\beta^n = (123)^n$. For negative $n$, we define $\beta^{n} = (321)^{-n}$. We have two cases:

\begin{itemize}

\item[(a)] If $n$ is even, 
\begin{equation*}
\label{eqn:exdiam1}
\begin{split}
\beta^n(Q) = \mathcal{C}_{3n}(Q) = \{&x_0 A^{\frac{3n^2}{4}} B^{\frac{3n^2 - 2n}{4}} C^{\frac{3n^2 - 4n}{4}}, x_1 A^{\frac{3n^2}{4}} B^{\frac{3n^2 - 2n}{4}} C^{\frac{3n^2 - 4n}{4}},\\
 &x_2 A^{\frac{3n^2+2n}{4}} B^{\frac{3n^2}{4}} C^{\frac{3n^2 - 2n}{4}}, 
 x_3 A^{\frac{3n^2+2n}{4}} B^{\frac{3n^2}{4}} C^{\frac{3n^2 - 2n}{4}},\\
  &x_4 A^{\frac{3n^2+4n}{4}} B^{\frac{3n^2+2n}{4}} C^{\frac{3n^2}{4}}, x_5 A^{\frac{3n^2+4n}{4}} B^{\frac{3n^2+2n}{4}} C^{\frac{3n^2}{4}}\}
\end{split}
\end{equation*}

\item[(b)] If $n$ is odd,
\begin{equation*}
\begin{split}
\beta^n(Q) = \mathcal{C}_{3n}(Q) = \{&x_1 A^{\frac{3n^2+1}{4}} B^{\frac{3n^2 - 2n - 1}{4}} C^{\frac{3n^2 - 4n + 1}{4}}, x_0 A^{\frac{3n^2 + 1}{4}} B^{\frac{3n^2 - 2n - 1}{4}} C^{\frac{3n^2 - 4n + 1}{4}}, \\
&x_3 A^{\frac{3n^2+2n-1}{4}} B^{\frac{3n^2 + 1}{4}} C^{\frac{3n^2 -2n - 1}{4}}, x_2 A^{\frac{3n^2+2n-1}{4}} B^{\frac{3n^2 + 1}{4}} C^{\frac{3n^2 -2n - 1}{4}},\\
 &x_5 A^{\frac{3n^2+4n+1}{4}} B^{\frac{3n^2+2n-1}{4}} C^{\frac{3n^2+1}{4}}, x_4 A^{\frac{3n^2+4n+1}{4}} 
B^{\frac{3n^2+2n-1}{4}} C^{\frac{3n^2+1}{4}}\}.
\end{split}
\label{eqn:exdiam2}
\end{equation*}

\end{itemize}

\end{proposition}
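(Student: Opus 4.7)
The plan is to prove both parts of the proposition together by induction on $n$, using the exchange relation embedded in the proof of Proposition \ref{thm:factor} to derive a clean recursion on the exponent triples.

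For the base case $n = 0$, every exponent in both formulas evaluates to $0$ and the leading terms are $x_0, x_1, \ldots, x_5$, which agrees with $\mathcal{C}_0(Q) = Q$.

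To set up the induction I first extract the update rule on exponent triples. The computation in the proof of Proposition \ref{thm:factor} shows that a single $\tau_1$-step, applied to a cluster in $\tau$-presentation, replaces $(a_1, b_1, c_1)$ by $(a_2 + a_3 - a_1 + 1,\, b_2 + b_3 - b_1,\, c_2 + c_3 - c_1)$ while flipping the leading terms $x_0 \leftrightarrow x_1$. Using the $\mathbb{Z}/3$ rotational symmetry of $Q$ (the graph automorphism $(034)(125)$, which cyclically sends the antipodal pairs $\{0,1\} \to \{2,3\} \to \{4,5\}$ and permutes the auxiliary variables as $A \to B \to C \to A$), one reads off the analogous update rules for $\tau_2$ and $\tau_3$:
\begin{align*}
\tau_2 &: (a_2, b_2, c_2) \mapsto (a_1 + a_3 - a_2,\, b_1 + b_3 - b_2 + 1,\, c_1 + c_3 - c_2),\\
\tau_3 &: (a_3, b_3, c_3) \mapsto (a_1 + a_2 - a_3,\, b_1 + b_2 - b_3,\, c_1 + c_2 - c_3 + 1),
\end{align*}
each accompanied by the corresponding leading-term flip. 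Composing these three affine updates in order yields a single transformation $T_\beta$ on the $9$-tuple of exponents induced by $\beta = \tau_1 \tau_2 \tau_3$.

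For the inductive step, I would substitute the proposed exponent triples for $\mathcal{C}_{3n}(Q)$ into $T_\beta$ and check that the output agrees with the proposed exponent triples for $\mathcal{C}_{3(n+1)}(Q)$. Because the formulas depend on the parity of $n$, this splits into two cases: $n$ even $\to n+1$ odd, and $n$ odd $\to n+1$ even. Each reduces to verifying nine polynomial identities in $n$ of degree at most two, which follow by direct expansion. The leading-term factor is automatic, since each vertex is mutated exactly $n$ times along $\beta^n$, giving $f_i(3n) = i$ for $n$ even and $f_i(3n)$ equal to the antipode of $i$ for $n$ odd, matching the proposition.

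The main obstacle is purely computational bookkeeping: applying $T_\beta$ to the closed-form expressions and confirming nine quadratic identities per parity transition. No structural input beyond Proposition \ref{thm:factor} and the $\mathbb{Z}/3$ rotational symmetry of the dP3 quiver is required.
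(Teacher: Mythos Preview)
Your proposal is correct and matches the paper's own approach exactly: the paper simply states that the formula follows ``using this recursion and induction,'' where the recursion is precisely the exponent update extracted from the proof of Proposition~\ref{thm:factor}, and you have spelled out the details the paper omits. One small omission: your induction $n \to n+1$ only covers $n \geq 0$; for negative $n$ you should also run the step $n \to n-1$ via $T_{\beta^{-1}}$ (i.e., applying $\tau_3\tau_2\tau_1$), but the verification is entirely symmetric.
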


Now we have all the ingredients to provide the method of proof of Theorem \ref{thm:alcove}. We show this explicitly to calculate the variable at vertex $0$ in the cluster $\mathcal{C}_{ij}$ with $|i| \equiv 0 \mod 6, |j| \equiv 0 \mod 3$. The canonical path to $(\frac{i}{2},j)$ is given by $(123)^{\frac{2j}{3}} (213)^{\frac{i}{3}}$. The cluster variable at vertex $0$ after applying $(213)^{\frac{i}{3}}$, i.e. $\big(\theta(123)\big)^{\frac{i}{3}}$ for $\theta = (12)$, is 
\begin{equation*}
\label{eqn:clus1}
\alpha (x_ 2 A^{\frac{1}{4} \left(\frac{2 i}{3}+\frac{i^2}{3}\right)} B^{ \frac{i^2}{12}} C^{\frac{1}{4} \left(-\frac{2 i}{3}+\frac{i^2}{3}\right)}) = x_0A^{\frac{i^2}{12}}B^{\frac{1}{4} \left(\frac{2 i}{3}+\frac{i^2}{3}\right)}C^{\frac{1}{4} \left(-\frac{2 i}{3}+\frac{i^2}{3}\right)}
\end{equation*} 

\noindent where $\alpha$ is the permutation $(02)(13)(45)$ obtained from Table \ref{tab:perms2}. We read the cluster after $(123)^{\frac{2j}{3}}$ from Proposition \ref{cor:beta}. 

Denote the cluster variables by $\{X_0, X_1, X_2, X_3, X_4, X_5 \}$. Then we have:

\begin{align*}
X_0 = x_0A^{\frac{j^2}{3}}B^{\frac{j^2-j}{3}}C^{\frac{j^2-2j}{3}},
X_1 =  x_1A^{\frac{j^2}{3}}B^{\frac{j^2-j}{3}}C^{\frac{j^2-2j}{3}},
X_2 =  x_2A^{\frac{j^2+j}{3}}B^{\frac{j^2}{3}}C^{\frac{j^2-j}{3}}, \\
X_3 =  x_3A^{\frac{j^2+j}{3}}B^{\frac{j^2}{3}}C^{\frac{j^2-j}{3}}, 
X_4 = x_4A^{\frac{j^2+2j}{3}}B^{\frac{j^2+j}{3}}C^{\frac{j^2}{3}},
X_5 =  x_5A^{\frac{j^2+2j}{3}}B^{\frac{j^2+j}{3}}C^{\frac{j^2}{3}}.
\end{align*}

Substituting $X_i$ into $x_i$ in the formula for the cluster variable at vertex $0$, we obtain the desired computation of the variable at vertex zero of the cluster $\mathcal{C}_{ij}$:
\begin{equation*}
x_0 A^{\frac{i^2}{12}+\frac{i j}{6}+\frac{j^2}{3}} B^{\frac{i}{6}+\frac{i^2}{12}-\frac{j}{3}+\frac{i j}{6}+\frac{j^2}{3}} C^{-\frac{i}{6}+\frac{i^2}{12}-\frac{2 j}{3}+\frac{i j}{6}+\frac{j^2}{3}}.
\end{equation*}   

\end{section}

\begin{section}{The Northeast Cone}
\label{sec:integercone}

Now we return to the original problem of relating cluster variables produced by $\tau$-mutation sequences to subgraphs of the brane tiling, starting with the NE cone of the Coxeter lattice first introduced in Section \ref{subsec:cox}. We will first describe a shorthand method for drawing Aztec castles and provide several examples. We will then proceed to prove a recursion on the weights of perfect matchings of these subgraphs utilizing a technique known as graphical condensation first introduced in \cite{kuo} and further generalized in \cite{speyer}. Finally we will prove a recursion on the covering monomials of the castles, allowing us to complete the proof of the Northeast Cone Theorem (Theorem \ref{thm:intcone}).

\begin{subsection}{Hexagonal Notation}

\par{In this section we introduce convenient notation that allows us to define the graphs of interest. Given a six-tuple $(a,b,c,d,e,f)$ of non-negative integers satisfying \begin{eqnarray} \label{eq:closed_curve1} a+c &=& d+f \\ \label{eq:closed_curve2} b+c &=& e+f,\end{eqnarray}                                                                                                                                                                        we associate the following hexagon illustrated in Figure \ref{fig:hexagon}.  As we will explain, these two equations are necessary and sufficient to ensure that the hexagon will be a closed curve.

\begin{figure}[h]
    \centering
    \includegraphics[keepaspectratio=true, width=65mm]{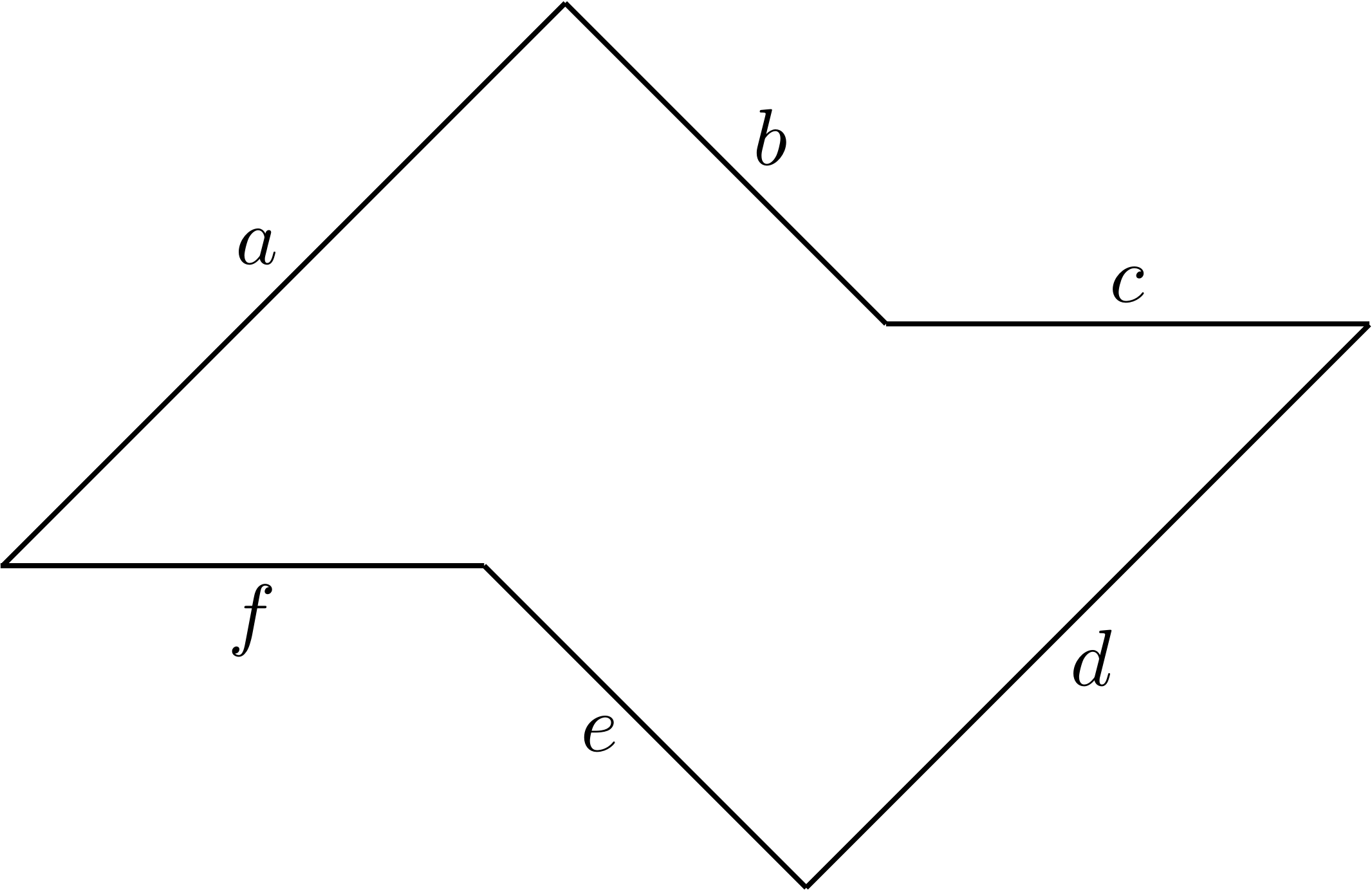}
    \caption{\small The hexagonal notation for a general castle, the sides of which are identified by the tuple $(a,b,c,d,e,f)$.  }
    \label{fig:hexagon}
\end{figure}

}

\par{
In turn, this six-tuple defines a subgraph $G$ of the brane tiling. We use the tuple to define the boundary of the planar graph $G$ in terms of a path in the brane tiling. Begin the boundary path $P$ at some point $p$, called the basepoint of $G$, in the brane tiling which is the white vertex in the center of a $6$-cycle (see Section \ref{subsec:quivbrane} for terminology). The boundary path consists of six subpaths, each of which will be defined in terms of the cardinal directions naturally associated to the edges of the brane tiling. 

}

For notational convention, if $P = (p_1, p_2, p_3, \ldots)$ and $Q = (q_1, q_2, q_3, \ldots)$ are paths in the brane tiling represented by cardinal directions $p_1, p_2,  p_3, \ldots, p_n$ and $q_1, q_2, q_3, \ldots,  q_m$,
respectively, denote the left-to-right concatenation of the two paths by $PQ = (p_1, p_2, p_3, \ldots, p_n, q_1, \ldots, q_m)$. Likewise, $P^i$ denotes the path $P$ concatenated with itself $i$ times. Furthermore, define the following four-step subpaths illustrated in Figure \ref{fig: boundarypaths}.  Note that these are zig-zag paths \cite{GK} in the brane tiling, as observed by Rick Kenyon, although we found these graphs by a different technique\footnote{A brief discussion on how these graphs (and the SW Aztec castles described in Section \ref{sec:new-half-int}) were first computed can be found in Problem \ref{prob:ef} where we utilised a method detailed in \cite{franco_eager}. Eager and Franco's methods were generalizations of those from \cite{CJ}, based on earlier work on ``pyramid partition functions'' for the conifold of Szendr\H{o}i \cite{szendroi}.}.

\begin{figure}[H]
    \centering
    \includegraphics[keepaspectratio=true, width=90mm]{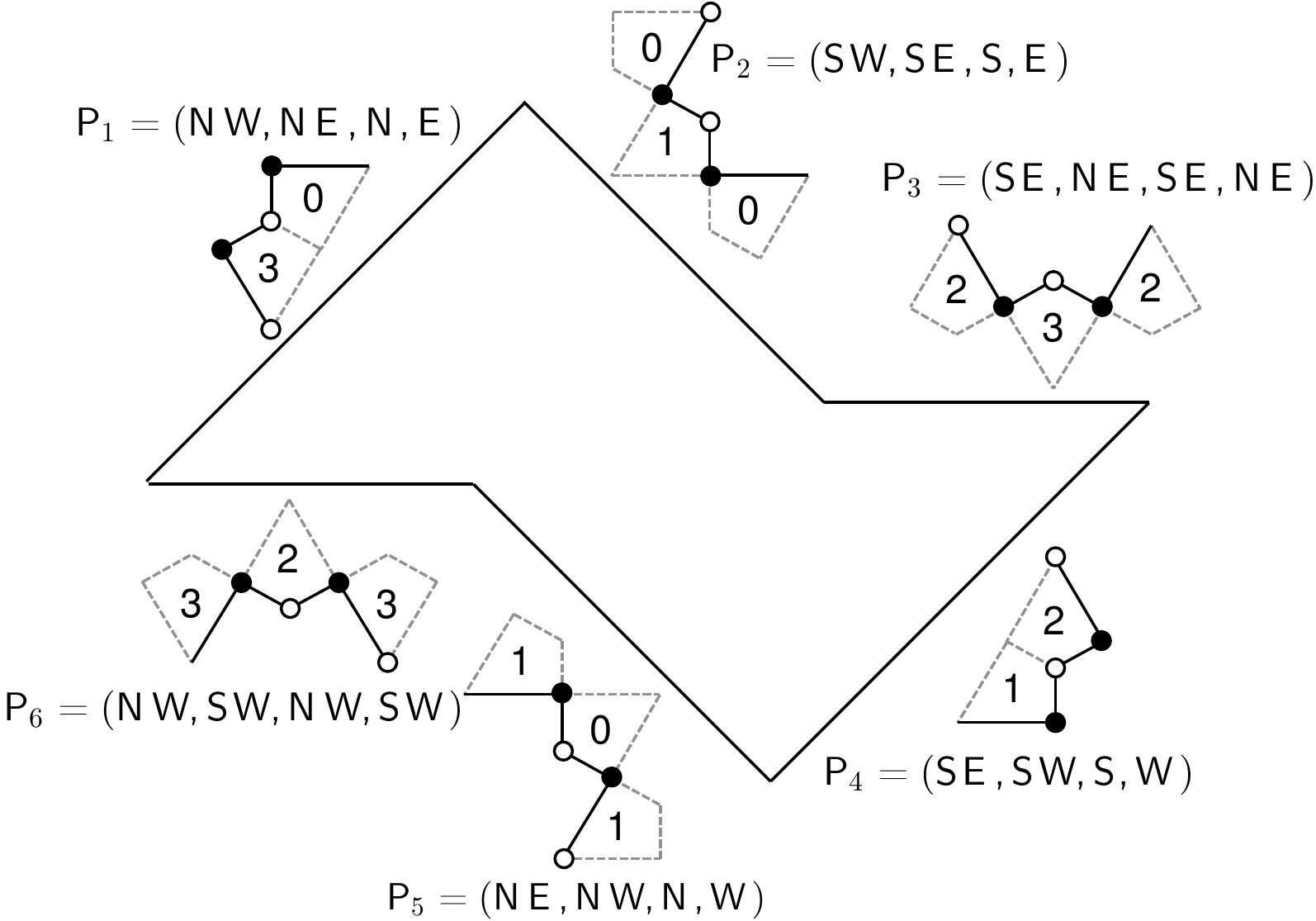}
    \caption{\small The six four-step subpaths that trace out a boundary path P, placed around a graph in hexagonal notation to signify their position in the brane tiling. The desired subgraph is induced by all of the vertices bounded by P.}
    \label{fig: boundarypaths}
\end{figure}

Now, the tuple $(a,b,c,d,e,f)$ corresponds to the following path. Choose $p$ as described above and trace the path 
\[
P = P_1^a P_2^b P_3^c P_4^d P_5^e P_6^f.
\]
Provided that $P$ is a simple closed curve, we define $G$ to be the subgraph induced by all vertices on the boundary path $P$ and in the interior region of $P$. When we want to keep track of the basepoint $p$, it is convenient to write $G = (a,b,c,d,e,f)_p$.  To see that equations (\ref{eq:closed_curve1}) and (\ref{eq:closed_curve2}) are necessary and sufficient, let the direction of $a$ represent one fundamental loop of the torus and $b$ represent the other.  Then the above directions correspond as follows. $$a \leftrightarrow (1,0), b \leftrightarrow (0,1),  c \leftrightarrow (1,1), d \leftrightarrow (-1,0), e \leftrightarrow (0,-1), f \leftrightarrow (-1,-1)$$ in the homology basis of the torus.  Hence, $(a,b,c,d,e,f)$ corresponds to a closed curve if and only if $(a+c-d-f,b+c-e-f) = (0,0)$.

\end{subsection}

\begin{subsection}{Graphs in the Northeast Cone}
\label{sec:graphsintcone}

Utilizing this hexagonal notation, we now define a new family of subgraphs of the brane tiling called \textbf{NE Aztec castles}. Note that these castles include the integer order Aztec dragons as a special case. Later we will prove that this family consists of all of the subgraphs of the brane tiling contained in the NE cone.
\footnote{The Aztec castles also appear in \cite{lai} as the dual graphs to a more general family of ``Aztec dragon regions'' written $DR^{(1)}(a,b,c)$  and $DR^{(2)}(a,b,c)$ in his notation, corresponding to SW Aztec castles and NE Aztec castles, respectively, for special choices of $a,b$, and $c$. Tri Lai computes their perfect matchings under a different weighting system using techniques from \cite{lai'}.}

\begin{definition}[Northeast Aztec Castle]
\label{def:integer castle}
Define the $(i, j)$--NE Aztec castle $\gamma^{j}_i$ for $i,j \in \mathbb{Z}_{\geq 0}$ as follows.  These correspond to the even alcoves in the 
NE cone (up to a reflection by $\sigma$ as defined in Section \ref{sec:tau}).  
\begin{equation}
\gamma^{j}_i = 
(i+j,~~j,~~i-1,~~i+j-1,~~j-1,~~i)
\end{equation}

\end{definition}

\begin{remark}
Notice that for $0 \leq i \leq j$, we can recover $i$ and $j$ from hexagonal notation $(a,b,c,d,e,f)$ as $i=c$ and $j=e$ (when $i \equiv 0 \mod 3$) so that the corresponding Aztec castle is $\sigma \gamma_i^j$ in this case, and as $i=f$ and $j=b$ for $\gamma_i^j$ otherwise.
\end{remark}

\end{subsection}

\begin{subsection}{Integer Order Aztec Dragons}

The integer order Aztec dragons introduced in \cite{zhang} fit into this framework with a slight modification. In hexagonal notation, the $n^{\mathrm{th}}$ order Aztec dragon, $D_n$ ($n \in \mathbb{N}$) corresponds to the alcove $(0,n)$ and thus would be labeled as $\sigma \gamma_0^n = (n-1,n-1,0,n,n,-1)$.  (Note that $i \equiv 0 \mod 3$ in this case.)  We trace out all of the paths $P_1$, $P_2$, $\dots$ as usual, until we come to $P_6$.  The significance of the $-1$ is:

(a) Trace the path $P_6$ backwards, i.e. (NE, SE, NE, SE) bordering faces $1$ and $5$.

(b) We then remove the faces $1$ and $5$ since they border a ``negative'' path.

\noindent With this modification, we indeed obtain the integer order Aztec dragons as desired.  

\vspace{1em}

We do the same procedure when $b$, $c$, or $e$ equal $-1$ in hexagonal notation.  By construction, it is not possible for $a$ nor $d$ to be negative for $i,j\geq 0$. 

\begin{remark}
\label{rem:neghex}

We will not use or need this in this paper, but there is a more general way to interpret negatives, such as $-k$ for $k>1$, in hexagonal notation.  Again, a negative value signifies tracing a subpath backwards but when $k>1$, we remove more exterior faces from the graph.  Note that such values do not appear for $i,j\geq 0$.  

Using this construction, we can describe the graphs corresponding to alcoves in the SW cone.  In particular, the Aztec castle $\tilde\gamma_i^j$ (with $i, j \leq 0$) in the SW cone, see Section \ref{sec:new-half-int},  would be $\gamma_{i-1}^{-i-j}$ or $\sigma\gamma_{i-1}^{-i-j}$ in this fashion.  
\end{remark}

\begin{remark}
\label{rmk:rho} Let $\rho$ be the permutation $(03)(12)$. Then $\rho$ has a nice action on the brane tiling as a reflection over a line of slope $-\frac{\sqrt{3}}{3}$, and in terms of the hexagonal notation, this is exhibited in Figure \ref{fig: rho}.  Additionally, we have the symmetry $\gamma_{j}^i = \rho \gamma_{i}^j$ comparing the two under hexagonal notation.  In particular, the even alcoves on the overlap of regions I and II are fixed under $\rho$ and correspond to $\gamma_{i}^i$.  Note also that the permutation $\alpha$ sending region I to II and VII to VIII is $\rho\sigma = \sigma \rho$. 
\end{remark} 

\begin{figure}[H]
    \centering
    \includegraphics[keepaspectratio=true, width=80mm]{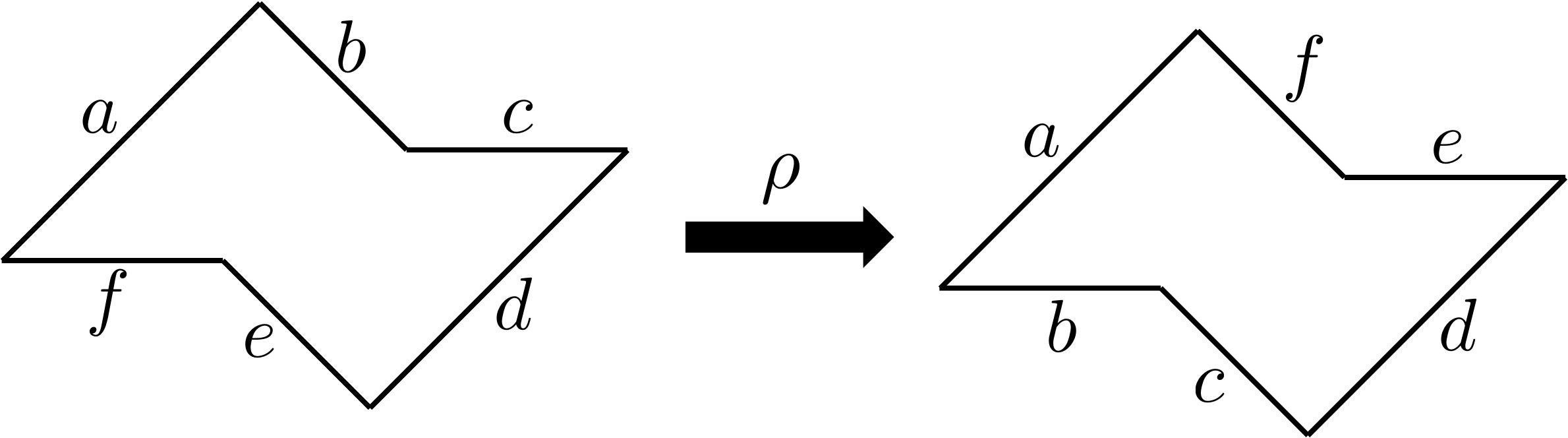}
    \caption{\small The action of $\rho = (03)(12)$ on hexagons.}
    \label{fig: rho}
   
\end{figure}

As a last case, note that $\gamma^{0}_n = (n-1,0, n-1, n-1, -1, n)$ and these are exactly the rotated Aztec dragons, $\rho(\sigma \gamma_0^{n})$ obtained by mutating along $213213\ldots$ an even number of steps in region II.

\end{subsection}

\begin{subsection}{Examples of Northeast Aztec Castles}

We begin with examples of Aztec integer dragons. The last two cluster variables corresponding to alcove $(0,1)$ are associated to the Aztec castles $D_1 = \sigma \gamma^{1}_{0} = (0,0,0,1,1-1)$ and $\gamma^{1}_0 = (1,1,-1,0,0,0)$, as in Figure \ref{fig: sigma D1}.  Then, we also have the two cluster variables corresponding 
to alcove $(1,1)$, i.e. $\gamma^{0}_{1} = (1,0,0,0,-1,1) = \rho\sigma\gamma^1_0$ and $\sigma\gamma^{0}_{1} = (0,-1,1,1,0,0) = \rho\gamma^1_0$ in Figure \ref{fig: sigmarho D1}.

\begin{figure}[H]
    \centering
    \includegraphics[keepaspectratio=true, width=50mm]{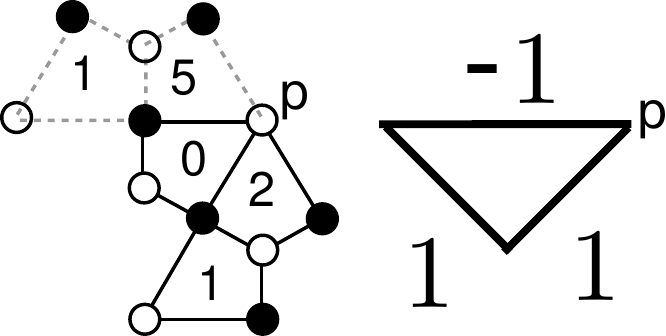} \hspace{4em}
    \includegraphics[keepaspectratio=true, width=50mm]{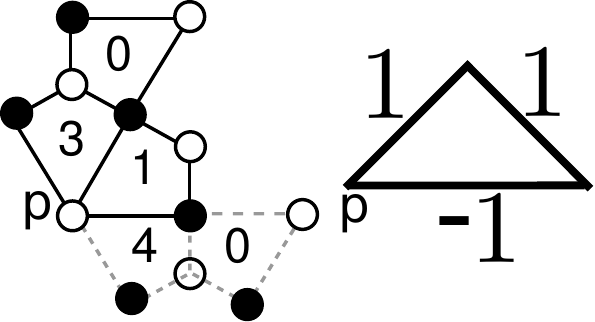}
    \caption{\small The castles $\sigma\gamma^{1}_{0}$ (also known as the Aztec dragon $D_{1}$) and $\gamma_0^{1}$.}
     \label{fig: sigma D1}
\end{figure} 

\begin{figure}[H]
    \centering
    \includegraphics[keepaspectratio=true, width=50mm]{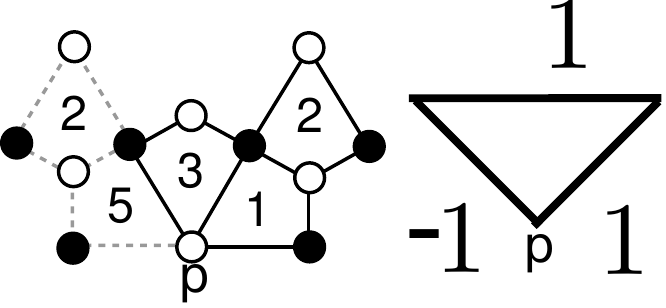} \hspace{4em}
    \includegraphics[keepaspectratio=true, width=50mm]{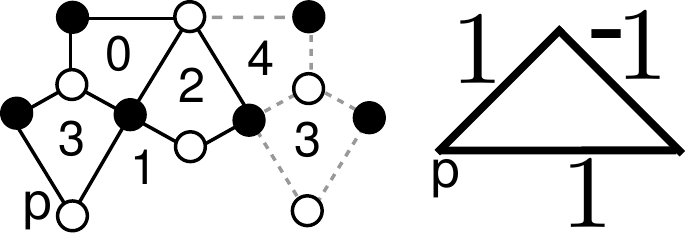}
    \caption{\small The castles $\sigma\gamma_1^{0} = \rho\gamma_0^1$ and $\gamma_1^{0} = \rho\sigma\gamma_0^1$, which are a rotation and reflection of $D_1$.  These correspond to even alcoves in region II.}
     \label{fig: sigmarho D1}
\end{figure} 

\noindent We also show $D_2 = \sigma \gamma^{2}_0 = (1,1,0,2,2,-1)$, which corresponds to the penultimate cluster variable for the even alcove $(0,2)$, with its associated hexagonal notation in Figure \ref{fig: sigma D2}.

\begin{figure}[H]
    \centering
    \includegraphics[keepaspectratio=true, width=90mm]{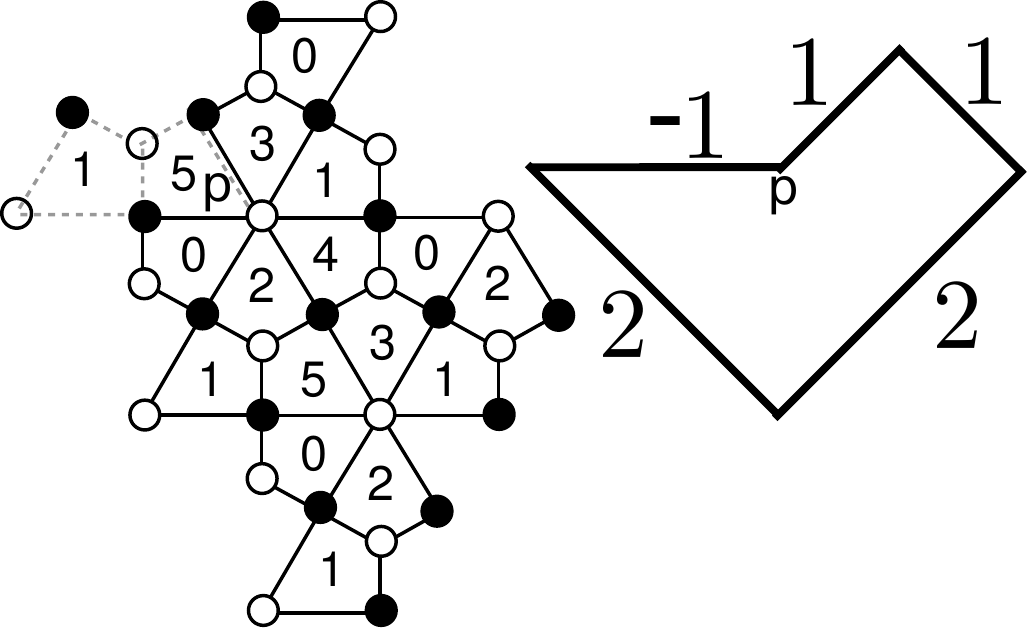}
    \caption{\small The castle $\sigma\gamma^{2}_{0}$, also known as the Aztec dragon $D_{2}$.}
     \label{fig: sigma D2}
\end{figure} 

Next, we also illustrate the Aztec castles $\gamma_{1}^1$ and $\gamma_{3}^3$ in Figures \ref{fig:NewEg} and \ref{fig:hexex}.

\begin{figure}[H]
    \centering
    \includegraphics[keepaspectratio=true, width=100mm]{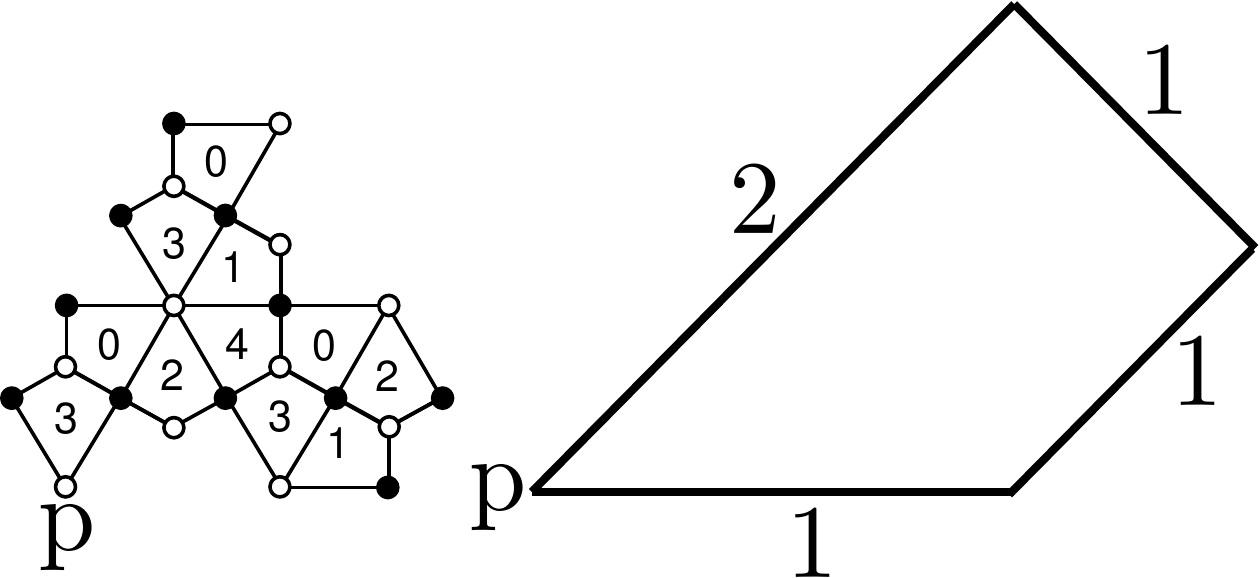}
    \caption{\small The castle $\gamma^{1}_{1}$ as a subgraph of the dP3 brane tiling with its corresponding hexagonal notation.  Notice that since this is on the intersection of regions I and II, it is symmetric with respect to $\rho$, which is a reflection about a line of slope $-\sqrt{3}/3$, i.e. rotation $\sigma$ applied to reflection about a line of slope $\sqrt{3}$.}
    \label{fig:NewEg}
\end{figure}

\begin{figure}[H]
    \centering
    \includegraphics[keepaspectratio=true, width=80mm]{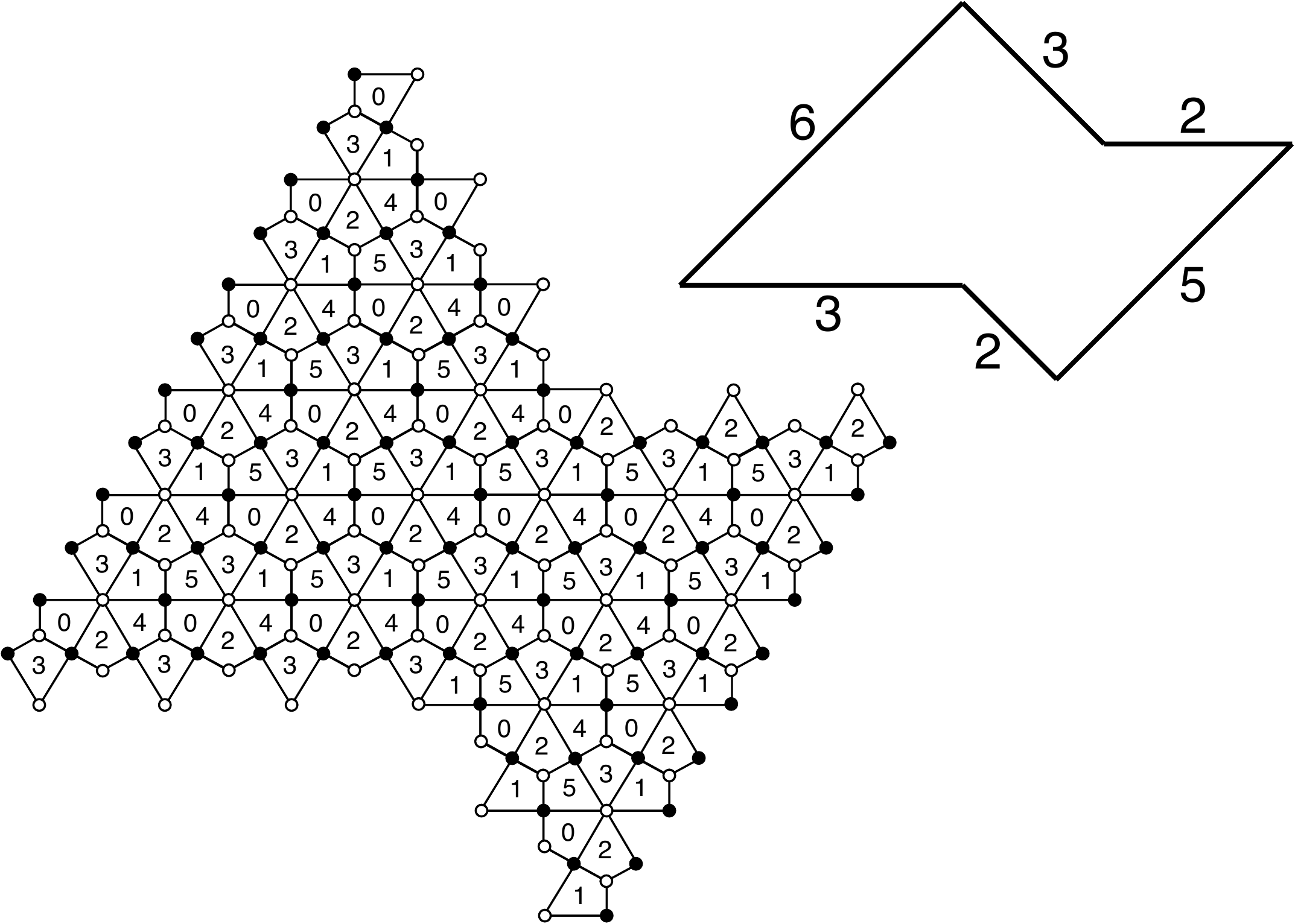}
    \caption{\small The castle $\gamma^{3}_{3}$ as a subgraph of the dP3 brane tiling with its corresponding hexagonal notation.  Note that 
the even alcove $(3,3)$ would correspond instead to $\sigma\gamma^3_3$ since $3\equiv 0 \mod 3$.}
    \label{fig:hexex}
\end{figure}

\end{subsection}

\begin{subsection}{The Northeast Cone Theorem}
\label{sec:intcone}

Now we state the main result of this section in full detail. Theorem \ref{thm:intcone} gives the correspondence between alcoves of the NE cone (region I) and NE Aztec castles. 

\begin{theorem}
\label{thm:intcone}
(1) For $0 \leq i \leq j$, let $(i,j)$ label an even alcove in the NE cone (using the coordinates from Section \ref{sec:introduction}).  Let $\tau_{a_1}\tau_{a_2}\cdots \tau_{a_{2i+2j}}$ denote 
the canonical path from the origin to this alcove (which takes $2j$ diagonal steps followed by $2i$ horizontal steps).  The mutation $\tau_{a_{2i+2j}} = \mu_{2k-2}\circ \mu_{2k-1}$ for $a_{2i+2j}=k \in \{1,2,3\}$.  Let 
$y_{i,j}$ and $y_{i,j}'$ denote the penultimate and final cluster variables reached by this mutation sequence, respectively.

Then for $(i,j) \not = (0,0)$, $y_{i,j} = 
\begin{cases}  
c(\sigma\gamma_i^j) &\mathrm{~if~}i \equiv 0 \mod 3  \\ c(\gamma_i^j) &\mathrm{~otherwise}\end{cases}$, and 

$y_{i,j}' = 
\begin{cases} 
c(\gamma_i^j) &\mathrm{~if~}i \equiv 0 \mod 3  \\ c(\sigma\gamma_i^j) &\mathrm{~otherwise}\end{cases}$.

(2) For $1 \leq i \leq j$, let $\{i,j-1\}$ label an odd alcove in the NE cone.  Let $\tau_{a_1}\tau_{a_2}\cdots \tau_{a_{2i+2j-1}}$ denote 
the canonical path from the origin to this alcove (which takes $2j$ diagonal steps followed by $2i-1$ horizontal steps).  This time, let  
$z_{i,j-1}$ and $z_{i,j-1}'$ denote the penultimate and final cluster variables reached by this mutation sequence, respectively. 

Then $z_{i,j-1} = 
\begin{cases} c(\gamma_i^{j-1}) &\mathrm{~if~}i \equiv 2 \mod 3, \mathrm{~ and} \\ c(\sigma\gamma_i^{j-1}) &\mathrm{~otherwise}\end{cases}$, and

$z_{i,j-1}' = 
\begin{cases} c(\sigma\gamma_i^{j-1}) &\mathrm{~if~}i \equiv 2 \mod 3, \mathrm{~ and} \\ c(\gamma_i^{j-1}) &\mathrm{~otherwise}\end{cases}$.
\end{theorem}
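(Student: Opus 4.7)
The plan is to prove Theorem~\ref{thm:intcone} by induction on the length of the canonical path to the target alcove, combining two ingredients: Zhang's Aztec dragon identification as the base case along the $i = 0$ column, and Kuo's graphical condensation theorem \cite{kuo, speyer} to propagate the identification horizontally across the NE cone. The tight explicit description of cluster variables provided by Proposition~\ref{thm:factor} and Theorem~\ref{thm:alcove} pins down exactly what each intermediate identity must look like, and so reduces the proof to verifying a sequence of hexagonal bookkeeping identities.

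\textbf{Base case.} First I would unpack Definition~\ref{def:integer castle} to check that $\sigma\gamma_0^j = (j-1, j-1, 0, j, j, -1)$ agrees with the $j$-th Aztec dragon $D_j$ of \cite{CY, zhang}, and that the odd-alcove castles $\gamma_1^{j-1}$ correspond to the half-integer dragons $D_{j - 1/2}$. Zhang's main theorem then gives Theorem~\ref{thm:intcone}(1) for $i = 0$ and Theorem~\ref{thm:intcone}(2) for $i = 1$; in particular the parity of $j$ controls whether a $\sigma$ is applied, matching the case split in the theorem statement.

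\textbf{Inductive step via Kuo condensation.} Fix $(i, j)$ in the NE cone and assume the theorem for all alcoves reached by strictly shorter canonical paths. The horizontal step from $(i, j)$ to $(i+1, j)$ applies some $\tau_k = \mu_{2k-2} \circ \mu_{2k-1}$, updating the two cluster variables at vertices $2k-2$ and $2k-1$ via binomial exchange relations of the form $y_{\text{new}} \cdot y_{\text{old}} = M_1 + M_2$, whose right-hand monomials are products of four other current cluster variables. By induction each of those four variables is $c(\cdot)$ of an Aztec castle, so it suffices to produce a Kuo identity
\[
c(\gamma_{i+1}^{j})\, c(\widetilde G) \;=\; c(G_1)\, c(G_2) \;+\; c(G_3)\, c(G_4),
\]
in which $\widetilde G, G_1, G_2, G_3, G_4$ are the Aztec castles (or their $\sigma$-images) predicted by the exchange relation and Theorem~\ref{thm:alcove}. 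I would choose the four Kuo vertices at four of the six ``hexagonal corners'' of $\gamma_{i+1}^{j}$, namely white vertices at the junctions of the subpaths $P_\ell$ in Figure~\ref{fig: boundarypaths}, selected so that removing opposite pairs truncates the tuple $(a, b, c, d, e, f)$ in precisely the way that shifts $(i+1, j)$ to its Coxeter-lattice neighbors. Reading off the resulting six-tuples and checking the closure conditions~(\ref{eq:closed_curve1})--(\ref{eq:closed_curve2}) verifies that each sub-graph is exactly the Aztec castle dictated by the $\tau$-presentation.

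\textbf{Covering monomial step and conclusion.} Kuo's theorem only gives the weight identity $w(G) w(G') = w(G_1) w(G_2) + w(G_3) w(G_4)$, so I would separately verify the corresponding monomial identity $m(\gamma_{i+1}^j)\, m(\widetilde G) = m(G_1)\, m(G_2) = m(G_3)\, m(G_4)$, which follows by direct face-counting since in each pairing the two castles border the same multiset of brane tiling faces. Multiplying the weight and covering-monomial identities then upgrades the Kuo relation to the desired $c(\cdot)$-exchange. Matching the leading terms $x_{f_k(n)}$ from Proposition~\ref{thm:factor} and checking the $\sigma$-labels against the residue of $i \bmod 3$ closes the induction, and the odd-alcove case~(2) follows by running the same argument with the canonical path ending one horizontal step earlier. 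The main obstacle I anticipate is not conceptual but combinatorial: choosing the four Kuo corners consistently across the three residue classes of $i \bmod 3$, across even versus odd alcoves, and across cases where some tuple entries become $-1$ (cf.\ Remark~\ref{rem:neghex}), and checking in each case that the four truncated hexagons match the Aztec castles forced by Theorem~\ref{thm:alcove}. This is an intricate but mechanical case analysis, pinned down uniquely by the explicit formulae already established in Section~\ref{sec:factor}.
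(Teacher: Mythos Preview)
Your overall architecture matches the paper's: establish the $i=0$ column via Zhang's Aztec dragons, prove a Kuo/Speyer condensation identity on $\gamma_i^j$ whose pieces are smaller castles, prove a companion covering-monomial identity, and verify that their product reproduces the cluster exchange relation at each horizontal step. The paper organizes the induction by rows $j$ rather than by total path length, and formulates the condensation as Speyer's nine-region vertex partition rather than a choice of four corner vertices, but these are cosmetic differences.

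Two genuine corrections are needed. First, your base case for the odd alcoves is wrong: the half-integer Aztec dragons $D_{j-1/2}$ are \emph{not} the NE castles $\gamma_1^{j-1}$ --- they live in the SW cone as $\tilde\gamma_0^{-j}$ (see Section~\ref{sec:new-half-int}), and Zhang's result does not directly cover the odd alcoves of region~I. The paper instead proves a short parity lemma (Lemma~\ref{lem:oddsigma}) showing that the cluster variables at an odd alcove $\{i,j-1\}$ coincide, up to $\sigma$, with those at the even alcove $(i,j-1)$ one row down; this reduces part~(2) to part~(1) on the previous row and is what you should substitute for the half-integer-dragon claim. Second, your covering-monomial identity is off by a full six-cycle: the correct relation (Lemma~\ref{lem:int cov mon}) reads
\[
m(\gamma_i^j)\,m(\sigma\gamma_{i-1}^{j-1}) \;=\; (x_0x_1x_2x_3x_4x_5)\,m(\sigma\gamma_i^{j-1})\,m(\gamma_{i-1}^{j}) \;=\; (x_0x_1x_2x_3x_4x_5)\,m(\gamma_i^{j-1})\,m(\sigma\gamma_{i-1}^{j}),
\]
and this extra monomial is precisely what cancels the $1/(x_0\cdots x_5)$ factor appearing in the weight relation (Lemma~\ref{lem:genweight}), so that the $c(\cdot)$-identity comes out clean. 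You will also need the paper's separate treatment, via the reflection $\rho$ of Remark~\ref{rmk:rho}, of the boundary odd alcove $\{j,j-1\}$, where the horizontal induction runs out of room and the castle must instead be identified through the symmetry with region~II.
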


\end{subsection} 

\begin{subsection}{Condensations on Northeast Aztec Castles}

The work of detailing recursions on the weights of NE Aztec castles will rely on a method known as graphical condensation which was first introduced by Kuo in \cite{kuo} and further generalized by Speyer \cite{speyer}. We will be utilizing Speyer's reformulation of the condensation theorem multiple times throughout this section as well as in Section \ref{sec:new-half-int}, so it is useful to provide a brief review of the necessary facts. 

\begin{subsubsection}{The Condensation Theorem}
The condensation theorem relates the weights of a graph $G$ to smaller sections of that same graph by dividing G into nine disjoint sets. This decomposition will form the basis of the recursions on the weights of Aztec castles.  \\ 

\noindent Partition the vertices of a planar bipartite graph $G$ into nine distinct sets:
\[
V(G) = C \sqcup N \sqcup S \sqcup W \sqcup E \sqcup NE \sqcup NW \sqcup SE \sqcup SW
\]
\noindent and assume the following conditions hold:
\begin{enumerate}
\item  \label{cond1} The edge connections between the nine sets agree with those illustrated in Figure \ref{fig:condthm}. Thus, there cannot be any edge connecting N, S, E, or W to C and other similar restrictions. 

\item \label{cond2} The boundary vertices must be black in the SW and NE sectors and white in the SE and NW sectors. 

\item \label{cond3} N, S, W, E, and C contain the same number of black and white vertices. In SW and NE, the number of black vertices is one more than the number of white vertices and in SE and NW the number of white vertices is one more than the number of black vertices.
\end{enumerate}

\noindent Then:
\[
\begin{split}
w(G)w(C) &= w(N \cup NE \cup NW \cup C)w(S \cup SE \cup SW \cup C)w(E)w(W) \\ &+ w(E \cup NE \cup SE \cup C)w(W \cup NW \cup SW \cup C)w(N)w(S)
\end{split}
\]
\begin{figure}[h]
    \centering
    \includegraphics[keepaspectratio=true, width=100mm]{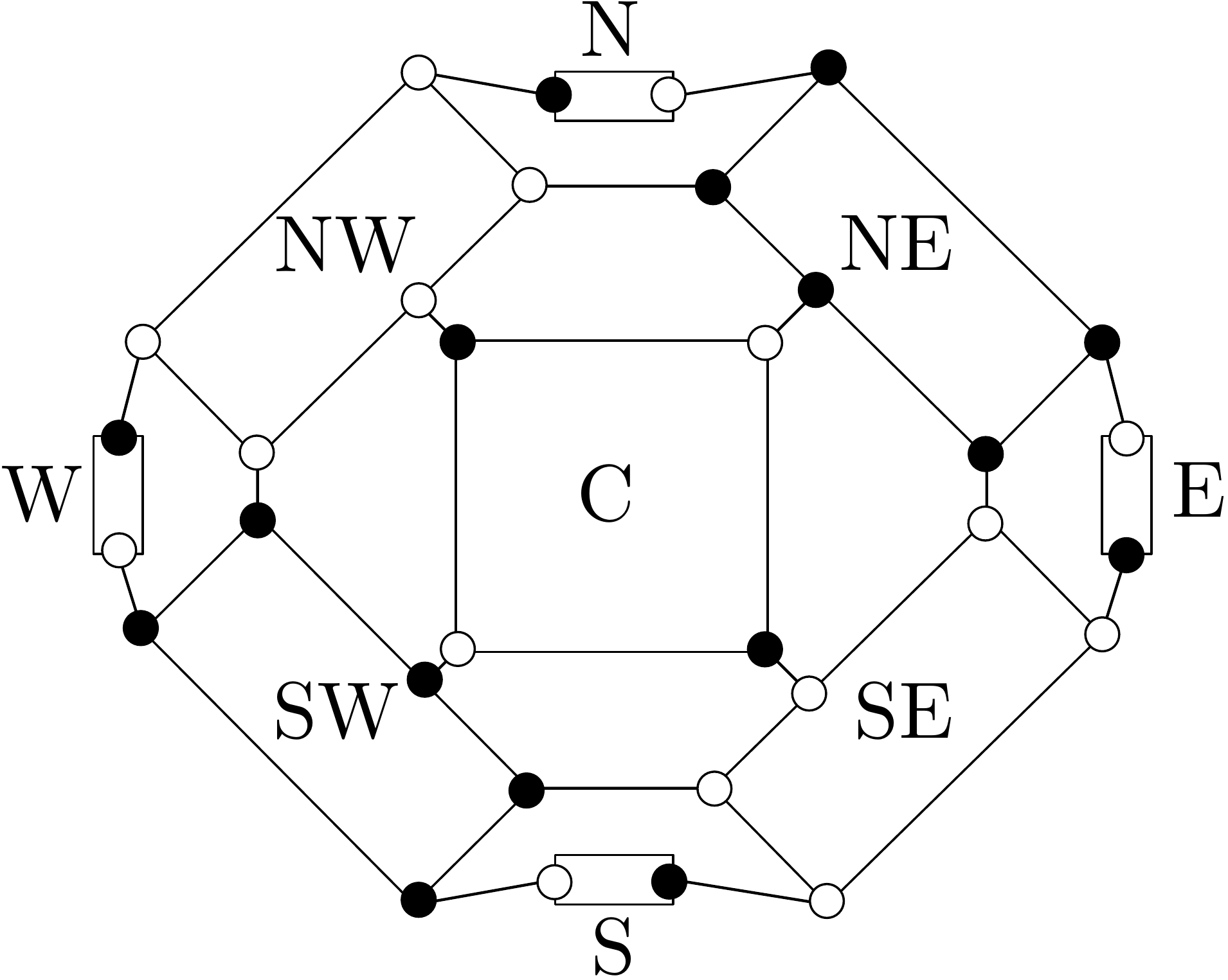}
	\caption{\small The possible allowed edge connections between the nine sets of vertices in a successful application of the condensation theorem. Boundary vertices must be white for the NW and SE sectors and black for the NE and SW sectors.  }
	\label{fig:condthm}
\end{figure}

\end{subsubsection}

\begin{subsubsection}{General Weight Relation}
\label{sec:genweight}

\begin{lemma}
\label{lem:genweight}
For $1 \leq i \leq j$, the following relation holds.
\begin{equation*}
\begin{split}
w(\gamma^{j}_i) w(\sigma \, \gamma^{j - 1}_{i-1}) = 
(w(\sigma \, \gamma^{j}_{i - 1}) w(\gamma^{j-1}_i) + w(\gamma^{j }_{i-1}) w(\sigma \, \gamma^{j-1}_{i})) \left( \frac{1}{x_0 x_1 x_2 x_3 x_4 x_5}\right). 
\end{split}
\end{equation*}

\end{lemma}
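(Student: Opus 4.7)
The plan is to apply Speyer's graphical condensation theorem directly to the Aztec castle $G = \gamma^{j}_{i}$, with the central region $C$ chosen to be an interior copy of $\sigma\gamma^{j-1}_{i-1}$ sitting inside $G$. The nine sets of the condensation decomposition should be arranged so that the four ``half-graph'' unions of the theorem, $N \cup NE \cup NW \cup C$, $S \cup SE \cup SW \cup C$, $E \cup NE \cup SE \cup C$, and $W \cup NW \cup SW \cup C$, become, in some order, copies of $\sigma\gamma^{j}_{i-1}$, $\gamma^{j-1}_{i}$, $\gamma^{j}_{i-1}$, and $\sigma\gamma^{j-1}_{i}$ respectively. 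The four cardinal-direction pieces $N, S, E, W$ will each be single edges (or small fixed collections of edges) of the brane tiling, sitting at the corners of $G$ that are left over after cutting out $C$ and the four half-graphs from $G$.

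First, I would use the hexagonal description $\gamma^{j}_{i} = (i+j,\,j,\,i-1,\,i+j-1,\,j-1,\,i)$ to locate $C$: applying $\sigma$ (the $180^\circ$ rotation of the brane tiling) to the tuple $(i+j-1,\,j-1,\,i-2,\,i+j-2,\,j-2,\,i-1)$ and placing it centrally inside $G$, one verifies by the definition of the hexagonal notation that the shift of each parameter by $1$ corresponds to peeling off the outer layer along the six boundary paths $P_1,\ldots,P_6$. Then I would identify the remaining vertices in the ``outer frame'' $G \setminus C$ with the sets $N, S, E, W, NE, NW, SE, SW$ so that the edge-adjacency picture of Figure~\ref{fig:condthm} is forced by the zig-zag structure of the $P_k$. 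The half-graph unions $C \cup N \cup NE \cup NW$ etc.\ should then precisely match the hexagonal-notation descriptions of $\sigma\gamma^{j}_{i-1}$, $\gamma^{j-1}_{i}$, $\gamma^{j}_{i-1}$, $\sigma\gamma^{j-1}_{i}$: each of these subgraphs has exactly one parameter decremented relative to $\gamma^{j}_{i}$, and the $\sigma$ or lack of $\sigma$ corresponds to which pair of opposite corners we choose to include.

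Next, I would verify the three hypotheses of the condensation theorem: the edge-adjacency constraint between sectors (which follows from the local structure of the brane tiling once $N,S,E,W$ are chosen as single edges at the four ``concave'' corners of the hexagonal boundary), the black/white colouring constraint on boundary vertices (which is forced by bipartiteness together with the position of the basepoint), and the black vs.\ white vertex count in each sector (which is a counting argument on the zig-zag boundary paths $P_1,\ldots,P_6$, using equations (\ref{eq:closed_curve1}) and (\ref{eq:closed_curve2})). Finally, I would compute the weights $w(N), w(S), w(E), w(W)$: since each is a single brane tiling edge between a pair of labelled faces, their product $w(E)w(W)$ and $w(N)w(S)$ should each, after tracing through which six face labels appear around the four cardinal corners of $G$, equal $\frac{1}{x_0 x_1 x_2 x_3 x_4 x_5}$. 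Pulling this common factor out of both terms yields the claimed identity.

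The main obstacle will be the bookkeeping in the second step: correctly positioning $C = \sigma\gamma^{j-1}_{i-1}$ inside $G = \gamma^{j}_{i}$ and identifying the four half-graph unions with the correct sub-castles up to $\sigma$. Because the hexagonal notation uses oriented boundary paths and signs, and because the ``shifted'' sub-castles attach to $C$ through very specific edges of the brane tiling, one has to check that cutting $G$ as prescribed really recovers the four named sub-castles with the intended basepoints. Once this geometric identification is in place, the rest of the argument is a direct application of Speyer's theorem together with the weight calculation for the cardinal edges. Boundary cases where one of $i-1$ or $j-1$ equals $0$ (so that the conventions of the negative-entry case from Remark \ref{rem:neghex} come into play for some of the sub-castles) will need to be checked separately, but the same decomposition strategy should still work with only minor adjustments to the shape of $N,S,E,W$.
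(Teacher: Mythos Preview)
Your overall strategy is the same as the paper's: apply Speyer's condensation theorem to $G=\gamma^{j}_{i}$ with central region $C=\sigma\gamma^{j-1}_{i-1}$, and identify the four ``half-graph'' unions with $\gamma^{j}_{i-1}$, $\sigma\gamma^{j}_{i-1}$, $\gamma^{j-1}_{i}$, $\sigma\gamma^{j-1}_{i}$. That part of your plan is correct and matches the paper exactly.

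The genuine gap is in your description of the cardinal sectors $N,S,E,W$. You propose that each of these is a single brane-tiling edge sitting at a corner of $G$, and that the products $w(E)w(W)$ and $w(N)w(S)$ each equal $\frac{1}{x_0x_1x_2x_3x_4x_5}$. This cannot work arithmetically: a single edge has weight $\frac{1}{x_ax_b}$, so a product of two such weights involves at most four face labels, never all six. In the paper's actual decomposition, two of the cardinal sectors, $E$ and $S$, are \emph{empty} (so $w(E)=w(S)=1$), while the other two, $N$ and $W$, are each small connected subgraphs left over after removing $C$ and the four corner regions from $G$. Each of $N$ and $W$ turns out to carry a unique perfect matching of three edges whose six bordering faces are exactly $0,1,2,3,4,5$, giving $w(N)=w(W)=\frac{1}{x_0x_1x_2x_3x_4x_5}$. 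That is how the factor in the lemma arises; it comes from \emph{one} nonempty cardinal sector per term, not two.

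A secondary point: your six-tuple for $\sigma\gamma^{j-1}_{i-1}$ is off. The paper places $C$ at a shifted basepoint $p_0$ (the endpoint of $P_1^1P_3^1$ from the basepoint $p$ of $G$) and records it as $(i+j-3,\,j-2,\,i-1,\,i+j-2,\,j-1,\,i-2)_{p_0}$; decrementing all six parameters of $\gamma^{j}_{i}$ by $1$, as you suggest, does not give a castle of the form $\gamma_{i'}^{j'}$ or $\sigma\gamma_{i'}^{j'}$. The correct placement of $C$ is what forces the asymmetry: only two of the four ``gaps'' between the corner blocks and $C$ are nonempty. Once you fix the position of $C$ and accept that $E,S=\varnothing$ while $N,W$ are three-edge fragments, the rest of your outline (verifying conditions (1)--(3), handling the $i=1$ degeneration, and treating the base case $(i,j)=(1,1)$ separately) goes through exactly as the paper does it.
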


\begin{proof}

The case $(i,j) = (1,1)$ is a base case that is verified separately using a diagram in Section \ref{sec:proofintcone}, see Figure \ref{fig: partition_firstwedge}.

Suppose $j \geq 2$.  We define a Kuo condensation on $\gamma^{j}_i(p) = (i+j, j, i-1, i+j-1, j-1, i)_p$ as follows. Recall that $V(G)$ denotes the set of vertices of a graph $G$. Let $p_0$ be the endpoint of the path $P_1^1P_3^1$ beginning at the point $p$, let $p_1$ be the endpoint of the path $P_1^1$ beginning at the point $p$, and let $p_2$ be the endpoint of the path $P_3^1$ beginning at the point $p$. 

\begin{itemize}
\item $C = V(\sigma \gamma^{j - 1}_{i-1}(p_0)) = V(i+j-3,j-2,i-1,i+j-2,j-1,i-2)_{p_0}$.
\item $SW = V((i+j-2,j-2,i-1,i+j-2,j-2,i-1)_{p_2}) - V(C)$
\item $NW = V((i+j-2,j-1,i-1,i+j-2,j-1,i-1)_{p_1} - V(C)$
\item $NE = V((i+j-2,j-1,i-2,i+j-2,j-1,i-2)_{p_0}) - V(C)$
\item $SE = V((i+j-3, j-2,i,i+j-1,j,i-2)_{p_0}) - V(C)$
\item The induced subgraph on the remaining vertices has exactly two connected components. The vertices of the northernmost component are labeled $N$ and the vertices of the remaining component are labeled $W$. 
\item Regions $E$ and $S$ are empty.
\end{itemize}

See Figure \ref{fig:int_cone_CM} for a schematic of this decomposition.  Note that since $1\leq i \leq j$ and $j \geq 2$, each of these $6$-tuples either contains exclusively nonnegative parameters or at worst a $-1$ in the last coordinate of $C$ or the third and sixth coordinates of $NE$, which we use to trace out a subpath backwards.  
See Figure \ref{fig:int_subcase_CM} for this case.  We observe that

\begin{itemize}

\item $Q_1 := S \cup SW \cup SE \cup C = (i+j - 2, j-2, i, i+j-1, j-1, i-1) = \sigma \gamma^{j - 1}_{i}$  
\item $Q_2 := W \cup SW \cup NW \cup C = \gamma^{j - 1}_{i}$
\item $Q_3:= N \cup NW \cup NE \cup C = (i+j-1, j, i-2, i+j-2, j-1, i-1) = \gamma^{j}_{i-1}$
\item $Q_4 := E \cup NE \cup SE \cup C = \sigma \gamma^{j}_{i-1}$

\end{itemize}

As above, $Q_1$, $Q_2$, $Q_3$, and $Q_4$ contain exclusively nonnegative parameters, except for the cases of $Q_3$ and $Q_4$, which contain $-1$'s in the third (resp. sixth coordinate) when $i=1$.  However, in such cases, $Q_3$ and $Q_4$ are simply integer-order Aztec dragons, which agree with their coordinates as $\gamma_0^j$ or $\sigma \gamma_0^j$.

Thus the lemma follows if we can verify that the requirements of the condensation theorem are satisfied.  We provide an image of the general shape of the condensation, but leave the straightforward but tedious details verifying conditions (\ref{cond1})-(\ref{cond3}) to the reader.

\begin{figure}[H]
    \centering
    \includegraphics[keepaspectratio=true, width=100 mm]{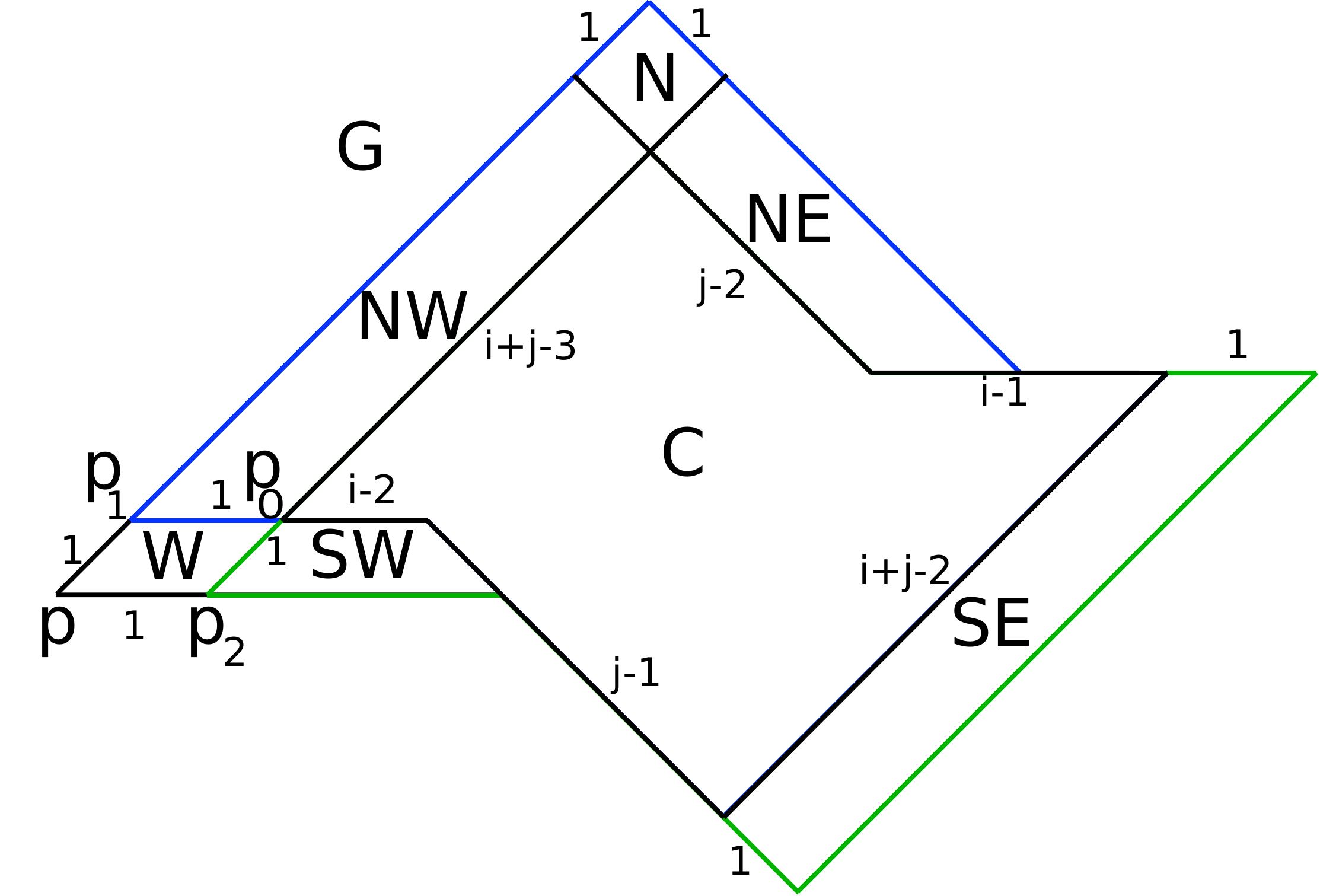}
    \caption{\small Schematic of condensation for $G = \gamma_{i}^j$ for $i, j \geq 2$.}
    \label{fig:int_cone_CM}
\end{figure}

\begin{figure}[H]
    \centering
    \includegraphics[keepaspectratio=true, width=80 mm]{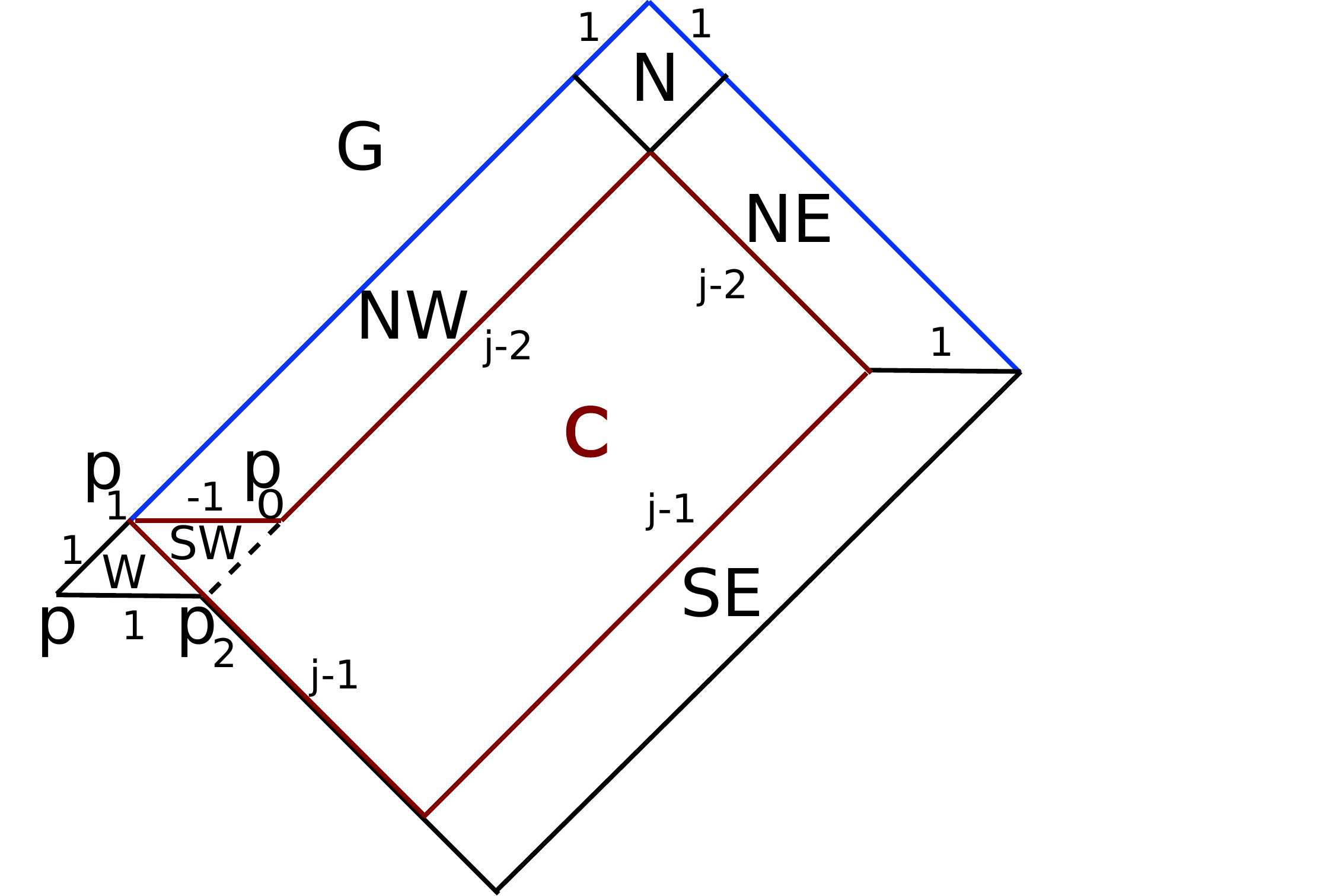}
    \caption{\small For the case $G = \gamma_{1}^j$, for $j\geq 2$, the same scheme essentially works, but notice that 
we will be going backwards along a subpath labeled by $-1$.  In particular, schematically, region $SW$ now 
looks like it is inside $C$, but that is an artifact of our notation.  In particular, the subgraphs $Q_1$, $Q_2$, $Q_3$, and $Q_4$ still fit in 
as desired.}
    \label{fig:int_subcase_CM}
\end{figure}

An example of this condensation is given in Figure \ref{fig:firstdecomp}.

\begin{figure}[H]
    \centering
    \includegraphics[keepaspectratio=true, width=100 mm, angle=270]{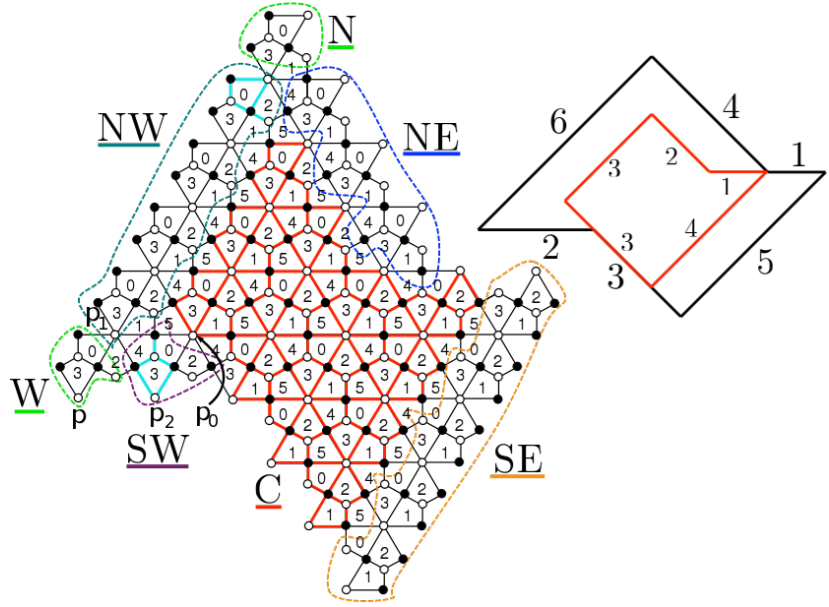}
    \caption{\small Example of condensation for the case $G = \gamma_{2}^4$ with $C = \sigma\gamma_{1}^3$.}
    \label{fig:firstdecomp}
\end{figure}

\end{proof}

\end{subsubsection}
\end{subsection}

\begin{subsection}{Covering Monomial Relation}
\label{subsec:int cov mon}

The following lemma will be combined with the weight relation to prove Theorem \ref{thm:intcone}.

\begin{lemma}
\label{lem:int cov mon}
For all $1 \leq i \leq j$:
 \[
\begin{split}
m (\gamma^j_{i})m (\sigma\gamma^{j-1} _ {i-1}) &= (x_ 0x_{1}x_{2}x_{3}x_{4}x_ 5)m (\sigma\gamma^{j-1} _ {i}) m (\gamma^{j}_ {i-1}) \\  &= (x_ 0x_{1}x_{2}x_{3}x_{4}x_ 5) m(\gamma^{j-1} _ {i})m (\sigma\gamma^j_ {i - 1}) 
\end{split}
\]
\end{lemma}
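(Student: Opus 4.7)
The plan is to reduce the lemma to scalar identities on the exponent functions in the covering monomial, and then verify these by direct computation. Writing $m(\gamma_i^j) = \prod_k x_k^{n_k(i,j)}$ and using the fact that $\sigma = (01)(23)(45)$ acts as a $180^{\circ}$ rotation of the brane tiling that swaps each label with its antipode, we have $m(\sigma \gamma_i^j) = \prod_k x_k^{n_{\sigma(k)}(i,j)}$. Matching exponents of $x_k$ on both sides of each claimed equality then yields, for each $k \in \{0,\dots,5\}$,
\begin{equation*}
n_k(i,j) + n_{\sigma(k)}(i-1,j-1) = 1 + n_{\sigma(k)}(i,j-1) + n_k(i-1,j)
\end{equation*}
from the first equality, and
\begin{equation*}
n_{\sigma(k)}(i,j-1) + n_k(i-1,j) = n_k(i,j-1) + n_{\sigma(k)}(i-1,j)
\end{equation*}
from the second.

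To verify these, I would derive an explicit closed form for $n_k(i,j)$ by enumerating the faces of label $k$ enclosed in or bordering the hexagonal region $(i+j,\, j,\, i-1,\, i+j-1,\, j-1,\, i)$. The interior contribution is a quadratic polynomial in $i,j$ (since every fundamental domain of the dP3 brane tiling contains a fixed set of faces of each label), while the contribution from faces meeting the six zigzag boundary segments $P_1^{i+j}, P_2^j, P_3^{i-1}, P_4^{i+j-1}, P_5^{j-1}, P_6^i$ is linear in the corresponding side lengths, with small constant corrections depending on $(i,j) \bmod 3$, paralleling the three cases that appear in Theorem \ref{thm:alcove}. Writing $\Delta_k(i,j) := n_k(i,j) - n_{\sigma(k)}(i,j)$, the second identity is equivalent to $\Delta_k(i,j-1) = \Delta_k(i-1,j)$, i.e.\ $\Delta_k$ is constant along the antidiagonals $i+j = \text{const}$. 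Geometrically this reflects the translational symmetry of the brane tiling in a direction perpendicular to $i+j$: shifting the hexagonal region along this diagonal direction preserves the $\sigma$-asymmetric part of the face count. The first identity is a discrete mixed-second-difference statement, and the ``$+1$'' on its right-hand side corresponds to the single extra face of label $k$ that appears in the annular region $\gamma_i^j \setminus \sigma\gamma_{i-1}^{j-1}$ but is not accounted for by the overlap of the four smaller castles on the right.

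The main obstacle is the modular case analysis on $i,j \bmod 3$ introduced by the boundary corrections to $n_k$. However, both identities involve only a discrete mixed second difference of $n_k$ across a unit square in $(i,j)$-space, so the modular corrections cancel out and only the pure quadratic parts actually contribute to the comparison. Thus the bulk of the verification reduces to checking a handful of quadratic identities on the closed-form expressions for $n_k$. The edge cases in which the hexagonal notation involves a $-1$ entry (such as $i=1$ in $\gamma_1^j$, where the sixth coordinate drops to $-1$ in the smaller castles, or $i \equiv 0 \bmod 3$ where the presentation flips between $\gamma$ and $\sigma\gamma$) can be handled separately by direct inspection of the small-castle diagrams exhibited in Figures \ref{fig: sigma D1}, \ref{fig:NewEg}, and \ref{fig:hexex}, confirming that the face counts remain consistent in these boundary regimes.
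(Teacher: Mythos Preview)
Your approach is genuinely different from the paper's. The paper does not compute the exponent functions $n_k(i,j)$ at all; instead it reuses the Kuo condensation decomposition already established in Lemma~\ref{lem:genweight}. Writing $m'(G)$ for the multiset of faces enclosed in or bordering $G$, the paper observes that the condensation partition $\gamma_i^j = C \sqcup N \sqcup W \sqcup NE \sqcup NW \sqcup SE \sqcup SW$ (with $E = S = \emptyset$) gives $Q_1 \cap Q_3 = C = \sigma\gamma_{i-1}^{j-1}$ and $G \setminus (Q_1 \cup Q_3) = W$, where $W$ is a single $6$-cycle. The two multiset identities
\[
m'(Q_1)\cup m'(Q_3) \;\sqcup\; \{x_0,\dots,x_5\} = m'(\gamma_i^j), \qquad m'(Q_1)\cap m'(Q_3) = m'(C)
\]
then immediately yield the first equality; the second follows by the symmetric argument with $Q_2,Q_4$ and $N$ in place of $Q_1,Q_3$ and $W$. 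So the paper's proof is a short inclusion--exclusion on multisets of faces, piggybacking entirely on the geometric picture of Figures~\ref{fig:int_cone_CM} and~\ref{fig:int_subcase_CM}.

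Your computational route is in principle viable, but as written it is a plan rather than a proof, and the central simplification you rely on is not justified. You assert that the boundary corrections to $n_k$ are periodic in $(i,j)\bmod 3$ and that these corrections cancel in the mixed second difference; neither claim is verified, and the second is not automatic---periodic functions of $i$ and $j$ do not in general have vanishing discrete mixed differences unless they are additively separable, which you have not shown. The appeal to Theorem~\ref{thm:alcove} is also misplaced: those modular cases govern cluster variables, not the covering monomial face counts, and the relevant periods there are $6$ and $3$ rather than $3$ alone. If you want to salvage this approach you would need either to exhibit the closed forms for $n_k(i,j)$ explicitly and check the identities, or to argue structurally that the boundary contributions along each zig-zag side $P_\ell$ are linear in the side length (so that their mixed second difference vanishes identically). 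The paper's overlay argument sidesteps all of this by working directly with the geometry.
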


 \begin{proof}
The theorem can be proven by hand for $i,j = 1$. Hence assume $j \geq 2$ for the remainder of the proof.  Let $G$ be a subgraph of the brane tiling and let $m'(G)$ denote the set of quadrilaterals of the brane tiling included in and bordering $G$ (we represent the quadrilateral marked $k$ by $x_k$). Then we can rephrase Lemma \ref{lem:int cov mon} in this new notation as follows.
 \[ 
\begin{split}
m'(\gamma^j_i)\sqcup m'(\sigma\gamma^{j-1}_{i-1}) &= \{x_ 0, x_{1}, x_{2}, x_{3}, x_{4}, x_ 5\} \sqcup m'(\sigma\gamma^{j-1}_{i})\sqcup m'(\gamma^j_{i-1}) \\ &= \{x_ 0, x_{1}, x_{2}, x_{3}, x_{4}, x_ 5\} \sqcup  m'(\gamma^{j-1} _ {i}) \sqcup m'(\sigma\gamma^j_ {i - 1}) 
\end{split} 
\]

 Thus if for any $i,j$ we can overlap $m'(\sigma\gamma^{j-1}_{i})$ and $m'(\gamma^j_{i-1})$ such that
 $$
[m'(\sigma\gamma^{j-1}_{i})  \cup  m'(\gamma^j_{i-1})] \sqcup  \{x_ 0, x_{1}, x_{2}, x_{3}, x_{4}, x_ 5\} =  m'(\gamma^j_i)$$ 
and 
$$
m'(\sigma\gamma^{j-1}_{i})  \cap   m'(\gamma^j_{i-1}) = m'(\sigma\gamma^{j-1}_{i-1}),$$

\noindent then the lemma follows. 

The following argument is aided by the visuals provided in Figure \ref{fig:int_cone_CM} for the case when all side lengths are positive ($i,j \geq 2$) and Figure \ref{fig:int_subcase_CM} for the case when $i = 1$. 

Recall the condensation presented in Lemma \ref{lem:genweight}. We know $Q_3 = C \cup NW \cup N \cup NE = \gamma^{j}_{i-1}$ and $Q_1 = C \cup SW \cup SE = \sigma \gamma^{j - 1}_{i}$. Since the condensation partitions all vertices of $G = \gamma^{j}_i$, we also have $Q_1 \cap Q_3 = C = \sigma \gamma^{j - 1}_{i - 1}$. Verifying that the border squares of $Q_1$ and $Q_3$ only intersect at border squares of $C$, this implies $m'(Q_1) \cap m'(Q_3) = m'(C)$, which is precisely the second required condition above. Furthermore, $G - [Q_1 \cup Q_3] = W$, $m'(W)$ is disjoint from $m'(Q_1 \cup Q_3)$, and $m'(W) = \{ x_0, x_1, x_2, x_3, x_4, x_5\}$. This implies that $m'(\gamma^{j}_i) = m'(W) \sqcup (m'(Q_1) \cup m'(Q_3))$, which was the first required condition. We conclude that $m (\gamma^j_{i})m (\sigma\gamma^{j - 1} _ {i-1}) = (x_ 0x_{1}x_{2}x_{3}x_{4}x_ 5)m (\sigma\gamma^{j - 1} _ {i}) m (\gamma^j_ {i - 1})$. 

Since $N$ has the same shape as $W$, we apply the same reasoning using $Q_2 = W \cup SW \cup NW \cup C = \gamma^{j-1}_{i}$ and $Q_4 = E \cup NE \cup SE \cup C = \sigma \gamma^{j}_{i-1}$ to conclude $m (\gamma^j_{i})m (\sigma\gamma^{j - 1} _ {i-1}) = (x_ 0x_{1}x_{2}x_{3}x_{4}x_ 5) m(\gamma^{j - 1} _ {i})m (\sigma\gamma^j_ {i - 1})$. 

\end{proof}

\end{subsection}

\begin{subsection}{Proof of the Northeast Cone Theorem}
\label{sec:proofintcone}

Now we combine Lemmas \ref{lem:genweight}, \ref{lem:int cov mon}, and a final lemma handling odd alcoves to prove the main result of this section, Theorem \ref{thm:intcone}.

\begin{lemma}
\label{lem:oddsigma}

At the level of ordered sets, $\{y_{i,j}, y'_{i,j}\} = \{z'_{i,j},z_{i,j}\}$ if $i \equiv 1 \mod 3$ and $\{y_{i,j}, y'_{i,j}\} = \{z_{i,j},z'_{i,j}\}$ otherwise. The same 
condition holds if we mutate along canonical paths when $i>0$.

\end{lemma}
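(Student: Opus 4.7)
The plan is to reduce the lemma to an analysis of the last $\tau$-mutation in each of the two canonical paths, and then to exploit the factorization phenomenon from Proposition~\ref{thm:factor}. First I would verify by case analysis on $(i,j) \bmod 3$ that both canonical paths---the one to the even alcove $(i,j)$ and the one to the odd alcove $\{i,j\}$ (under the convention of Theorem~\ref{thm:intcone} Part (2) applied with shifted index)---terminate with the same $\tau_k$. This follows by inspecting the horizontal patterns $(213213\ldots)$, $(132132\ldots)$, and $(321321\ldots)$ from Section~\ref{subsec:cox}, together with the observation that the terminal step depends only on $i \bmod 3$ once the cyclic shift induced by $j \bmod 3$ is accounted for. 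As a consequence, $y_{i,j}, y'_{i,j}, z_{i,j}, z'_{i,j}$ all sit at the same pair of vertices $\{2k-2, 2k-1\}$.

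Next, Proposition~\ref{thm:factor} gives each such cluster variable the form $x_\star A^{a_k}B^{b_k}C^{c_k}$, where the monomial factor is shared across the pair and $x_\star \in \{x_{2k-2}, x_{2k-1}\}$. Both sides of the claimed identity are therefore pairs of the same shape, and the problem reduces to (a) matching the monomial factor between the two alcoves and (b) determining the ordering of the two leading terms.

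For (a), I would show that in the Coxeter group $\tilde{A}_2$ the alcoves $(i,j)$ and $\{i,j\}$ are adjacent, joined by right-multiplication by a single generator $\tau_\ell$ with $\ell \neq k$ (where $\ell$ is determined by $j \bmod 3$). This is checked by applying the braid relations $\tau_i \tau_j \tau_i = \tau_j \tau_i \tau_j$ to convert the theorem's canonical path for $\{i,j\}$ into the canonical path for $(i,j)$ followed by $\tau_\ell$, exactly as one verifies by hand in the $(1,1)$ case using $\tau_1 \tau_2 \tau_3 \tau_1 \tau_3 = \tau_1 \tau_2 \tau_1 \tau_3 \tau_1$, yielding $\ell = 1$. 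Because $\tau_\ell$ does not mutate pair $k$, the unordered pair of variables at vertices $2k-2$ and $2k-1$ must agree between the two clusters, forcing equality of the shared monomial factor.

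For (b), the leading term at vertex $2k-2$ is controlled by the parity of $\eta_k$, counting the $\tau_k$-occurrences along each canonical path. A direct count based on the $\beta$-cycle decomposition of the diagonal together with the horizontal pattern shows that these two parities coincide precisely when $i \not\equiv 1 \pmod 3$ and disagree when $i \equiv 1 \pmod 3$. The former yields $(y_{i,j}, y'_{i,j}) = (z_{i,j}, z'_{i,j})$, while the latter produces the swap $(y_{i,j}, y'_{i,j}) = (z'_{i,j}, z_{i,j})$, matching the lemma's statement. The main obstacle I anticipate is a uniform execution of step~(a) across all residue classes, which amounts to a finite but somewhat delicate braid-relation bookkeeping once $j \bmod 3$ and the connecting generator $\tau_\ell$ are fixed; the parity count in~(b) is then straightforward.
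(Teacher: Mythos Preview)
Your proposal is correct and follows essentially the same approach as the paper: both arguments hinge on the adjacency of $(i,j)$ and $\{i,j\}$ via a single $\tau_\ell$ with $\ell\neq k$, combine this with the factorization phenomenon to match the monomial parts, and then settle the ordering of the leading terms by a parity count of $\tau_k$-occurrences along the two canonical paths. The only cosmetic difference is that you justify the adjacency algebraically via the braid relation (your worked example $\tau_1\tau_2\tau_3\tau_1\tau_3=\tau_1\tau_2\tau_1\tau_3\tau_1$), whereas the paper simply reads it off the lattice picture and writes out the two clusters explicitly; either route is fine.
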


\begin{proof}

We show this for each row of the NE cone. Suppose that $j \equiv 2 \mod 3$. The remaining cases follow with the same strategy applied. We first show this explicitly for the odd alcove $\{1,j-1\}$. We represent the cluster at the even alcove $(i,j)$ (resp. odd alcove $\{i,j\}$) by $\mathcal{C}_{(i,j)} = \{ c_0, c_2, c_4 \}$ (resp. $\mathcal{C}_{\{i,j\}} = \{ c_0, c_2, c_4 \}$), where $c_i$ denotes the set of cluster variable at vertex $v_i$. Note that the path to the even alcove $(1, j - 1)$ is given by $\beta^{\frac{2}{3}(j-2)}12|13$ and the path to the odd alcove $\{1,j-1\}$ is given by $\beta^{\frac{2}{3}(j-2) + 1} 1|3$. Thus we have 
\[
\begin{split}
\mathcal{C}_{\{1,j - 1\}} &= \{ y_{0,j}, y_{0,j-1}, z_{1,j-1}  \}; \\
\mathcal{C}_{(1, j - 1)} &= \{ z_{1,j-2} , y_{0,j - 1} , y_{1,j-1} \} 
\end{split}
\] 
By the relations among the $\tau_j$'s, any path to the same alcove will yield the same (unordered) cluster. From the construction of the lattice, $\tau_1 \, \mathcal{C}_{(1, j - 1)} = \mathcal{C}_{\{1,j-1\}}$. Because the operation $\tau_1$ changes only the first entry of the cluster in the given notation, we can infer that $z_{1,j-1}$ and $y_{1,j-1}$ are the same up to $\sigma$. By the factorization phenomenon, $z_{1,j-1}$ and $y_{1,j-1}$ have the same exponents. Hence, they differ at most by their leading term, which is determined by the parity of the number of $3$'s in either path. It is clear that the path to the even alcove has precisely one more $3$ than the path to the odd alcove, so indeed $z_{1,j-1} = \sigma y_{1,j-1}$.  

The statement for larger $i$ follows in the same way by keeping track of the parity of the number of $\tau_i$'s in the two different canonical paths. In summary, the canonical path to the even alcove $(i,j-1)$ and the canonical path to the odd alcove $\{i,j-1\}$ differ as words by a single $3$ if $j \equiv 2 \mod 3$. Hence, only the $z_{i,j-1}$ produced after applications of $\tau_3$ will differ by $\sigma$ from $y_{i,j-1}$. And $\tau_3$ is the final $\tau$ applied to reach an odd alcove on the row $j \equiv 2 \mod 3$ iff $i \equiv 1 \mod 3$.  

\end{proof}

\begin{proof}[Proof of Theorem \ref{thm:intcone}]

Note first that $c(\sigma \gamma^1_0) = y_{0,1}$, $c(\gamma^0_1) = z_{1,0}$, and $c(\gamma^1_1) = y_{1,1}$. The first two values can be checked explicitly and are also handled in \cite{zhang}. The final case follows from the decomposition presented in Figure \ref{fig: partition_firstwedge}.

In fact, note in this case $G = \gamma_{1}^{1}$, $C = \sigma\gamma_{0}^{0} = \emptyset$, and $Q_1 = \sigma\gamma_{1}^0$,
$Q_2 = \gamma_{1}^0$, $Q_3 = \gamma_{0}^1$, and $Q_4 = \sigma\gamma_{0}^1$, up to including an extra path of length two ($2$ edges) which dangle off of these subgraphs and do not effect the enumeration of perfect matchings.  See Figures \ref{fig: sigma D1} and \ref{fig: sigmarho D1}.

The covering monomial is easily computed by hand and compensates for the dangling edges. 

\begin{figure}[H]
    \centering
    \includegraphics[keepaspectratio=true, width=60mm]{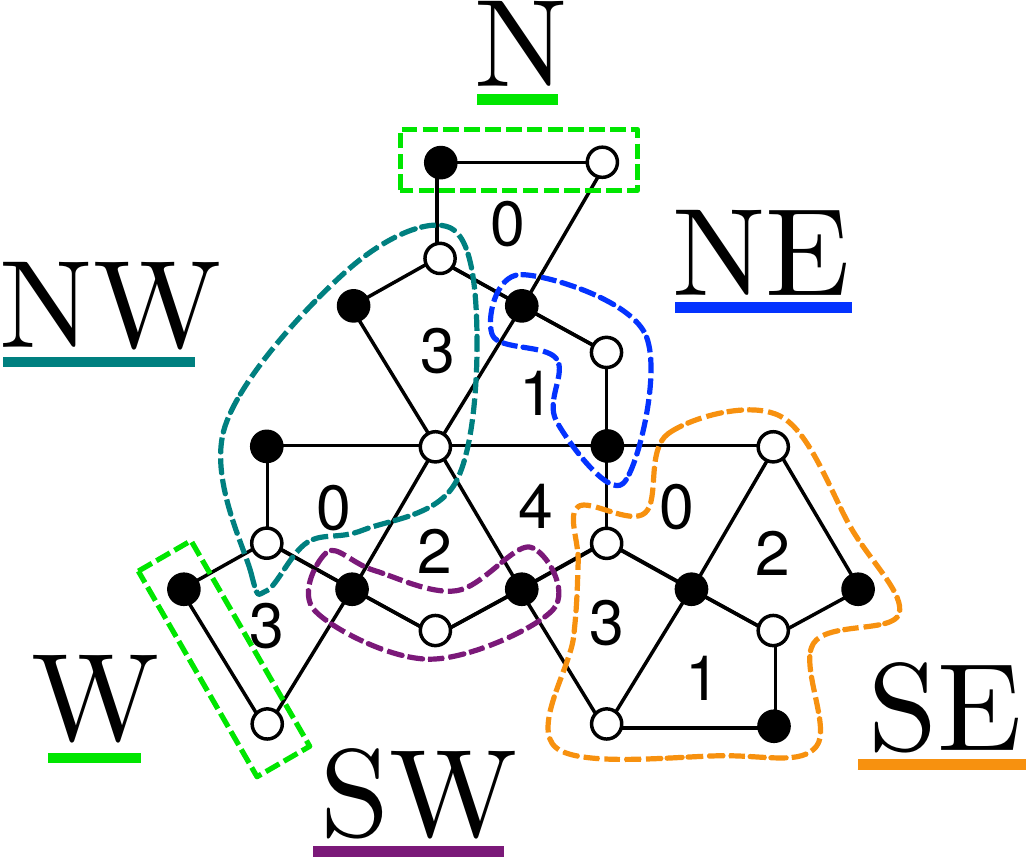}
    \caption{\small The condensation for the base case of $\gamma^{1}_{1}$ which corresponds to the alcove $(1,1)$.  }
    \label{fig: partition_firstwedge}
   
\end{figure}

Therefore, the theorem holds for the row $j = 1$ in the NE cone. Now let us consider rows higher than $j = 1$. A canonical path falls into one of the three forms described in Section \ref{subsec:cox}, based on the value of $j$ modulo $3$. By the work in \cite{zhang}, $y_{0,j} = D_{j} = \sigma \, \gamma^{j}_{0}$. 

Now we induct on the rows of the NE cone. Suppose that the theorem holds for row $j - 1$. We show that the theorem holds for the row $j$. This requires handling three cases, one for each type of canonical path. We handle the case $j \equiv 2 \mod 3$ explicitly with the two remaining cases following analogously. 

As can be checked, Lemma \ref{lem:oddsigma} combined with the inductive step verifies the statement of the theorem for the odd alcoves $\{1,j-1\}, \{2,j-1\}, \ldots, \{j-1, j-1\}$. The final odd alcove $\{j,j-1\}$ is a special case handled separately at the end. For the even alcoves, we will use the exchange relation and the labeling of the quiver at each step to verify that the exchange relation is compatible with the covering monomial and weight relations.

We will verify this first directly for the alcoves $(1,j), (2,j)$ and $(3,j)$, which serve as base cases. Before doing so, it will be convenient to introduce notation that represents the seed (the labeling of the quiver and the variables at each vertex) at any stage in the $\tau$-mutation sequence. The seed is represented by the following matrix.  
\[
\begin{pmatrix}
a_0 & a_1 & a_2 \\
X_0 & X_1 & X_2 \\
a_3 & a_4 & a_5 \\
X_3 & X_4 & X_5 \\
\end{pmatrix}
\]
Here, $\{a_0, \ldots, a_5 \}$ is a permutation of the set $\{0,1,2,3,4,5 \}$. Starting from the vertex labeled $0$ on the initial quiver and reading clockwise, $Q$ currently has the labeling $a_0-a_1-a_2-a_3-a_4-a_5$. The entry $X_i$ directly below $a_i$ specifies the cluster variable at the vertex labeled $a_i$. For example, 
$\begin{pmatrix}
0 & 5 & 3 \\
x_0 & x_5 & x_3 \\ 
1 & 4 & 2 \\
x_1 & x_4 & x_2 \\
\end{pmatrix}$ specifies the initial cluster. Observe also that because the action of any $\tau$ simply swaps antipodal vertices, $a_0$ and $a_3$ are either $0$ or $1$ after any $\tau$-mutation sequence. The analogous statements hold for $a_1$ and $a_4$ as well as $a_2$ and $a_5$. 

The path to the first alcove we consider, $(1,j)$, is given by $\beta^{\frac{2}{3}(j-2) + 1}1|32$. By the inductive hypothesis on $j$, the first part of this proof, the result of \cite{zhang} ($y_{0,j} = c(D_j)$), and the action of $\tau_i$ on the quiver, the seed has the following form immediately before applying the final $\tau$ in the given path.
\[
\begin{pmatrix}
0 & 5 & 2  \\
c(\sigma \, \gamma^{j}_{0}) & c(\gamma^{j-1}_{1}) & c(\sigma \, \gamma^{j-1}_{0}) \\
1 & 4 & 3 \\
c(\gamma^{j}_{0}) & c(\sigma \, \gamma^{j-1}_{1})  & c(\gamma^{j-1}_{0}) \\
\end{pmatrix}
\]
The exchange relation for the mutation $\mu_2$, the first mutation in $\tau_2$, is thus:
\begin{equation*}
x_{1,j} \cdot c(\sigma \, \gamma^{j-1}_{0}) = c(\gamma^{j}_{0}) c(\sigma \, \gamma^{j-1}_{1}) + c(\sigma \, \gamma^{j}_{0}) c(\gamma^{j-1}_{1})
\end{equation*}

If we multiply the general condensation relation by the covering monomial relation leaving $j$ as $j$ and setting $i = 1$, we recover the previous equation with $x_{1,j}$ replaced by $c(\gamma^{j}_{1})$. This implies $x_{1,j} = c(\gamma^{j}_{1})$ as desired. 

The same method verifies that $x_{2,j} = c(\gamma^{j}_{2})$ and $x_{3,j} = \sigma \, c(\gamma^{j}_{3})$. With this information, we can complete the inductive step. 

Suppose that our graphs are labeled according to the statement of the theorem has up to some odd alcove $\{i,j-1\}$ with $i \leq j$. We verify that the theorem holds for the alcove $(i, j)$. Then based on the value of $i$ modulo $3$, we have three cases, one of which we verify explicitly. 

Suppose $i \equiv 1 \mod 3$. Then the path to $(i, j)$ has the form $\beta^{\frac{2}{3}(j-2) + 1}1|(321)^{2u}32$, for some $u \in \mathbb{N}$. Observe that before the application of $\tau_2$, the quiver has the same vertex labeling as before entering the even alcove $(1,j)$. Then by the two inductive hypotheses the seed has the following form: 
\[
\begin{pmatrix}
0 & 5 & 2 \\
c(\sigma \, \gamma^{j}_{i - 1})  &  c(\gamma^{j - 1}_{i}) &c(\sigma \, \gamma^{j-1}_{i - 1}) \\
1 & 4 & 3 \\  
c(\gamma^{j}_{i - 1}) & c(\sigma \, \gamma^{j - 1}_{i})  & c(\gamma^{j-1}_{i - 1}) \\
\end{pmatrix}
\]
Then the exchange relation for $\mu_2$, the first step of $\tau_2$ is given below. 
\begin{equation*}
\begin{split}
x_{i, j} \cdot c(\sigma \, \gamma^{j-1}_{i - 1}) &= 
c(\sigma \, \gamma^{j}_{i-1}) c(\gamma^{j-1}_{i}) \\ &+ c(\gamma^{j}_{i-1}) c(\sigma \, \gamma^{j-1}_{i}) 
\end{split}
\end{equation*}

Multiplying the condensation relation by the covering monomial leaving $i$ as $i$ and $j$ as $j$, we get the previous equation with $x_{i,j}$ replaced by $c(\gamma^j_i)$, as desired. 

For $i \equiv 2 \mod 3$, the same process of keeping track of the form of the quiver reveals that $x_{i, j} = c(\gamma^{j}_{i})$. Likewise, if $i \equiv 0 \mod 3$, $x_{i, j} = c(\sigma \, \gamma^{j}_{i})$, as desired. 

We handle separately the final odd alcove $\{j,j-1\}$. The canonical path to this alcove is given by $P = \beta^{\frac{2}{3}(j-2) + 1}1|(321)^{\frac{2}{3}(j-2) + 1}$. Note that this alcove as well as $(j-1, j)$ are on the boundary of the NE cone traced by the path $12131213 \ldots$. Specifically, $(j-1, j)$ and $\{j, j-1\}$ are intersected by the last two steps of the path $(1213)^{j-1}121$. It follows from applying the recursions that naturally result from the proof of Proposition $\ref{thm:factor}$ that the cluster variables produced by the last two steps of $P$ are related by $\rho$ or $\sigma \rho$. Because the cluster is independent of path, the same must hold true for $y_{j-1, j}$ and $z_{j, j-1}$. Using an argument based on the parity of the number of $2$'s and $3$'s in the original path $P$ and recalling that $j \equiv 2 \mod 3$, we can directly observe that $z_{j,j-1} = \rho y_{j-1,j} = \rho c(\gamma^{j-1}_{j}) = c(\gamma^{j}_{j-1})$ by Remark \ref{rmk:rho}. This is consistent with the statement of the theorem, as desired.

This concludes the inductive step for the case $j \equiv 2 \mod 3$. The remaining cases $j \equiv 0, 1 \mod 3$ follow by the same strategy, with details left to the reader. This completes the proof.

\end{proof}

\end{subsection}

\end{section}

\begin{section}{Working in the Southwest Cone}
\label{sec:new-half-int}

Now we describe the graphs that appear in the \textbf{SW cone} first introduced in Section \ref{subsec:cox}. These graphs, which we call \textbf{SW Aztec castles}, are in perfect duality with the graphs in the NE cone, a fact which greatly simplifies our proofs. We begin by introducing hexagonal and six-tuple notation for the SW Aztec castles.

\begin{subsection}{Hexagonal and Six-tuple Notation}
\label{subsec: halfint hex not}

Given a six-tuple $[a,b,c,d,e,f]$, where we use square brackets to distinguish this from the previous notation, we associate the hexagon exhibited in Figure \ref{fig: halfsubpaths} below.  Such tuples defines graphs $G$ in the brane tiling, just like before.  Notice that the four-step subpaths and lengths of these subpaths are different than in the NE cone, and we use a basepoint on the right-hand-side rather than the left-hand-side.  
 
\begin{figure}[H]
    \centering
    \includegraphics[keepaspectratio=true, width=100mm]{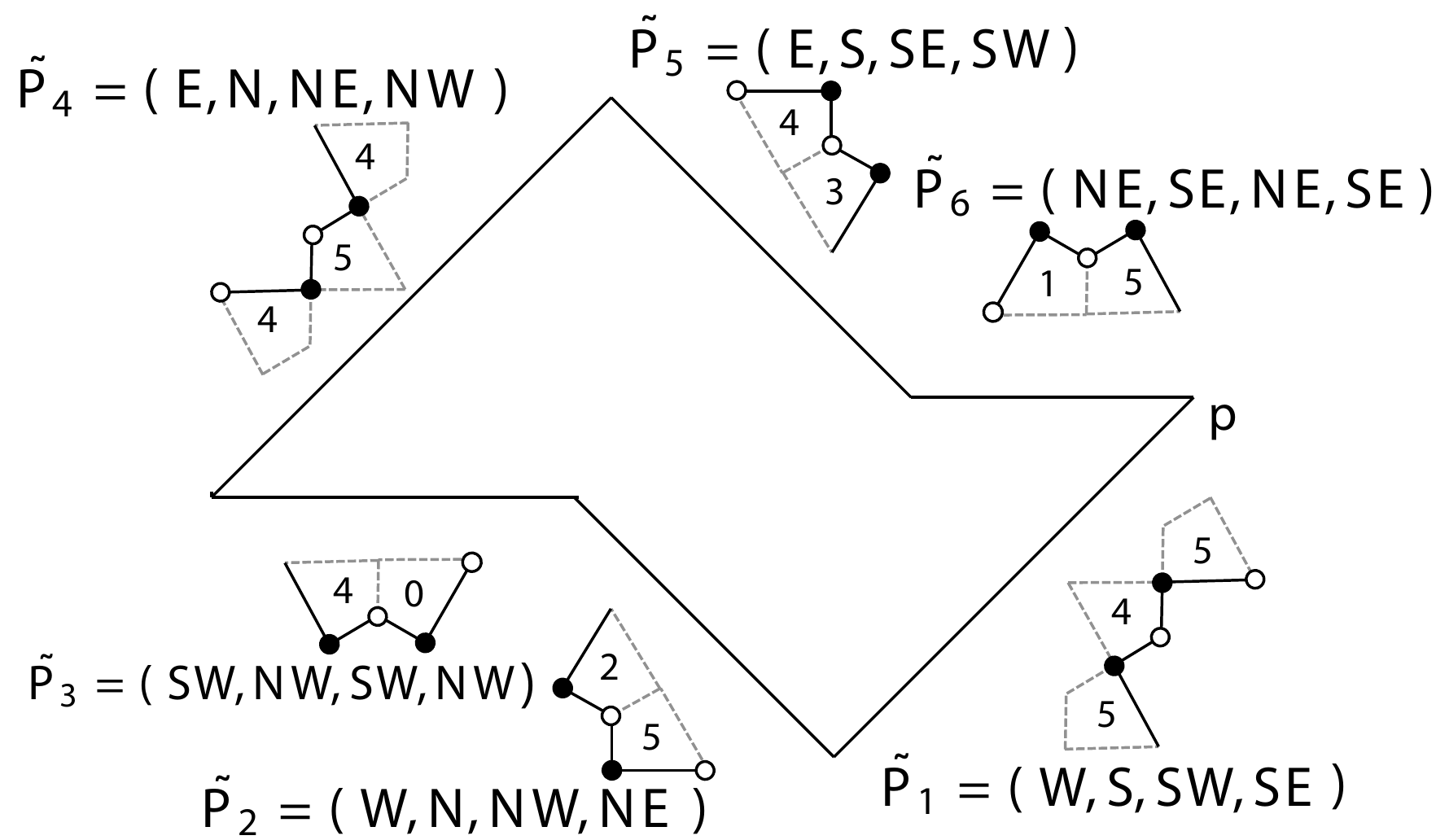}
    \caption{\small The six four-step subpaths, beginning at basepoint $p$, that trace out a boundary path $\tilde{P}$ placed around a SW Aztec castle to signify their position in the brane tiling. The subgraph induced by all of the vertices in $\tilde{P}$ is an Aztec castle in the SW cone.  }
    \label{fig: halfsubpaths}
\end{figure}

Associate the path 
\[
\tilde{P} = \tilde{P}_1^a \tilde{P}_2^{b+1} \tilde{P}_3^{c+1} \tilde{P}_4^{d} \tilde{P}_5^{e+1} \tilde{P}_6^{f+1} 
\]
which starts at the basepoint $p$ to the tuple $[a,b,c,d,e,f]$. Note that the transition of the path from $\tilde{P}_2$ to $\tilde{P}_3$ as well as $\tilde{P}_5$ to $\tilde{P}_6$ involves tracing out an edge twice. Both of these edges should be deleted from $\tilde{P}$, i.e. they are not included in the border of the SW Aztec castle. If $\tilde{P}$ is a simple closed curve after deletion, we define $G$ to be the subgraph induced by all vertices on the boundary path $\tilde{P}$ and in the interior region of $\tilde{P}$. 

As before, we can interpret negative exponents of these paths graphically for some special cases. Given a six-tuple $[a,b,c,d,e,f]$ with $c = -1$ or $f = -1$, construct the graph of $[a,b,c,d,e,f]$ as the interior of the path $\tilde{P}_1^a \tilde{P}_2^{b+1} \tilde{P}_3^{c+1} \tilde{P}_4^{d} \tilde{P}_5^{e+1} \tilde{P}_6^{f+1}$ with the leftmost square removed if $c = -1$ and the rightmost square removed if $f = -1$. 

\end{subsection}

\begin{subsection}{Southwest Aztec Castles and Duality}
\label{subsec:duality}
 
\begin{definition}
\label{def:half castles}
For $j<-1$ and $0 \geq i > j$, define the $\{i,j\}$-SW Aztec castle by:
\begin{equation*}
\tilde \gamma^j_i = [-i-j-1,-j-1,-i-1,-i-j-2,-j-2,-i].
\end{equation*}
These correspond to odd alcoves in the SW cone (up to $\sigma$).  As a base case, set $\tilde \gamma^{-1}_0$ to be a single quadrilateral in the brane tiling labeled $5$. 

\end{definition}

Below are two examples of SW Aztec castles. 

\begin{figure}[H]
\centering
\begin{subfigure}{.3\textwidth}
  \centering
  \includegraphics[keepaspectratio=true, width=60mm]{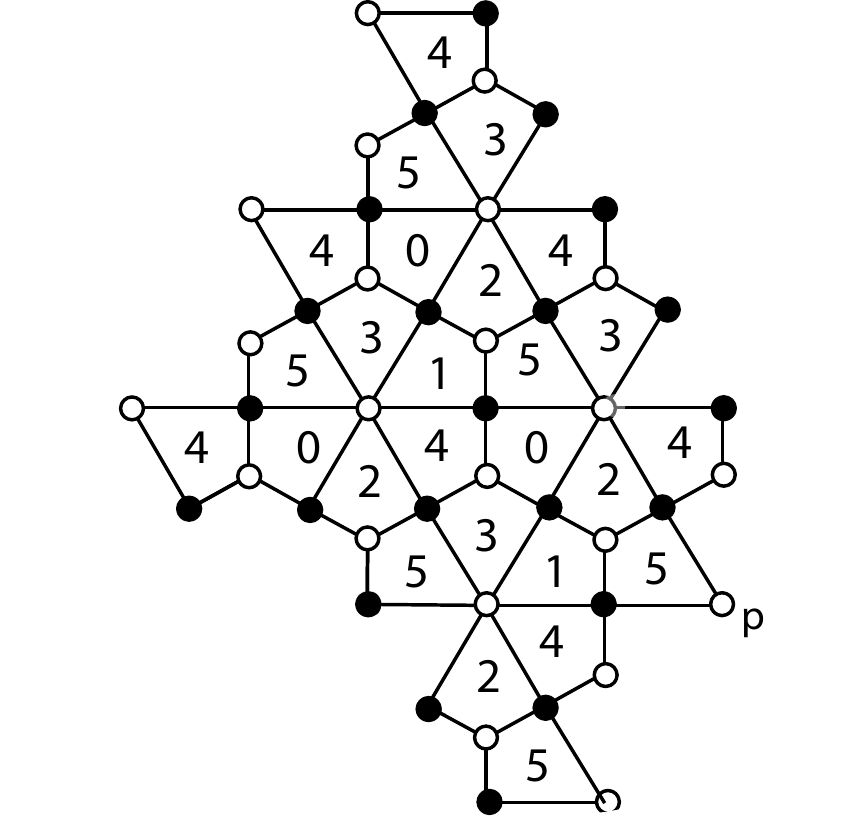}
  \label{fig:sub1}
\end{subfigure}%
\hspace{1 cm}%
\begin{subfigure}{.3\textwidth}
  \centering
  \includegraphics[keepaspectratio=true, width=100mm]{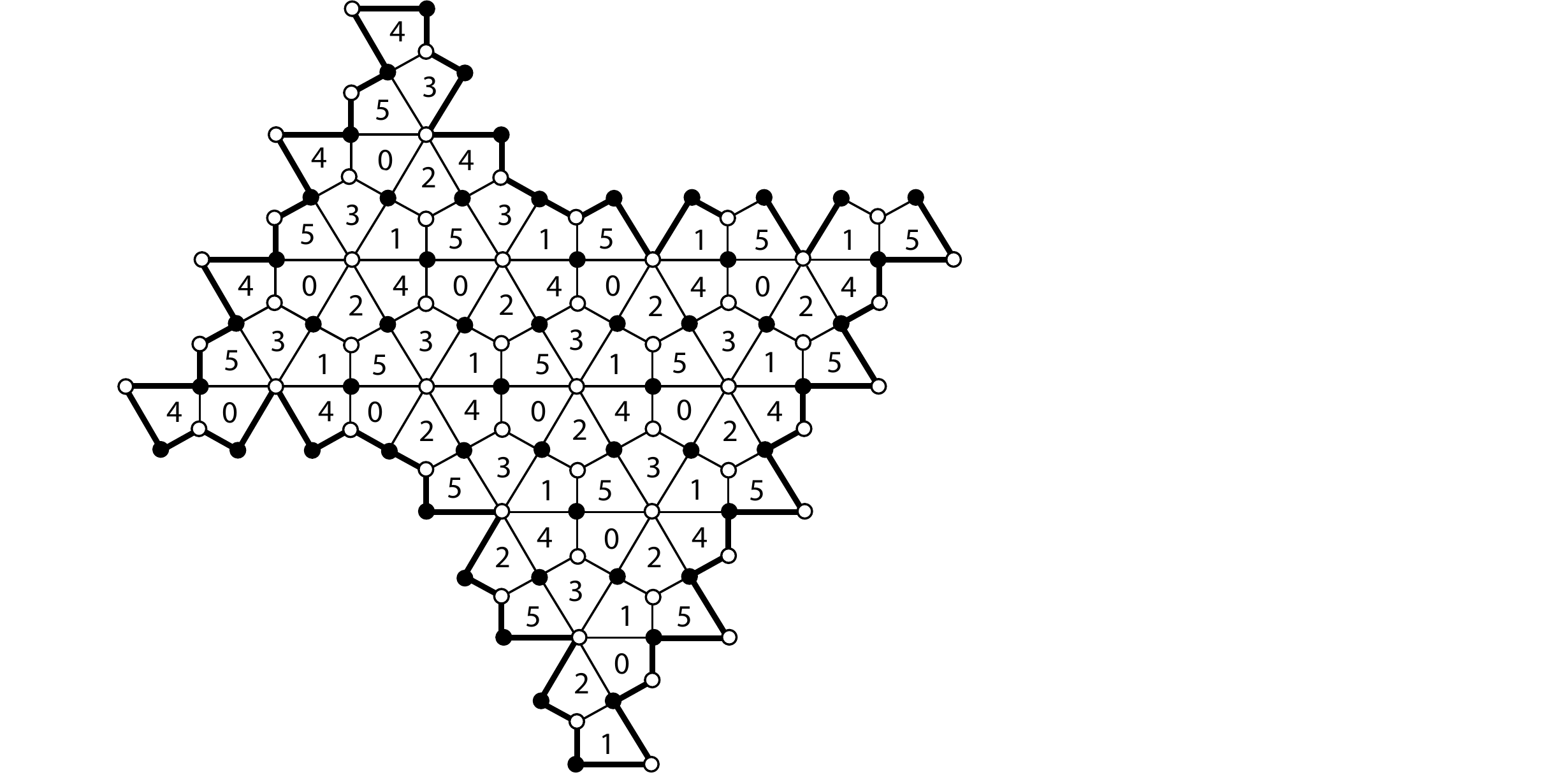}
  \label{fig:sub2}
\end{subfigure}
\caption{\textbf{(Left)}: A vertical reflection of the half-integer order Aztec dragon $D_{5/2}$, written $ \sigma \tilde{\gamma}^{-3}_0 = [1, 1, 0, 2, 2, -1]$ in hexagonal notation. \textbf{(Right)}: The SW Aztec castle $\tilde\gamma_{-2}^{-3} = [4,2,1,3,1,2]$. }
\label{fig:test}
\end{figure}

The permutation $\rho = (03)(12)$ has a nice behavior on these graphs as well. In general, we have $\tilde{\gamma}^{j}_{i} = \rho \tilde{\gamma}^{i - 1}_{j+1}$, mimicking the reflection between Region VII and VIII as in Table \ref{tab:perms}. 

Definitions \ref{def:integer castle} and \ref{def:half castles} imply that there is a natural map between NE and SW Aztec castles $\iota$ sending $(a,b,c,d,e,f)$ to $[a,b,c,d,e,f]$. Precisely, we have $\iota$ is an involution sending $\gamma_i^j \leftrightarrow \tilde{\gamma}_{-i}^{-j-1}$.

We will further show that $\iota$ has a natural action on (1) condensations involving Aztec castles, and (2) the covering monomial overlayings of Aztec castles. This leads to a brief proof of the main result of this section, that any cluster variable produced by canonical paths in the SW cone can expressed as $c(\tilde{\gamma}^j_i)$ or $c(\sigma \tilde{\gamma}^j_i)$.

\end{subsection}

\begin{subsubsection}{Weight Relations}

\begin{lemma}
\label{lem:genweight1}
For $0 \geq i > j$, the following relation holds:
\begin{equation*}
\begin{split}
w(\tilde{\gamma}^{j}_{i}) w(\sigma \, \tilde{\gamma}^{j + 1}_{i+1}) = (w(\sigma \, \tilde{\gamma}^{j + 1}_{i}) w(\tilde{\gamma}^{j}_{i + 1}) + w(\tilde{\gamma}^{j+1}_{i}) w(\sigma \, \tilde{\gamma}^{j}_{i + 1}))\left(\frac{1}{x_0 x_1 x_2 x_3 x_4 x_5}\right). 
\end{split}
\end{equation*}
\end{lemma}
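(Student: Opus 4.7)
The plan is to apply Speyer's graphical condensation theorem directly to $\tilde{\gamma}^j_i$, mirroring the argument used for Lemma \ref{lem:genweight} in the NE cone. Using the hexagonal notation of Section \ref{subsec: halfint hex not}, I would choose basepoints so that the central region $C$ of the condensation corresponds to $\sigma\tilde{\gamma}^{j+1}_{i+1}$, namely the smaller SW castle obtained by contracting two adjacent pairs of subpaths by one step each. The four composite regions should then be identified as $Q_1 = S \cup SW \cup SE \cup C = \sigma\tilde{\gamma}^{j+1}_i$, $Q_2 = W \cup SW \cup NW \cup C = \tilde{\gamma}^{j+1}_i$, $Q_3 = N \cup NW \cup NE \cup C = \tilde{\gamma}^j_{i+1}$, and $Q_4 = E \cup NE \cup SE \cup C = \sigma\tilde{\gamma}^j_{i+1}$, so that Speyer's identity
\begin{equation*}
w(\tilde{\gamma}^j_i)\, w(C) \;=\; w(Q_1)\,w(Q_3)\,w(E)\,w(W) \;+\; w(Q_2)\,w(Q_4)\,w(N)\,w(S)
\end{equation*}
rearranges precisely into the claim.

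The verification to be carried out is threefold: that the allowed edge connections between the nine regions agree with Figure \ref{fig:condthm}; that the boundary vertices in $NE$ and $SW$ (respectively $NW$ and $SE$) are black (respectively white) in the bipartition of the brane tiling; and that the black/white vertex counts in each region satisfy condition (3) of the condensation theorem. As in the NE argument, I expect two of the four peripheral regions $\{N, S, E, W\}$ to be empty while the remaining two each reduce to a short ``tail'' path in the brane tiling whose unique perfect matching carries the explicit weight $1/(x_0 x_1 x_2 x_3 x_4 x_5)$. This is what supplies the binomial right-hand side of the lemma after dividing through by $w(C) = w(\sigma\tilde{\gamma}^{j+1}_{i+1})$, with the extra care that the two edges deleted at the $\tilde{P}_2/\tilde{P}_3$ and $\tilde{P}_5/\tilde{P}_6$ corners do not lie in any tail region.

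The main obstacles are the degenerate cases at the boundary of the parameter range. When $i = 0$ or $-i-1 = -1$, some entries of the six-tuple $[-i-j-1,-j-1,-i-1,-i-j-2,-j-2,-i]$ equal $-1$, and the decomposition must be reinterpreted using the subpath-reversal convention of Section \ref{subsec: halfint hex not}, analogous to the $i=1$ subcase of the NE cone pictured in Figure \ref{fig:int_subcase_CM}. In particular, the innermost base case $(i,j) = (0,-2)$, where $C = \sigma\tilde{\gamma}^{-1}_0$ degenerates to a single quadrilateral per Definition \ref{def:half castles}, should be handled by direct inspection, paralleling the treatment of $\gamma^1_1$ in Figure \ref{fig: partition_firstwedge}. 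A seemingly slicker alternative would be to invoke the involution $\iota: \gamma^j_i \leftrightarrow \tilde{\gamma}^{-j-1}_{-i}$ from Section \ref{subsec:duality} to transfer Lemma \ref{lem:genweight} directly, but since $\iota$ is an identification of underlying graphs rather than of weighted graphs, one would still have to track how each edge weight changes under $\iota$; setting up a direct Kuo condensation in the SW cone is therefore the cleaner route.
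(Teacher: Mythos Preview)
Your approach is essentially the paper's own: set up a direct Kuo--Speyer condensation on $\tilde{\gamma}^j_i$ with $C = \sigma\tilde{\gamma}^{j+1}_{i+1}$ and the four composite regions $Q_1,\dots,Q_4$ exactly as you list them, with two of $N,S,E,W$ empty and the other two contributing the factor $1/(x_0x_1x_2x_3x_4x_5)$.

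The only substantive difference is that the paper \emph{does} take the $\iota$ shortcut you set aside. It transfers just the combinatorial data---the nine vertex sets of the decomposition---through $\iota$ from the NE condensation of $\gamma^{-j-1}_{-i}$ in Lemma~\ref{lem:genweight}, and then checks conditions (1)--(3) of the condensation theorem directly in the SW tiling. Because the condensation theorem is invoked on the SW side, the SW edge weights come along automatically and no weight tracking through $\iota$ is required; your objection to that route therefore does not bite, and using $\iota$ saves the labor of rediscovering the decomposition from scratch. One index slip: the base case in which $C = \sigma\tilde{\gamma}^{-1}_0$ degenerates to a single quadrilateral is $(i,j)=(-1,-2)$, not $(0,-2)$.
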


\begin{proof}

The base case $i = -1$, $j = -2$ is handled separately.  It is similar to the $i=j=1$ case in the NE cone, see Figure \ref{fig: partition_firstwedge}, although this time, $G = \tilde\gamma_{-1}^{-2}$, the subgraph $C$ is $\sigma \tilde\gamma_{0}^{-1}$, i.e. a quadrilateral labeled $4$ in the brane tiling, and subgraphs $Q_1$, $Q_2$, $Q_3$, and $Q_4$ are rotated versions of $D_{3/2}$ up to vertical reflection, $\rho$, and $\sigma$.

\begin{figure}[H]
    \centering
    \includegraphics[keepaspectratio=true, width=70mm]{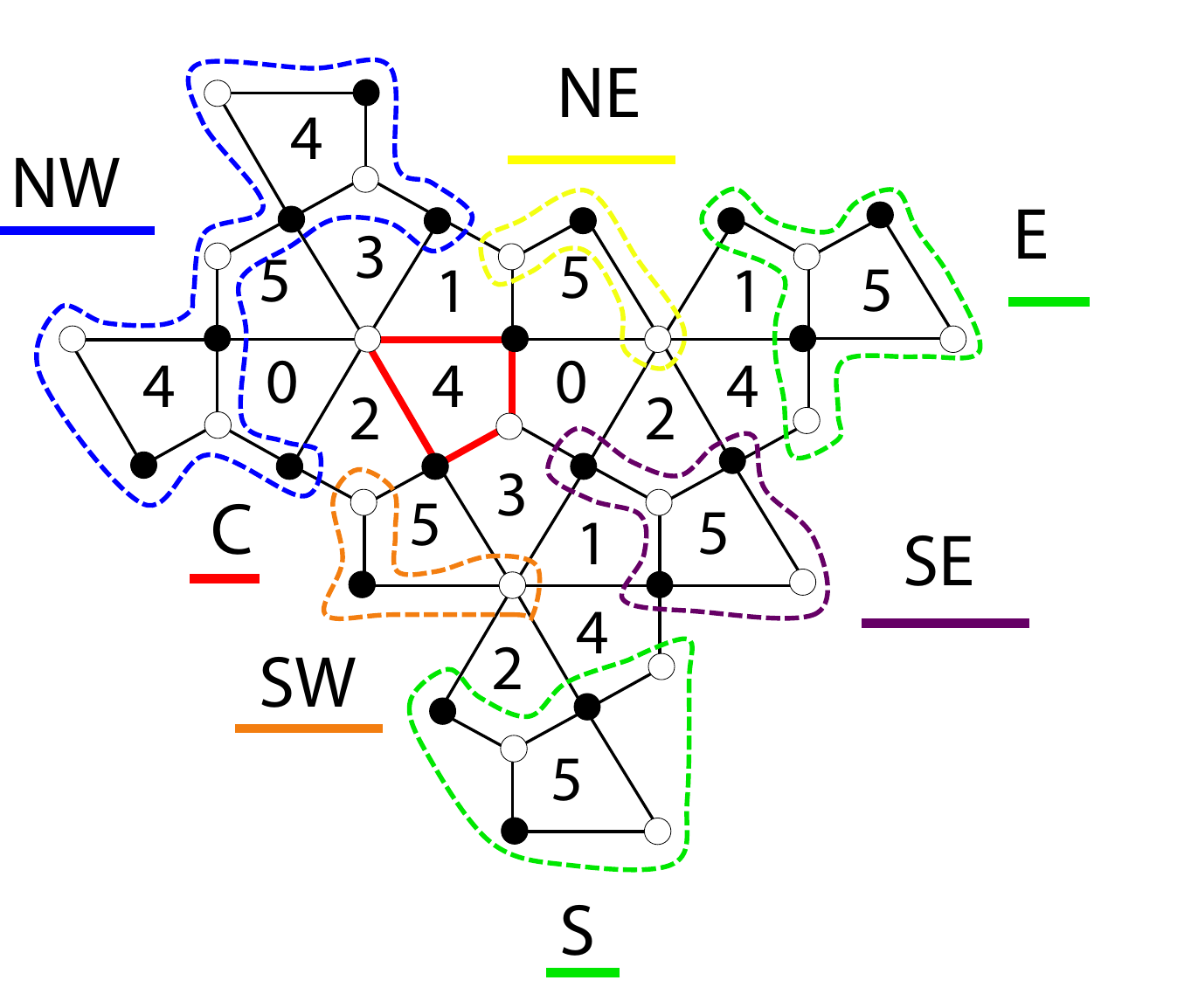}
    \caption{\small The condensation of the castle $\tilde{\gamma}^{-2}_{-1}$.}
    \label{fig:halfDragon}
\end{figure}

Now suppose $j < -2$. Select a basepoint $p$ for $\tilde{\gamma}^j_i$ and let $p_0$ be the endpoint of the path $\tilde{P}_1^1 \tilde{P}_3^1$ beginning at $p$, let $p_1$ be the endpoint of the path $\tilde{P}_3^1$, and let $p_2$ be the endpoint of the path $\tilde{P}_1^1$. We want to decompose the graph $\tilde{\gamma}^{j}_i$. To do so, we simply let $\iota$ act on all of the graphs/vertex sets used in the construction of the decompositions of $\gamma^{-j-1}_{-i}$ in Lemma \ref{lem:genweight}. It is straightforward to check that these decompositions satisfy the conditions of the condensation theorem, and hence the desired weight relation follows. 

\end{proof}

\end{subsubsection}

\begin{subsubsection}{Covering Monomial Relations}

\begin{lemma}
\label{lem: half int cov mon}
For $0 \geq i > j$: 
\begin{equation*}
\begin{split}
m (\tilde{\gamma}^j_ {i}) m (\sigma \tilde{\gamma}^{j + 1} _ {i+1}) &= (x_0x_1x_2x_3x_4x_5) m (\sigma\tilde{\gamma}^{j + 1} _ {i}) m (\tilde{\gamma}^{j}_ {i + 1}) \\ &= (x_0x_1x_2x_3x_4x_5)m (\tilde{\gamma}^{j + 1} _ {i})m (\sigma\tilde{\gamma}^{j}_ {i + 1}) 
\end{split}
\end{equation*}

\end{lemma}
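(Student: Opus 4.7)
The plan is to mirror the proof of Lemma \ref{lem:int cov mon} (the NE covering monomial relation), exploiting the duality $\iota$ introduced in Section \ref{subsec:duality} which sends $\gamma_i^j \leftrightarrow \tilde\gamma_{-i}^{-j-1}$, together with the condensation decomposition already produced in the proof of Lemma \ref{lem:genweight1}. First, I would dispatch the base case $i=-1$, $j=-2$ directly from the picture in Figure \ref{fig:halfDragon}: both sides evaluate to an explicit monomial in $x_0,\ldots,x_5$ that is easily read off the decomposition, using that $\tilde\gamma_0^{-1}$ is a single quadrilateral.

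For $j<-2$, the plan is to invoke exactly the partition $C \sqcup N \sqcup S \sqcup W \sqcup E \sqcup NW \sqcup NE \sqcup SW \sqcup SE$ of $G = \tilde\gamma_i^j$ constructed in the proof of Lemma \ref{lem:genweight1}, in which
\[
C = \sigma\tilde\gamma^{j+1}_{i+1}, \qquad Q_1 = \sigma\tilde\gamma^{j+1}_{i}, \qquad Q_2 = \tilde\gamma^{j+1}_{i}, \qquad Q_3 = \tilde\gamma^{j}_{i+1}, \qquad Q_4 = \sigma\tilde\gamma^{j}_{i+1},
\]
with $Q_1, Q_2, Q_3, Q_4$ the four overlapping subregions obtained by fusing $C$ with $S\cup SE\cup SW$, $W\cup SW\cup NW$, $N\cup NE\cup NW$, $E\cup NE\cup SE$ respectively, and with the residual regions being exactly $N$ and $W$ (or their SW analogues) which are both small hexagonal strips covering exactly one face of each label in $\{0,1,2,3,4,5\}$.

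I would then reframe the statement: letting $m'(G)$ denote the multiset of quadrilateral faces of the brane tiling that are contained in or border $G$, the lemma becomes the two multiset identities
\[
m'(\tilde\gamma^j_i)\sqcup m'(\sigma\tilde\gamma^{j+1}_{i+1}) = \{x_0,x_1,x_2,x_3,x_4,x_5\}\sqcup m'(\sigma\tilde\gamma^{j+1}_i)\sqcup m'(\tilde\gamma^j_{i+1})
\]
and analogously with $Q_2,Q_4$ in place of $Q_3,Q_1$. These reduce to checking, for each pairing, that (a) $m'(Q_1)\cap m'(Q_3)=m'(C)$, (b) $m'(Q_1)\cup m'(Q_3)$ together with the six faces from the leftover region covers $m'(\tilde\gamma^j_i)$ with correct multiplicity, and (c) the leftover region contributes exactly one quadrilateral of each label; the symmetric statement for $(Q_2,Q_4)$ then yields the second equality.

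The main obstacle I expect is the face-bookkeeping at the junctions where the subpaths $\tilde P_k$ of $\tilde\gamma^j_i$ meet those of the $Q_\ell$'s and of $C$ — in particular, one must verify that the boundary quadrilaterals shared between two $Q_\ell$'s are precisely the boundary quadrilaterals of $C$, and that the leftover strip $W$ (resp.\ $N$) picks up one face of every label. This is the same combinatorial check that was carried out for NE castles; since $\iota$ preserves the hexagonal six-tuple structure and only replaces the four-step subpaths $P_k$ by the zig-zag subpaths $\tilde P_k$ (shifted by the additional $+1$ exponents accounting for the duplicated edges described in Section \ref{subsec: halfint hex not}), the same local incidences at the boundary carry over, and the verification is structurally identical to that of Lemma \ref{lem:int cov mon}. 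Once both multiset identities are established, exponentiating yields the product form in the statement.
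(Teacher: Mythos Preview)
Your proposal is correct and follows essentially the same approach as the paper: dispatch the base case $i=-1$, $j=-2$ by hand, and for $j<-2$ use the condensation decomposition of $\tilde\gamma_i^j$ from Lemma \ref{lem:genweight1} (equivalently, apply $\iota$ to the overlayings in Lemma \ref{lem:int cov mon}) to reduce the covering monomial identity to the same multiset verification as in the NE case. The paper's own proof is simply a terser version of exactly this argument.
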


\begin{proof}

The case $i = -1$, $j = -2$ is easily checked by hand. For $j < -2$, as in the proof of Lemma \ref{lem:int cov mon}, the condensations in Lemma \ref{lem:genweight1} naturally give rise to the desired overlayings. Alternatively, apply $\iota$ to those overlappings occuring in Lemma \ref{lem:int cov mon}.

\end{proof}

\end{subsubsection}

\begin{subsubsection}{Proof of the Southwest Cone Theorem and Main Theorem}

Now we can describe the correspondence between SW Aztec castles and alcoves of the SW cone, region VII. 

\begin{theorem}[Main Result 2]
\label{thm:halfintcone}

(1) For $0 \geq i > j$, let $\{i,j\}$ label an odd alcove in the SW cone according to Figure \ref{fig:oddeven}.  Let $\tau_{a_1}\tau_{a_2}\cdots \tau_{a_{-2i-2j-1}}$ denote 
the canonical path from the origin to this alcove. Let 
$y_{i,j}$ and $y_{i,j}'$ denote the penultimate and final cluster variables produced by $\tau_{a_{-2i-2j-1}}$. 

Then $y_{i,j} = 
\begin{cases}  
c(\sigma \tilde{\gamma}_i^j) &\mathrm{~if~} |i| \equiv 0 \mod 3\\ c(\tilde{ \gamma}_i^j) &\mathrm{~otherwise}\end{cases}$, and 

$y_{i,j}' = 
\begin{cases} 
c(\tilde{\gamma}_i^j) &\mathrm{~if~} |i| \equiv 0 \mod 3\\ c(\sigma \tilde{\gamma}_i^j) &\mathrm{~otherwise}\end{cases}$.

(2) For $-1 \geq i > j$, let $( i,j+1 )$ label an even alcove in the NE cone. Let $\tau_{a_1}\tau_{a_2}\cdots \tau_{a_{-2i- 2j}}$ denote 
the canonical path from the origin to this alcove.  This time, let  
$z_{i,j+1}$ and $z_{i,j+1}'$ denote the penultimate and final cluster variables produced by $\tau_{a_{-2i - 2j}}$, respectively.

Then $z_{i,j+1} = 
\begin{cases} c(\tilde{\gamma}_i^{j+1}) &\mathrm{~if~} |i| \equiv 2 \mod 3, \\ c(\sigma\tilde{\gamma}_i^{j+1}) &\mathrm{~otherwise}\end{cases}$, and

$z_{i,j+1}' = 
\begin{cases} c(\sigma \tilde{\gamma}_i^{j+1}) &\mathrm{~if~} |i| \equiv 2 \mod 3, \\ c(\tilde{\gamma}_i^{j+1}) &\mathrm{~otherwise}\end{cases}$.

\end{theorem}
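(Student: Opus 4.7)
The plan is to mirror the proof of Theorem~\ref{thm:intcone} in the Southwest cone, leveraging the duality map $\iota$ from Section~\ref{subsec:duality} together with Lemmas~\ref{lem:genweight1} and~\ref{lem: half int cov mon}. These are precisely the SW analogues of Lemmas~\ref{lem:genweight} and~\ref{lem:int cov mon}, obtained by pushing the NE condensations and covering monomial overlayings through $\iota$, so the skeleton of the induction is identical; the new work lies in tracking the $\sigma$'s and the seed labeling correctly along canonical paths in region~VII. The base cases live at the alcoves $\{0,-1\}$, $(-1,0)$, and $\{-1,-2\}$ closest to the origin, playing the roles that $y_{0,1}$, $z_{1,0}$, $y_{1,1}$ played in the NE argument. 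The first two reduce to direct computation from the definition (recalling that $\tilde{\gamma}_0^{-1}$ is a single quadrilateral labeled $5$), and the case $y_{-1,-2} = c(\tilde{\gamma}_{-1}^{-2})$ is handled by the condensation of Figure~\ref{fig:halfDragon} together with a hand computation of the covering monomial.

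Next I would prove the SW analog of Lemma~\ref{lem:oddsigma}: moving one mutation along a canonical path between consecutive odd and even alcoves changes the last two cluster variables by $\sigma$ exactly when the additional $\tau$-step has the appropriate parity. The factorization phenomenon (Proposition~\ref{thm:factor}) forces the two variables to differ at most by a permutation of antipodes, so the proof is a parity count on the number of $\tau_k$'s appearing in the two canonical paths.

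I then induct on $|j|$, splitting into the three residue classes $|j| \equiv 0, 1, 2 \pmod 3$ and using the canonical path forms recorded in Section~\ref{subsec:cox}. For each row I would verify a short initial segment by writing down the seed matrix
\[
\begin{pmatrix} a_0 & a_1 & a_2 \\ X_0 & X_1 & X_2 \\ a_3 & a_4 & a_5 \\ X_3 & X_4 & X_5 \end{pmatrix}
\]
as in the NE proof and reading off the exchange relation at the current mutation; that relation coincides with the product of Lemma~\ref{lem:genweight1} and Lemma~\ref{lem: half int cov mon} at the current $(i,j)$, yielding the claimed $c(\tilde{\gamma}_i^j)$ or $c(\sigma\tilde{\gamma}_i^j)$. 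The inductive hypothesis and the same combined identity then propagate the formula along the rest of the row. The last alcove on each row, lying on the boundary $32123212\ldots$, is treated separately via $\rho$-symmetry, exploiting $\tilde{\gamma}^{j}_i = \rho\tilde{\gamma}^{i-1}_{j+1}$.

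The main obstacle will be the bookkeeping of the $\sigma$'s and leading terms, since the parity conditions in the SW cone ($|i| \equiv 0 \pmod 3$ for the penultimate variable, $|i| \equiv 2 \pmod 3$ for the corresponding even-alcove variable) differ from the NE case and the canonical paths here use $\beta^{-1} = 321$ rather than $\beta$. The duality $\iota$ transports the graph-theoretic content and the recursive structure cleanly, but does not automatically transport these parity conventions, so each residue class of $|j|$ requires its own verification at the seed level. Once this is carried out, the induction closes, and combining Theorems~\ref{thm:intcone} and~\ref{thm:halfintcone} with Lemma~\ref{cor:zhangperm} yields the Main Theorem~\ref{thm:main}.
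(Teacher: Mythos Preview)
Your proposal is correct and follows essentially the same approach as the paper. The paper's own proof is a single sentence pointing to Lemmas~\ref{lem:genweight1} and~\ref{lem: half int cov mon} together with ``the same techniques as in Section~\ref{sec:proofintcone}''; you have simply unpacked what those techniques are---the base cases near the origin, the SW analogue of Lemma~\ref{lem:oddsigma}, the row-by-row induction split by $|j| \bmod 3$, the seed-matrix bookkeeping, and the $\rho$-symmetry at the boundary---so your outline is, if anything, more explicit than the paper itself.
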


This theorem follows using Lemmas \ref{lem:genweight1} and \ref{lem: half int cov mon} along with applying the same techniques as in Section \ref{sec:proofintcone}.

With the above result, Lemma \ref{cor:zhangperm}, implies that our main theorem, Theorem \ref{thm:main}, is proven by applying the appropriate permutation $\alpha$ for alcoves in regions II - VI, VIII-XII of the Coxeter lattice. Such $\alpha$ are given in Table \ref{tab:perms2}. 

\end{subsubsection}

\end{section}

\begin{section}{Further Questions}
\label{sec: conclusion}

Associating subgraphs to the cluster variables produced by $\tau$-mutation sequences on the dP3 quiver presents a foundation from which to explore several promising directions. We now summarize some possibilities for future investigation.

\begin{problem}To any matching of a planar bipartite graph $G$ it is possible to associate a corresponding \textbf{height function} on the set of matchings of $G$, see for example \cite{dimermodel}, or \cite{CY}. In \cite{JMZpaper}, it is proven that if we enrich our quiver with principal coefficients, the cluster variables produced by the sequence $123\cdots$ are still given by the weighted matchings of Aztec dragons where the coefficients correspond to the height of each perfect matching. In particular, the second author conjectures that Aztec castles equipped with height functions also account for principal coefficients.
\end{problem}

 \begin{problem} One can examine more closely the graph-theoretic properties of Aztec castles. For example, what does the distribution of random perfect matchings look like? Do we have some ``Arctic circle'' behavior as shown for the classical Aztec diamonds in \cite{propp}? Also, in \cite{CY} a domino-shuffle is outlined on the dP$3$ brane tiling; what happens when this shuffle is applied to perfect matchings of Aztec castles? Is there a simple way to see that the number of these matchings is a power of $2$? Even if the shuffling does not give us perfect matchings of other Aztec castles, it could give a new class of subgraphs of the brane tiling whose number of perfect matchings are a power of $2$. \end{problem}

\begin{problem} Now that we have analyzed $\tau$-mutation sequences, it is natural to try to extend the work to further mutation sequences on the dP$3$ quiver, and in particular to toric sequences (mutating only at vertices with $2$ incoming and $2$ outgoing arrows) that are not $\tau$-mutation sequences. While the factorization phenomenon no longer occurs in these cases, a search for subgraphs with appropriate weighted enumerations of perfect matchings is still of interest. A further natural extension would be interpret the other three toric phases of dP$3$ (see Figure 27 of \cite{franco_eager}) in terms of perfect matchings of graphs in the corresponding brane tiling. \end{problem}

\begin{problem} Alexander Garver suggests looking at certain products of cluster variables produced by $\tau-$mutation sequences, as they encode data on pairs of perfect matchings of Aztec castles. Specifically, given two such cluster variables $x_{i}$ and $x_{j}$, if we may write $x_{i}x_{j} = \displaystyle\sum_{k}c_kx_k$, where the $x_k$'s are cluster variables, how do the coefficients $c_k$ encode combinatorial data on superpositions of perfect matchings of the castles associated to  $x_{i}$ and $x_{j}$?
\end{problem}

 \begin{problem}
 \label{prob:ef}
This research project was jumpstarted by applying the method for constructing shadows as outlined in Sections 4 and 7.3 of \cite{franco_eager} by Richard Eager and Sebasti\'{a}n Franco. Their work presents two algorithms for computing subgraphs corresponding to toric mutations. During this investigation, a discrepancy between the graphs obtained by these two methods was observed, for instance in the special case of the $\tau$-sequence $1213\ldots$, but this was soon resolved through a private discussion between the second author and Franco \cite{FM}. In spite of this issue, the method was still instrumental in allowing us to construct the Aztec castles appearing in this paper. This and other observations suggest that there is much work to be done in providing a mathematical basis to the algorithm -- one which could not only shed light on the connections between the combinatorics of cluster algebras and gauge/string theory duality, but also immediately lead to combinatorial interpretations of the type given in this paper for a wide variety of quivers and their associated brane tilings. 
\end{problem}

\end{section}

\section*{Acknowledgments}
\par{
This research was conducted during the 2013 REU in Combinatorics at the University of Minnesota, Twin Cities. The authors would first like to thank Pavlo Pylyavskyy, Joel Lewis, and Dennis Stanton for mentoring the REU alongside the second author. We are grateful that Pavlo Pylyavskyy pointed out the connection to the Coxeter lattice used throughout the paper and that Joel Lewis provided some insightful observations on the factorization phenomenon. The authors would like to thank Thomas McConville for a useful conversation regarding canonical paths, and Sebasti\'
{a}n Franco for correspondence regarding the methods in his work with Richard Eager. 
We also used Mathematica \cite{Mathematica} and the cluster algebras package \cite{Sage2} in Sage \cite{Sage} for numerous computations which were pivotal in finding the patterns leading to our main results.  Thanks also goes to the graduate student mentor Alexander Garver for numerous fruitful conversations and for commenting diligently on drafts of the manuscript, and to Rick Kenyon for a clarifying conversation. Finally, our gratitude extends to two anonymous referees whose insightful comments were instrumental in the construction of the final product. This research was supported by NSF grants DMS-$1067183$ and DMS-$1148634$.

}

\bibliographystyle{alphaurl}

\begin{thebibliography}{FHHU01}

\bibitem[CL14]{lai'}
M.~Cuicu and T.~Lai.
\newblock {Proof of Blum's conjecture on hexagonal dungeons}.
\newblock {\em Journal of Combinatorial Theory, Series A}, 125:273--305, 2014.

\bibitem[CY10]{CY}
C.~{Cottrell} and B.~{Young}.
\newblock {Domino shuffling for the Del Pezzo 3 lattice}.
\newblock {\em ArXiv e-prints}, October 2010.
\newblock \href {http://arxiv.org/abs/1011.0045} {\path{arXiv:1011.0045}}.

\bibitem[CJ09]{CJ}
W. -y.~{Chuang} and D. L. ~{Jafferis}.
\newblock {Wall Crossing of BPS States on the Conifold from
Seiberg Duality and Pyramid Partitions}
\newblock {\em Commun. Math. Phys.}
{\bf 292}, 285--301 (2009).
\newblock \href {http://arxiv.org/abs/0810.5072} {\path{arXiv:0810.5072}}.

\bibitem[CEP96]{propp}
H.~Cohn, N.~Elkies, and J.~Propp.
\newblock {Local Statistics for Random Domino Tilings}.
\newblock {\em Duke Math J.}, 1:85-- 117-166,
  1996.

\bibitem[EF]{franco_eager}
R.~Eager and S.~Franco.
\newblock {Colored BPS Pyramid Partition Functions, Quivers and Cluster
  Transformations}.
\newblock {\em JHEP}, 1209:038.

\bibitem[FG]{teich}
V.~V. {Fock} and A.~B. {Goncharov}.
\newblock {Moduli spaces of local systems and higher Teichmuller theory}.
\newblock {\em Publications Mathematiques de lInstitut des Hautes Etudes
  Scientifiques}, 103:211.

\bibitem[FHHU01]{feng}
Bo~Feng, Amihay Hanany, Yang-Hui He, and Angel~M. Uranga.
\newblock {Toric Duality as Seiberg Duality and Brane Diamonds}.
\newblock {\em JHEP}, 0112:035, 2001.
\newblock \href {http://arxiv.org/abs/0109063} {\path{arXiv:0109063}}, \href
  {http://dx.doi.org/10.1088/1126-6708/2001/12/035}
  {\path{doi:10.1088/1126-6708/2001/12/035}}.

\bibitem[FHK{\etalchar{+}}]{brane_dimer}
S.~Franco, A.~Hanany, K.~D. Kennaway, D.~Vegh, and B.~Wecht.
\newblock {Brane dimers and quiver gauge theories}.
\newblock {\em JHEP}, 0601:096.

\bibitem[FM]{FM}
S.~Franco and G.~Musiker.
\newblock private communication.

\bibitem[FZ]{FZ}
S.~{Fomin} and A.~{Zelevinsky}.
\newblock {Cluster algebras I: Foundations}.
\newblock {\em Journal of the American Mathematical Society}, 15(2):497--529.

\bibitem[GK12]{GK}
A.~Goncharov and R.~Kenyon.
\newblock {Dimers and Cluster Integrable Systems}.
\newblock {\em ArXiv Mathematics e-prints}, November 2012.
\newblock \href {http://arxiv.org/abs/math/1107.5588}
  {\path{arXiv:math/1107.5588}}.

\bibitem[HS]{hanany_polygons}
A.~Hanany and R.~Seong.
\newblock {Brane Tilings and Reflexive Polygons}.
\newblock {\em Fortsch.Phys.}, 60:695--803.

\bibitem[Jeo11]{jeong}
I.~Jeong.
\newblock {Bipartite Graphs, Quivers, and Cluster Variables }.
\newblock 2011.
\newblock URL: \url{http://www.math.umn.edu/~reiner/REU/Jeong2011.pdf}.

\bibitem[JMZ]{JMZpaper}
I.~Jeong, G.~Musiker, and S.~Zhang.
\newblock {Brane Tilings and Cluster Algebras I}.
\newblock {\em In preparation}.

\bibitem[JMZ13]{JMZ}
I.~Jeong, G.~Musiker, and S.~Zhang.
\newblock {Gale-Robinson Sequences and Brane Tilings}.
\newblock {\em DMTCS proc. AS}, pages 737--748, 2013.
\newblock URL:
  \url{http://www.liafa.jussieu.fr/fpsac13/pdfAbstracts/dmAS0169.pdf}.

\bibitem[{Kel}08]{rep}
B.~{Keller}.
\newblock {Cluster algebras, quiver representations and triangulated
  categories}.
\newblock {\em ArXiv e-prints}, July 2008.
\newblock \href {http://arxiv.org/abs/0807.1960} {\path{arXiv:0807.1960}}.

\bibitem[{Ken}03]{dimermodel}
R.~{Kenyon}.
\newblock {An introduction to the dimer model}.
\newblock {\em ArXiv Mathematics e-prints}, October 2003.
\newblock \href {http://arxiv.org/abs/math/0310326}
  {\path{arXiv:math/0310326}}.

\bibitem[{Kuo}]{kuo}
E.~H. {Kuo}.
\newblock {Applications of Graphical Condensation for Enumerating Matchings and
  Tilings}.
\newblock {\em Theoretical Computer Science}, 319:29--57.

\bibitem[Lai14]{lai}
T.~Lai.
\newblock {New Aspects of Hexagonal Dungeons}.
\newblock pages 0--63, mar. 2014.
\newblock \href {http://arxiv.org/abs/1403.4481} {\path{arXiv:1403.4481}}.

\bibitem[LS]{SchiffLee}
K.~{Lee} and R.~{Schiffler}.
\newblock {A Combinatorial Formula for Rank 2 Cluster Variables}.
\newblock {\em Journal of Algebraic Combinatorics}, 37:67--85.

\bibitem[MSa]{MS1}
G.~{Musiker} and R.~{Schiffler}.
\newblock {Cluster expansion formulas and perfect matchings}.
\newblock {\em Journal of Algebraic Combinatorics}, 32:187--209.

\bibitem[MSb]{Sage2}
Gregg Musiker and Christian Stump.
\newblock { A Compendium on the Cluster Algebra and Quiver Package in Sage}.
\newblock {\em S\'{e}minaire Lotharingien de Combinatoire}, 65:B65d.

\bibitem[{Mus}]{Musik}
G.~{Musiker}.
\newblock {A graph theoretic expansion formula for cluster algebras of
  classical type}.
\newblock {\em Annals of Combinatorics}, 15:147--184.

\bibitem[Rou]{coxlattice}
R.~Rouquier.
\newblock {Weyl groups, affine Weyl groups, and reflection groups}.
\newblock URL: \url{http://www.math.ucla.edu/~rouquier/papers/weyl.pdf}.

\bibitem[{Spe}]{speyer}
D.~E {Speyer}.
\newblock {Perfect Matchings and the Octahedron Recurrence}.
\newblock {\em Journal of Algebraic Combinatorics}, 25:309--348.

\bibitem[SW]{tropgeo}
D.~{Speyer} and L.~K. {Williams}.
\newblock {The tropical totally positive Grassmannian}.
\newblock {\em Journal of Algebraic Combinatorics}, 22:189--210.

\bibitem[S{\etalchar{+}}09]{Sage}
W.\thinspace{}A. Stein et~al., \emph{{S}age {M}athematics {S}oftware
({V}ersion 6.2)}, The Sage Development Team, YYYY, {\tt
http://www.sagemath.org}.

\bibitem[Sze]{szendroi}
Balazs Szendr\H{o}i.
\newblock {Non-Commutative Donaldson-Thomas Theory and the Conifold}.
\newblock {\em Geometric Topology}, 12:1171--1202.

\bibitem[WR]{Mathematica}
Inc. Wolfram~Research.
\newblock {Mathematica}.

\bibitem[Zel07]{zel}
A.~Zelevinsky.
\newblock {What is $\ldots$ a Cluster Algebra?}
\newblock {\em Notices of the AMS}, 54:1494--1495, 2007.

\bibitem[Zha12]{zhang}
S.~Zhang.
\newblock {Cluster Variables and Perfect Matchings of Subgraphs of the $dP_{3}$
  Lattice }.
\newblock 2012.
\newblock URL: \url{http://www.math.umn.edu/~reiner/REU/Zhang2012.pdf}.

\end{thebibliography}

\newcommand{\etalchar}[1]{$^{#1}$}

\end{document}